\documentclass[12pt]{amsproc}
\usepackage{fullpage}
\usepackage{latexsym}
\usepackage{amscd, amsfonts, eucal, mathrsfs, amsmath, amssymb, amsthm}
\input xy
\xyoption{all}

\usepackage{pdfsync}
\usepackage{baskervald}
\usepackage{comment}

\usepackage[OT2,T1]{fontenc}
\DeclareSymbolFont{cyrletters}{OT2}{wncyr}{m}{n}
\DeclareMathSymbol{\Sha}{\mathalpha}{cyrletters}{"58}

\newcommand{\field}[1]{\mathbf #1}

\newcommand{\mc}[1]{\mathcal #1}
\newcommand{\ms}[1]{\mathscr #1}
\newcommand{\widebar}[1]{\overline{#1}}


\newcommand{\R}{\field R}

\newcommand{\C}{\field C}

\newcommand{\Z}{\field Z}
\newcommand{\Q}{\field Q}
\newcommand{\N}{\field N}

\newcommand{\simto}{\stackrel{\sim}{\to}}

\newcommand{\send}{\ms E\!nd}

\newcommand{\lf}{\text{\rm lf}}
\newcommand{\tf}{\text{\rm tf}}
\newcommand{\stable}{\text{\rm s}}
\newcommand{\semistable}{\text{\rm ss}}

\DeclareMathOperator{\Spec}{Spec}

\DeclareMathOperator{\AHilb}{AHilb}
\DeclareMathOperator{\Inertia}{I}
\newcommand{\inertia}{\iota}
\DeclareMathOperator{\Site}{Site}
\newcommand{\simplespace}{merbe}
\DeclareMathOperator{\Can}{Can}
\DeclareMathOperator{\Contr}{Contr}
\DeclareMathOperator{\Mod}{Mod}
\DeclareMathOperator{\mSh}{\bf Sh}
\DeclareMathOperator{\cSh}{\widetilde{\mSh}}
\DeclareMathOperator{\Types}{\bf Level}
\DeclareMathOperator{\GrpSch}{\bf GrpSch}
\DeclareMathOperator{\SSp}{\bf Merbe}
\newcommand{\perf}{\text{\rm perf}}
\DeclareMathOperator{\Qcoh}{QCoh}

\renewcommand{\phi}{\varphi}

\renewcommand{\P}{\field P}

\newcommand{\A}{\field A}

\DeclareMathOperator{\Pic}{Pic}
\DeclareMathOperator{\Sh}{Sh}
\DeclareMathOperator{\Quot}{Quot}

\DeclareMathOperator{\D}{D}

\DeclareMathOperator{\pr}{pr}

\DeclareMathOperator{\GL}{GL}
\DeclareMathOperator{\SL}{SL}

\DeclareMathOperator{\Ext}{Ext}

\DeclareMathOperator{\PGL}{PGL}

\newcommand{\m}{\boldsymbol{\mu}}

\newcommand{\G}{\field G} 

\renewcommand{\H}{\operatorname{H}}

\newcommand{\Gal}{\operatorname{Gal}}

\DeclareMathOperator{\per}{per}
\DeclareMathOperator{\ind}{ind}

\DeclareMathOperator{\gr}{gr}


\DeclareMathOperator*{\tensor}{\otimes}

\DeclareMathOperator{\rk}{\operatorname{rk}}

\newcommand{\inj}{\hookrightarrow}

\newcommand{\id}{\operatorname{id}}

\DeclareMathOperator{\Isom}{\operatorname{Isom}}
\DeclareMathOperator{\Hom}{\operatorname{Hom}}

\DeclareMathOperator{\M}{\operatorname{M}}

\DeclareMathOperator{\Br}{\operatorname{Br}}

\DeclareMathOperator{\B}{\operatorname{B\!}}

\newcommand{\Sch}{\mathbf{Sch}}
\newcommand{\AffSch}{\mathbf{AffSch}}
\newcommand{\Set}{\mathbf{Set}}
\DeclareMathOperator{\Groupoid}{\operatorname{\bf Grpd}}

\DeclareMathOperator{\Jac}{Jac}

\newcommand{\eps}{\varepsilon}


\newcommand{\Chow}{\operatorname{CH}}

\newtheorem{lem}{Lemma}[section]

\renewcommand{\thelem}{\ifnum\value{subsubsection}>0{\thesubsubsection.\arabic{lem}}\else{\ifnum\value{subsection}>0{\thesubsection.\arabic{lem}}\else{\thesection.\arabic{lem}}\fi}\fi}

\newtheorem{thm}[lem]{Theorem}

\newtheorem{prop}[lem]{Proposition}

\newtheorem{cor}[lem]{Corollary}

\theoremstyle{definition}
\newtheorem{defn}[lem]{Definition}

\newtheorem{example}[lem]{Example}

\theoremstyle{remark}
\newtheorem{remark}[lem]{Remark}

\newtheorem{notation}[lem]{Notation}

\newtheorem{question}[lem]{Question}
\newtheorem{assumption}[lem]{Assumption}


\setcounter{tocdepth}{2}

\numberwithin{equation}{subsection}

\author{Max Lieblich}

\title{Moduli of sheaves: a modern primer}

\begin{document}
\maketitle

\tableofcontents

\section{Introduction}
\label{sec:intro}

This paper is focused on results coming from the moduli spaces of sheaves,
broadly understood, over the last 15 years or so. The subject has seen an
interesting convergence of mathematical physics, derived categories, number
theory, non-commutative algebra, and the theory of stacks. It has spun off
results that inform our understanding of the Hodge conjecture, the
Tate conjecture, local-to-global principles, the structural properties
of division
algebras, the $u$-invariants of quadratic forms over function fields, and the
geometry of moduli spaces of K3 surfaces, among other things. All of these
results turn out to be interconnected in fascinating ways. For
example, the earliest results on derived categories of elliptic
threefolds using ``non-fine moduli spaces of sheaves'' (as studied by
C\u{a}ld\u{a}raru \cite{MR1887894}) are very closely
related to subsequent attacks, nearly a decade later, on the Tate conjecture and
the geometry of the Ogus space of supersingular K3 surfaces, where the
non-fineness is turned around to spawn new surfaces from Brauer classes.

The most fascinating thing about these developments is that they rely
heavily on porting chunks of the classical GIT-driven theory of moduli
of sheaves into the domain of so-called ``twisted sheaves''. Thus, for
example, unirationality results for moduli spaces of vector bundles on
curves, originally due to Serre, interact with the Graber-Harris-Starr
theorem (and its positive characteristic refinement by Starr and de
Jong) to produce similar kinds of estimates for function fields of
surfaces over algebraically closed fields.  Similarly, irreducibility
results dating back to Gieseker, Li, and O'Grady in the classical
case, themselves inspired by the work of Donaldson, Mrowka and Taubes
in the analytic category, can be proven for twisted sheaves and
combined with finite-field methods and the aforementioned results over
algebraically closed fields to yield explicit bounds on the rank of
division algebras over function fields of surfaces over finite fields.

Spending enough time with these ideas, one is naturally led to the observation
that the central function of these moduli spaces is to create a flow of
information through correspondences created by the universal sheaf. (This is
laid most bare in the study of derived equivalences of twisted sheaves. Indeed,
in some sense the derived equivalences are the richest shadows of underlying
equivalences of various kinds of motives, something we will not discuss here at
all.) It then becomes apparent that the successes arising from the theory of
``twisted sheaves'' are primarily psychological: by viewing the moduli stack as
its own object (rather than something to be compressed into a scheme), one more
easily retains information that can be flowed back to the land of varieties.

This hints that a more uniform theory -- one that treats sheaves and twisted
sheaves as objects of an identical flavor -- may be useful for future study of
these objects. Among other things, we endeavor to describe such a theory here.

\subsection{The structure of this paper}
\label{sec:structure}

This paper has two primary purposes: (1) describing some background and recent
results, and (2) doing a thought experiment about the right way to set up a more
coherent theory that incorporates both sheaves and twisted sheaves as equals.
The first purpose is addressed in Part \ref{part:background} and the second is
attempted in Part \ref{part:thought-experiment}.

Part \ref{part:background} includes a brief tour of the classical theory of
sheaves in Section \ref{sec:classical}, including key examples and a few remarks
about stability, and some less classical examples in Section
\ref{sec:motivations} showing the usefulness of ``thinking in stacks'' from the
beginning. Part \ref{part:background} concludes with an impressionistic
catalogue of results proven using twisted sheaves in the last decade or so.

Part \ref{part:thought-experiment} attempts to build a theory where ambient
space and moduli space are treated symmetrically, and universal sheaves always
exist. After establishing some terminology in Section \ref{sec:terminology}, we
describe the resulting theory of moduli of sheaves in Section \ref{sec:basics},
and some brief case studies illustrating how it can be used in Section \ref{sec:case-studies}.

\subsection{Background assumed of the reader}
\label{sec:background}
I assume that the reader is familiar with schemes, algebraic spaces, and stacks. 
I also assume that she has seen the Picard scheme and the $\Quot$ scheme. While
it is almost certainly the case that a reader who knows about algebraic stacks
has also seen moduli spaces of sheaves to some extent, in the modern world there
are many students of stacks who have little or no exposure to the moduli theory
of sheaves, classical or modern. Thus, I will tell some of that story
from scratch here.

\part{Background}\label{part:background}

\section{A mild approach to the classical theory}
\label{sec:classical}

We start with a chunk of the classical theory, to give a reader the flavor of
the geometry of the moduli spaces of sheaves. In Section \ref{sec:basics}
we will give a more rigorous development of the theory in a more general
framework, so we content ourselves here to discuss the beautiful geometry
without worrying too much about the details.

\subsection{The $\Quot$ scheme}
\label{sec:quot}

The first moduli problem that many people encounter is a moduli problem of
diagrams of sheaves: the $\Quot$ functor. Fix a morphism $f:X\to S$ of schemes
that is locally of finite presentation and a quasi-coherent sheaf of finite
presentation $F$ on $X$.

\begin{defn}
  The \emph{quotient functor associated to $F$\/} is the functor
  $$\Quot_F:\Sch_S\to\Set$$
  that sends $T\to S$ to the set of isomorphism classes of quotients
  $$F_T\to Q,$$ with $Q$ a $T$-flat finitely presented quasi-coherent sheaf on $X_T$
  whose support is proper and of finite presentation over $T$. 
\end{defn}
An isomorphism of quotients $F\to Q_1$ and $F\to Q_2$ is an isomorphism $Q_1\to
Q_2$ commuting with the maps from $F$. One can check that if such an isomorphism
exists then it is unique.

\begin{example}
  The simplest example of $\Quot_F$ is the following: let $X\to S$ be
  $\Spec\Z\to\Spec\Z$ and let $F$ be $\ms O_{\Spec\Z}^{\oplus n}$. Then
  $\Quot_F$ is the disjoint union of all of the Grassmannians of an $n$-dimensional
  vector space (over $\Spec\Z$).
\end{example}

\begin{example}
  The other standard example (and the original motivation for studying the
  $\Quot$ functor) is the following: suppose $X\to S$ is separated
  and $F=\ms O_X$. In this case, $\Quot_F$ is the Hilbert scheme of $X$. Among
  many other incredibly complex components, it contains one component isomorphic
  to $X$ itself: one can look at those quotients $\ms O_{X_T}\to Q$ where $Q$ is
  the structure sheaf of a section $T\to X_T$ of the canonical projection
  $X_T\to T$. (Here we use the fact that $X\to S$ is separated to know that $Q$
  has proper support.)
\end{example}

In particular, we see that $\Quot_F$ can have arbitary geometry: many
components with arbitrary dimensions and Kodaira dimensions.

The fundamental theorem about $\Quot_F$ is that it is representable. (Of course,
the representing object depends heavily on the context; it may be an algebraic
space.)

\begin{thm}[Grothendieck]\label{T:groth-quot}
  If $X\to S$ is locally quasi-projective and locally of finite presentation
  over $S$, then $\Quot_F$ is representable by a scheme that is locally of
  finite presentation and locally quasi-projective over $S$.
\end{thm}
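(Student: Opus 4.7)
The plan is to follow Grothendieck's original strategy from FGA. I would begin with a series of reductions. Standard limit and localization arguments reduce to the case where $S$ is affine and noetherian. Using the proper support hypothesis together with an auxiliary compactification, I may further reduce to the case where $X\to S$ is projective with a fixed very ample $\ms O_X(1)$. Finally, by choosing a surjection $\ms O_X(-m)^{\oplus N}\surj F$ for some large $m$, I realize $\Quot_F$ as a closed subfunctor of $\Quot_{\ms O_X(-m)^{\oplus N}}$ (cut out by the vanishing of the composite on the kernel of the chosen surjection), so it suffices to treat the universal case $F=\ms O_X(-m)^{\oplus N}$.

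Next, I would stratify by Hilbert polynomial: for each $P\in\Q[t]$, let $\Quot_F^P$ denote the subfunctor of quotients whose geometric fibers have Hilbert polynomial $P$. Flatness over the base implies that this gives an open-and-closed decomposition $\Quot_F=\coprod_P\Quot_F^P$, and it suffices to represent each piece by a quasi-projective $S$-scheme.

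The heart of the proof, and what I expect to be the main technical obstacle, is a uniform boundedness/regularity statement: there exists $m_0$ depending only on $F$ and $P$ such that for all $m\geq m_0$ and every $[F_T\surj Q]\in\Quot_F^P(T)$, the higher direct images $R^i\pi_{T*}Q(m)$ vanish for $i>0$, the sheaf $\pi_{T*}Q(m)$ is locally free of rank $P(m)$ with formation commuting with arbitrary base change on $T$, and the evaluation map $\pi_T^*\pi_{T*}Q(m)\surj Q(m)$ is surjective. This is Mumford's uniform Castelnuovo--Mumford regularity bound; its proof proceeds by induction on $\dim X$ using generic hyperplane sections and the numerical data encoded in $P$, and it is the step that forces the argument out of purely formal considerations into genuine projective geometry.

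Granted this boundedness, fix such an $m$ and set $V=\H^0(X,F(m))$, a finite locally free $\ms O_S$-module. Applying $\pi_{T*}$ to the twisted quotient $F_T(m)\surj Q(m)$ sends a $T$-point of $\Quot_F^P$ to a rank-$P(m)$ locally free quotient of $V\otimes\ms O_T$, producing a morphism of functors $\Quot_F^P\to\operatorname{Gr}(V,P(m))$ into the relative Grassmannian, which is representable by a projective $S$-scheme. To finish I would identify the image: over the Grassmannian the tautological subbundle, pulled back via $\pi$ and untwisted by $\ms O(-m)$, gives a canonical candidate subsheaf of $F$, and the conditions that the resulting cokernel be $T$-flat with Hilbert polynomial $P$ (and reproduce the given data under the $H^0$ recipe) cut out a locally closed subscheme; the fixed Hilbert polynomial combined with standard semicontinuity arguments promote this locus to a closed subscheme. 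Quasi-projectivity of $\Quot_F^P$ then descends from the Pl\"ucker embedding of the Grassmannian, and reassembling over all $P$ yields the claimed scheme that is locally of finite presentation and locally quasi-projective over $S$.
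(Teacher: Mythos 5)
The paper does not prove this theorem at all: it is quoted as a classical result of Grothendieck and used as a black box, so there is no internal argument to compare yours against. Judged on its own terms, your proposal is the standard FGA-style proof (as in Grothendieck's original construction and its modern expositions by Mumford, Altman--Kleiman, and Nitsure), and the outline is essentially correct: reduce to $S$ affine noetherian and $X$ projective, reduce to $F=\ms O_X(-m)^{\oplus N}$ via a closed immersion of functors, decompose by Hilbert polynomial, invoke uniform Castelnuovo--Mumford regularity to linearize the problem, and embed $\Quot_F^P$ into a relative Grassmannian, with the image cut out by a flattening-stratification argument. Two points deserve more care than your sketch gives them. First, the reduction from the stated generality (``locally quasi-projective, locally of finite presentation,'' quotients only required to have proper support) to the projective case is not a single compactification: one covers $\Quot_F$ by open subfunctors indexed by quasi-compact opens $V\subset S$ and $U\subset X_V$ (using that a quotient with proper support contained in $U$ extends canonically to a quotient of $F$ on $X_V$), extends $F|_U$ to a coherent sheaf on a projective compactification $\widebar U$ over $V$, and identifies the piece of $\Quot$ in question with an open subfunctor of the projective $\Quot$ of the extension; this is exactly where ``locally quasi-projective'' rather than ``quasi-projective'' appears in the conclusion. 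Second, your final step overreaches slightly: semicontinuity does not make the flatness-with-Hilbert-polynomial-$P$ locus closed in the Grassmannian; the flattening stratification gives a locally closed subscheme, which is all you need for the stated local quasi-projectivity (closedness, i.e.\ properness of $\Quot^P$ in the projective case, requires a separate valuative-criterion argument with flat limits over a dvr, and in the non-proper setting of the theorem it is false anyway). With those adjustments the argument is the correct and complete classical one.
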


Much of the classical theory of moduli spaces of sheaves is built by
bootstrapping from Theorem \ref{T:groth-quot}, as we will indicate below. The
general idea is to rigidify a problem by adding some kind of additional
structure (e.g., a basis for the space of sections of a large twist),
and then realize the original as a quotient of a locally closed subscheme of
some $\Quot$ scheme.

One question that is naturally raised by the $\Quot$ functor, and more
particularly by the Hilbert scheme, is the following.

\begin{question}
  What happens if we try to make the Hilbert scheme without the quotient
  structure? I.e., why not take moduli of sheaves $L$ on $X$ that are
  invertible sheaves supported on closed subschemes of $X$? How does this moduli
  problem compare to the Hilbert scheme?
\end{question}

\subsection{The Picard scheme}
\label{sec:picard}

The next moduli space of sheaves that we learn about is the Picard scheme. Fix a
proper morphism $f:X\to S$ of finite presentation between schemes.

\begin{defn}
  The \emph{naive Picard functor of $X/S$\/} is the functor
  $$\Pic_{X/S}^{\text{\rm naive}}:\Sch_S\to\Set$$
  sending $T\to S$ to $\Pic(X_T)$.
\end{defn}

This can't possibly be represented by a scheme, which one can see by considering
the identity map $S\to S$. The functor $T\mapsto\Pic(T)$ has the property that
for all $T$ and all $f\in\Pic(T)$ there is a Zariski covering $U\to T$ such that
$f$ maps to $0$ in $\Pic(U)$. That is, the functor defined is not a sheaf (it is
not even a separated presheaf in the Zariski topology) and thus cannot be a scheme.

The traditional way to fix this is to study the sheafification of
$\Pic^{\text{\rm naive}}_{X/S}$. Once one has studied Grothendieck
topologies, one recognizes that one is in fact studying the higher direct image 
$\R^1f_\ast\G_m$ in the fppf topology. This functor is usually written $\Pic_{X/S}$.

\begin{thm}[Grothendieck]
  If $X\to S$ is a proper morphism of schemes of finite presentation that is
  locally projective on $S$ and cohomologically flat in dimension $0$
  then $\Pic_{X/S}$ is representable by a scheme locally of finite presentation over
  $S$.
\end{thm}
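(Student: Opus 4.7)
The plan is to construct $\Pic_{X/S}$ by bootstrapping from Grothendieck's theorem on $\Quot$ (Theorem \ref{T:groth-quot}) via the Hilbert scheme, using the correspondence between line bundles and effective divisors supplied by global sections. Since $\Pic_{X/S}$ is by definition an fppf sheaf, it suffices to exhibit a representing scheme after a faithfully flat base change and then descend. I would therefore work \'etale-locally on $S$ so as to have a relatively very ample $\ms O_X(1)$ at hand, and after a further fppf cover, a section $\sigma:S\to X$ of $f$ available to kill the automorphisms of line bundles.

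The first step is to use cohomological flatness in dimension $0$ together with Serre vanishing to control line bundles in families. For any line bundle $L$ on $X_T$, the twist $L(n)$ becomes globally generated with vanishing higher cohomology for $n\gg 0$, and $(f_T)_{*}L(n)$ is locally free on $T$ of rank a Hilbert polynomial $P(n)$. This stratifies the problem by Hilbert polynomial and lets me focus on one stratum at a time.

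Next, I would realize each stratum as a quotient of an open subscheme of the Hilbert scheme. A section $s\in H^0(X_T,L(n))$ whose zero locus is an effective Cartier divisor flat over $T$ cuts out $D=Z(s)$ with $\ms O_{X_T}(D)\simeq L(n)$. This yields a natural transformation from an open subfunctor of $\Hilb^{P}_{X/S}$ to the corresponding stratum of $\Pic_{X/S}$, and by the preceding paragraph it is surjective fppf-locally. Rigidifying line bundles by a trivialization along $\sigma$ kills the $\G_m$ of automorphisms; the fiber of the rigidified map over a line bundle $L$ is then the ordinary projective space $\P((f_T)_{*}L(n))$, giving a genuine projective bundle structure. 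Quotienting the open in $\Hilb^{P}_{X/S}$ by this bundle realizes the rigidified Picard stratum as a scheme over the cover, and Grothendieck's fppf descent theorem -- applied with the canonical ample bundles arising from the $\Quot$-scheme construction -- produces $\Pic_{X/S}$ as a scheme over $S$.

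The main obstacle is the $\G_m$-gerbe phenomenon: line bundles always carry the automorphism group $\G_m$, so the naive moduli problem is genuinely a stack, and $\Pic_{X/S}$ is only its associated sheaf. Cohomological flatness in dimension $0$ is precisely the hypothesis ensuring that this gerbe trivializes fppf-locally on $S$ (via sections of $f$), and the most delicate task is verifying that, once the rigidification is undone, the descended object is a scheme and not merely an algebraic space. The local projectivity of $f$ supplies the ample bundles that make this last step go through, which is why the full statement requires that hypothesis.
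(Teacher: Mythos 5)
The paper does not actually prove this theorem; it cites Chapter 8 of \cite{MR1045822}, so the comparison is with the classical argument carried out there. Your outline is indeed that classical strategy (relative effective divisors inside $\Hilb_{X/S}$, the Abel map, stratification by Hilbert polynomial and twist, projective-space fibers, reduction to the case where $f$ has a section), but the two places where you are briefest are exactly where the entire content of Grothendieck's theorem lives, and as written they are gaps rather than steps. First, ``quotienting the open in $\Hilb^P_{X/S}$ by this bundle'' is asserted in one sentence, but the existence of that quotient \emph{as a scheme} is the heart of the proof: a quotient of a quasi-projective scheme by a smooth proper equivalence relation whose classes are projective spaces is in general only an algebraic space (indeed, under weaker hypotheses $\Pic_{X/S}$ genuinely fails to be a scheme, which is why Artin's theorem gives only an algebraic space). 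The classical proof has to construct the quotient by hand, exhibiting the Abel map over each stratum as a projectivized coherent sheaf over a base that is built by descent and shown to be quasi-projective; nothing in your sketch explains why your quotient is a scheme rather than an algebraic space, which is precisely the dichotomy the theorem is about.

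Second, your opening reduction -- ``it suffices to exhibit a representing scheme after a faithfully flat base change and then descend'' -- is not a valid general principle: representability is not fppf-local on the base, because descent data on schemes need not be effective. Effectivity here has to be bought with a descent datum on a relatively ample invertible sheaf on the representing object over the cover, and the ample sheaf you point to (``the canonical ample bundles arising from the $\Quot$-scheme construction'') lives on the Hilbert scheme, not on its quotient; it does not pass to the quotient for free, so the object you need to descend has no ample sheaf until you construct one canonically on each quasi-projective piece $\Pic^{\phi}$. This is a genuine piece of work in the classical argument, not a footnote. A smaller imprecision: cohomological flatness in dimension $0$ is not what trivializes the $\G_m$-gerbe fppf-locally (every gerbe is fppf-locally trivial); its role is to make $f_{T*}\ms O_{X_T}$ commute with base change, which is what controls the automorphisms of line bundles in families, makes the rigidified functor along $\sigma$ compare correctly with the fppf sheaf $\Pic_{X/S}$, and keeps the fibers of the Abel map projective spaces of the expected, locally constant dimension. (You also quietly use that $f$ is faithfully flat in order to find $\sigma$ after an fppf cover; that should be made explicit.)
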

This is beautifully described in Chapter 8 of \cite{MR1045822}. (The
definition of ``cohomologically flat in dimension 0'' is on page 206,
in the paragraph preceding Theorem 7 of that chapter.)

\begin{example}
  Everyone's first example is the Picard scheme of $\P^1$. By cohomology and
  base change, for any scheme $T$ we have a canonical isomorphism
  $$\Pic(\P^1\times T)\simto\Pic(\P^1)\times\Pic(T)=\Z\times\Pic(T).$$
  Since the second factor is annihilated upon sheafification, we see that
  $$\Pic_{\P^1/\Spec\Z}\cong\underline{\Z}_{\Spec\Z},$$
  the constant group scheme with fiber $\Z$.
\end{example}

In Section \ref{sec:motivating-example} we will elaborate on this example for
non-split conics.

\subsection{Sheaves on a curve}
\label{sec:curve}

General locally free sheaves on a curve turn out to have nice moduli. Fix a
smooth proper curve $C$ over an algebraically closed field $k$, and fix an
invertible sheaf $L$ on $C$.

\begin{defn}
  The \emph{stack of locally free sheaves on $C$ of rank $n$ and determinant
$L$\/}, denoted $\mSh_{C/k}^\lf(n,L)$, is the stack whose objects over a
$k$-scheme $T$ are pairs $(F,\phi)$ where $F$ is a locally free sheaf on
$C\times T$ of rank $n$ and $\phi:\det F\simto L_T$ is an identification of the
determinant of $F$ with $L$.
\end{defn}

The primary geometric result about $\mSh_{C/k}^\lf(n, L)$ is the following.

\begin{prop}\label{P:unirational-curve-classical}
  The stack $\mSh^\lf_{C/k}(n,L)$ is an integral algebraic stack that is an ascending
  union of open substacks $\ms U_N$, each of which admits a surjection from an
  affine space $\A^{n(N)}$. That is, $\mSh^\lf_{C/k}(n,L)$ is
  an ascending union of unirational open substacks.
\end{prop}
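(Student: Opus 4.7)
The plan is to reproduce Serre's classical argument: show that every rank $n$ vector bundle on $C$, after a large twist, can be written as an extension of a line bundle by a trivial bundle, and parametrize such extensions by an affine space.

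For algebraicity, note that $\mSh^\lf_{C/k}(n,L)$ sits as a locally closed substack of the stack of coherent sheaves on $C$: being locally free of rank $n$ is an open condition, and fixing the determinant is the fiber over $[L]$ of the determinant morphism to $\Pic_{C/k}$. Algebraicity of the ambient coherent stack is standard, built by bootstrapping from Theorem~\ref{T:groth-quot} and the quotient of a suitable open in a $\Quot$ scheme by a $\GL$-action.

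For each $N$, let $\ms U_N \subset \mSh^\lf_{C/k}(n,L)$ be the open substack whose $T$-points are bundles $F$ on $C \times T$ such that $F(N) := F \otimes \ms O_C(N)$ admits, \'etale-locally on $T$, an injection $\ms O_{C\times T}^{\oplus n-1} \hookrightarrow F(N)$ with locally free quotient (openness is by semicontinuity and cohomology-and-base-change). Since any fixed bundle has $F(N)$ globally generated with vanishing $H^1$ for $N \gg 0$, $n-1$ generically independent sections of $F(N)$ produce such an extension, so $\mSh^\lf_{C/k}(n,L) = \bigcup_N \ms U_N$. Because $\det F(N) = L(nN)$, the quotient is forced to be the fixed line bundle $M_N := L(nN)$. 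Conversely, any extension class
\[
\xi \in \Ext^1(M_N, \ms O_C^{\oplus n-1}) = H^1(C, M_N^{-1})^{\oplus (n-1)}
\]
yields a rank $n$ locally free sheaf $E_\xi$ with $\det E_\xi = M_N$, and $F_\xi := E_\xi(-N)$ then satisfies $\det F_\xi = L$ and lies in $\ms U_N$. The $\Ext$ group is a finite-dimensional $k$-vector space, an affine space $\A^{n(N)}$, and $\xi \mapsto F_\xi$ defines a morphism $\A^{n(N)} \to \ms U_N$ that is surjective on geometric points. Irreducibility of $\ms U_N$ is then immediate; since the $\ms U_N$ are an ascending chain of nonempty opens in one algebraic stack, the whole stack is irreducible, and reducedness (hence integrality) follows from the standard smoothness of the moduli of vector bundles on a smooth curve — the obstruction space $\Ext^2(F,F)$ vanishes for dimension reasons.

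The main technical point is the surjectivity of $\A^{n(N)} \to \ms U_N$: the affine space naturally parametrizes \emph{extensions} (i.e., pairs consisting of a bundle $E$ together with an embedding $\ms O^{\oplus n-1} \hookrightarrow E$), so the morphism has positive-dimensional fibers corresponding to the auxiliary choice of $n-1$ sections. One must verify that every geometric point of $\ms U_N$ admits such a choice, but this is exactly how $\ms U_N$ was defined. Everything else is bookkeeping — tracking determinants, semicontinuity, and the fact that extensions of locally free sheaves on a smooth curve remain locally free.
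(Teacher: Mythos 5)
Your construction is the same as the paper's: Serre's trick, realizing every bundle after a large twist as an extension of the fixed line bundle $L(nN)$ by $\ms O^{\oplus n-1}$, with the affine space $\Ext^1(L(nN),\ms O^{\oplus n-1})$ as the unirational cover; handling algebraicity through the $\Quot$-scheme presentation of the stack of coherent sheaves is a reasonable alternative to the paper's route, which instead observes that the extension maps $\eps_m$ are themselves smooth over dense opens and so already furnish a smooth cover. However, two steps are asserted where the paper supplies (or would require) an actual argument, and as written they are exactly where your proof would break. First, ``$n-1$ generically independent sections of $F(N)$ produce such an extension'' is false as stated: generic independence only gives a map $\ms O^{\oplus n-1}\to F(N)$ that is injective as a sheaf map, and its cokernel can have torsion at the finitely many points where the map drops rank, in which case the quotient is not invertible and $F(N)$ is not an extension of $L(nN)$ by $\ms O^{\oplus n-1}$. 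What is needed is that a \emph{general} map has full rank at every point of $C$, and that is precisely the Bertini-type codimension-$\geq 2$ degeneracy-locus count of Lemma \ref{L:bertini-sketch}, which uses global generation of $F(N)$ in an essential way; this lemma is the real content behind the exhaustion $\bigcup_N\ms U_N=\mSh^\lf_{C/k}(n,L)$ and cannot be waved through.

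Second, openness of $\ms U_N$ as you define it does not follow from ``semicontinuity and cohomology-and-base-change.'' Your locus is, on geometric points, the image of the finite-type morphism $\A^{n(N)}\to\mSh^\lf_{C/k}(n,L)$, and images of such morphisms are in general only constructible; the condition that semicontinuity and base change actually make open is ``$\H^1(C_t,F_t(N))=0$ and $F_t(N)$ globally generated,'' not ``there exists an injection $\ms O^{\oplus n-1}\hookrightarrow F_t(N)$ with invertible quotient.'' The clean repair, implicit in the paper, is to take $\ms U_N$ to be the smaller locus of $F$ with $F(N)$ globally generated and $\H^1(C,F_t(N))=0$: this is open by cohomology and base change, exhausts the stack as $N$ grows, and lies in the image of the extension space by Lemma \ref{L:bertini-sketch} (so one surjects onto it from an open subset of $\A^{n(N)}$, which still gives unirationality); alternatively one must prove directly that your image is open, which your one-line justification does not do. Relatedly, to have $\xi\mapsto F_\xi$ define a morphism of stacks you need the functor $T\mapsto\Ext^1_{C\times T}(\pr_1^\ast L(nN),\ms O^{\oplus n-1})$ to be representable by an affine space, which again is a cohomology-and-base-change point valid only for $N$ large. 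The remaining ingredients --- irreducibility from the nested chain of dominated opens, and reducedness via vanishing of the obstruction space on a curve --- are fine and agree with the paper.
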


There are several ways to prove Proposition \ref{P:unirational-curve-classical}. Here is one of
them. First, let $F$ be a locally free sheaf of rank $n$ and determinant $L$.

\begin{lem}\label{L:bertini-sketch}
  For sufficiently large $m$, a general map $\ms O^{n-1}\to F(m)$ has invertible
  cokernel isomorphic to $\det(F(m))\cong\det(F)(nm)$.
\end{lem}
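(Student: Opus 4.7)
The plan is a Bertini-style dimension count on the affine space of maps $\ms O^{n-1} \to F(m)$. First, by Serre vanishing on the curve $C$, choose $m \gg 0$ so that $F(m)$ is globally generated; that is, for every $p \in C$ the evaluation map $\H^0(C, F(m)) \to F(m)_p$ is surjective. Then the $k$-vector space $V := \H^0(C, F(m))^{\oplus(n-1)}$ parametrizes all maps $\ms O^{n-1} \to F(m)$, and we seek a dense open $U \subseteq V$ such that every $\phi \in U$ has locally free cokernel of rank $1$.

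The key observation is that $\phi \in V$ has locally free cokernel of rank $1$ exactly when the induced fiber map $k^{n-1} \to F(m)_p$ is injective at every $p \in C$, i.e., when $(s_1(p), \ldots, s_{n-1}(p))$ is linearly independent inside the $n$-dimensional vector space $F(m)_p$. The subset of linearly dependent $(n-1)$-tuples in $F(m)_p^{\oplus(n-1)}$ is the classical determinantal variety of $(n-1) \times n$ matrices of non-maximal rank, which has codimension $2$; the surjectivity coming from global generation makes its preimage $B_p \subset V$ a closed subvariety of codimension exactly $2$.

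Assembling these over $C$, let $\ms B \subset V \times C$ be the incidence variety of pairs $(\phi, p)$ for which $\phi$ fails to be a fiberwise injection at $p$. Its projection to $C$ has fibers $B_p$ of constant codimension $2$, so $\dim \ms B = \dim V - 2 + 1 = \dim V - 1$; hence $\ms B \to V$ is not dominant, and a general $\phi \in V$ gives a map $\ms O^{n-1} \to F(m)$ that is fiberwise injective everywhere. Its image is then a subbundle of rank $n-1$, so the cokernel $Q$ is a line bundle, and taking determinants of the short exact sequence $0 \to \ms O^{n-1} \to F(m) \to Q \to 0$ identifies $Q \cong \det F(m) \cong (\det F)(nm) = L(nm)$.

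There is no especially hard step. The numerical input that makes the argument work on a curve is the inequality $2 > \dim C = 1$: the bad locus is codimension $2$ pointwise, which still beats the dimension of $C$ when swept out into the incidence variety. I expect the only point worth checking carefully to be that global generation really does produce a surjection $V \to F(m)_p^{\oplus(n-1)}$ at every $p$, so that the pullback of the codimension-$2$ determinantal locus has the expected codimension in $V$; this is where the choice of a sufficiently large $m$ enters.
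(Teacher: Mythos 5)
Your argument is correct and is essentially the paper's own: both choose $m$ large enough for pointwise global generation, form the incidence locus in (space of maps)$\times C$ where the fiber map drops rank, use the codimension-$2$ bound for the determinantal locus of $(n-1)\times n$ matrices to see this locus cannot dominate the space of maps, and identify the cokernel by taking determinants of the resulting short exact sequence. The only difference is cosmetic: you quote the classical codimension formula for the determinantal variety, while the paper includes a short ad hoc dimension count for it.
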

\begin{proof}[Sketch of proof]
  This is a Bertini-type argument. Choose $m$ large enough that for every point
  $c$ of $C$ the restriction map $$\Gamma(C,F(m))\to F(m)\tensor\kappa(c)$$ is
  surjective. Let $A$ be the affine space underlying $\Hom(\ms O^{n-1}, F(m))$,
  and let $$\Phi:\ms O^{n-1}\to\pr_2^\ast F(m)$$ on $A\times C$ be the universal
  map. There is a well-defined closed locus $Z\subset A\times C$ parametrizing points where
  the fiber of $\Phi$ does not have full rank. As with the Bertini theorem, we
  want to show that the codimension of $Z$ is at least $2$, as then it cannot
  dominate $A$, giving the desired map.

  To show it has codimension at least $2$, it suffices to show this in each
  fiber over a point $c\in C$. By assumption, the universal map specializes to
  all maps $\kappa(c)^{n-1}\to F(m)\tensor\kappa(c)$, so it suffices to show
  that the locus of maps in the linear space $\Hom(k^{n-1}, k^n)$ that have
  non-maximal rank is of codimension at least $2$. One way to do this is to show
  that the determinantal variety cut out by the $(n-1)\times(n-1)$-minors of the
  $(n-1)\times n$-matrices over $k$ is not a divisor (i.e., there are
  non-redundant relations coming from the determinants). Another quick and dirty
  way to see the desired inequality in this case is to note that any map of
  non-maximal rank must factor through the quotient by a line. The space of
  quotients has dimension $n-2$, while the maps from the quotient vector space
  have 
  dimension $n(n-2)$. Thus, the space of non-injective maps has dimension at
  most $(n+1)(n-2)=n^2-n-2$, but the ambient space has dimension $n(n-1)=n^2-n$.
  The codimension is thus at least $2$, as desired.

  The value of the cokernel arises from a computation of the determinant of the
  exact sequence of locally free sheaves resulting from a general map $\ms
  O^{n-1}\to F(m)$.
\end{proof}

\begin{proof}[Proof of Proposition \ref{P:unirational-curve-classical}]
  We first address the latter part of the Proposition, deferring algebraicity of
  the stack for a moment.
  
  Given $m$, define a functor $$e_m:\AffSch_k\to\Set$$ on the category of affine
  $k$-schemes that sends $T$ to
  $$\Ext_{C\times T}^1(\pr_1^\ast L(mn),\ms O^{n-1}).$$
  By cohomology and base change, for all sufficiently large $m$, the functor
  $e_m$ is representable by an affine space $\A^N$. The identity map defines a
  universal extension
  $$0\to \ms O^{n-1}\to \ms F\to\pr_1^\ast L(mn)\to 0$$
  over $C\times\A^N$. By taking determinants, this comes with an isomorphism
  $$\phi:\det(\ms F)\simto \pr_1^\ast L(mn).$$ Sending this universal extension
  to $(\ms F(-m), \phi(-mn))$ defines a
  morphism $$\eps_m:\A^N\to\mSh_{C/k}^\lf(n,L).$$
  By Lemma \ref{L:bertini-sketch},
  these maps surject onto $\mSh_{C/k}^\lf(n,L)$ (as $m$ ranges over any given
  unbounded collection of integers $m$ for which $e_m$ is representable).

 Let us briefly address algebraicity of the stack. First, we note that for large
 enough $m$, the morphisms $\eps_m$ are smooth over a dense open subset of $\A^N$. This is equivalent to
 the statement that the sections of $F(m)$ lift under any infinitesimal
 deformation (for $F$ in the image), and this follows precisely from the vanishing of $\H^1(C,F(m))$,
 which happens over a dense open of $\A^N$ for sufficiently large $m$. Moreover,
 for any $F$, there is some $m$ such that the smooth locus of $\eps_m$ contains
 $F$. The
 diagonal of $\mSh^{\lf}_{C/k}(n,L)$ is given by the $\Isom$ functor, which we
 know to be representable by Grothendieck's work on cohomology and base change.
 We conclude that we have defined a representable smooth cover of
 $\mSh^\lf_{C/k}(n,L)$, showing that it is algebraic. 

 It is also possible to prove algbraicity using Artin's theorem and studying the
 deformation theory of sheaves, which is particularly simple for locally free
 sheaves on a curve. In fact, there are never any obstructions, showing that the
 stack $\mSh^\lf_{C/k}(n,L)$ is itself smooth.
\end{proof}

We have studied a completely general moduli problem, but the reader will usually
encounter an open substack of $\mSh^\lf_{C/k}(n,L)$ in the literature: the stack
of \emph{semistable\/} sheaves. What's more, the classical literature usually
addresses the Geometric Invariant Theory (GIT) quotient of this stack.
For the sake of quasi-completeness, we give the definitions
and make a few remarks. We will not discuss GIT here at all. The reader is
referred to \cite{MR2665168} (or the original \cite{MR1304906}) for further information.

\begin{defn}
  A locally free sheaf $V$ on a curve $C$ is \emph{stable\/} (respectively
  \emph{semistable\/}) if for all nonzero subsheaves $W\subset V$ we have the inequality
  $$\frac{\deg(W)}{\rk(W)} < \frac{\deg(V)}{\rk(V)}$$
  (respectively,
  $$\frac{\deg(W)}{\rk(W)} \leq \frac{\deg(V)}{\rk(V)}.)$$
\end{defn}

The quantity $\deg(F)/\rk(F)$ is usually called the \emph{slope\/}, written $\mu(F)$.

After Mumford revived the theory of stable bundles,
this was studied by Narasimhan and Seshadri \cite{MR0184252}, who were
interested in unitary bundles over a curve. It turns out that stable sheaves of degree $0$ and
rank $n$ over a curve correspond to irreducible unitary representations of the
fundamental group of $C$; this was the original target of study for Narasimhan
and Seshadri.

Stable and semistable sheaves have a number of nice categorical properties.
Stable sheaves are simple (that is, a map $F\to G$ is either an isomorphism or
$0$). Semistable sheaves $H$ admit Jordan-H\"older filtrations
$$H=H^0\supset H^1\supset\cdots\supset H^n=0$$ such that $H^i/H^{i+1}$ is stable
with the same slope as $H$, and the associated graded $\gr(H^\bullet)$ is
uniquely determined by $H$ (even though the filtration itself is not). We will
write this common graded sheaf as $\gr(H)$ (omitting the filtration from the
notation). This is all nicely summarized in \cite{MR2665168}.

The basic geometric result concerning stability is the following. We can form a
stack of stable sheaves $\mSh_{C/k}^\stable(n,L)$ (resp.\ semistable sheaves
$\mSh_{C/k}^\semistable(n,L)$) inside $\mSh_{C/k}^\lf(n,L)$ by requiring that the
geometric fibers in the family are stable (resp.\ semistable).

\begin{prop}
  The inclusions
  $$\mSh^\stable_{C/k}(n,L)\to\mSh^\semistable_{C/k}(n,L)\to\mSh^\lf_{C/k}(n,L)$$
  are open immersions.
\end{prop}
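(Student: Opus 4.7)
The plan is to reduce to the following concrete statement: for any flat family $F$ of locally free sheaves of rank $n$ and determinant $L_T$ on $C\times T$, the locus $T^{\semistable}\subset T$ (respectively $T^{\stable}\subset T$) of points whose geometric fiber is semistable (respectively stable) is open in $T$. Since openness of the stack inclusions can be tested on a smooth atlas of $\mSh^{\lf}_{C/k}(n,L)$, this suffices.

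Flatness of $F$ combined with the fixed rank and determinant forces the slope $\mu=\deg(L)/n$ to be constant on fibers. A fiber $F_t$ fails semistability (respectively stability) exactly when there is a nonzero proper subsheaf $W\subsetneq F_t$ with $\mu(W)>\mu$ (respectively $\mu(W)\geq\mu$). Replacing $W$ by its saturation only increases the slope, so I may assume the quotient $Q=F_t/W$ is locally free; then the condition is the existence of a locally free quotient $F_t\twoheadrightarrow Q$ of rank in $\{1,\dots,n-1\}$ with $\mu(Q)<\mu$ (respectively $\mu(Q)\leq\mu$). The substantive input is boundedness: for a locally free sheaf $V$ of rank $n$ on a smooth proper curve, the degrees of saturated rank-$r$ subsheaves of $V$ are uniformly bounded above for each $1\leq r\leq n-1$. (One argument: $\det W\hookrightarrow\Lambda^r V$ realizes $\det W$ as a line subsheaf of $\Lambda^r V$, and on a fixed vector bundle on a smooth proper curve the degrees of line subsheaves admit a uniform upper bound, e.g.\ via the maximal Harder--Narasimhan slope.) Propagating this bound across the flat family $F/T$ yields a finite set of Hilbert polynomials $\{P_1,\dots,P_N\}$ that exhausts the possible Hilbert polynomials of saturated destabilizing quotients on every geometric fiber.

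For each $P_i$, Theorem \ref{T:groth-quot} applied to $C\times T\to T$ gives a relative Quot scheme $\Quot^{P_i}_{F/C\times T/T}$; properness of $C$ forces proper support and fixing the Hilbert polynomial pins the scheme down to a proper $T$-scheme. Its image in $T$ is therefore closed. The non-semistable locus is the union of these images for the $P_i$ satisfying $\mu(Q)<\mu$, and the non-stable locus is the union for those with $\mu(Q)\leq\mu$. Complementing yields openness of $T^{\semistable}$ and $T^{\stable}\subset T^{\semistable}\subset T$, and the proposition follows. The main obstacle is the boundedness step: one needs the finite list of Hilbert polynomials to work uniformly across the family, which is where the classical geometric content of the curve hypothesis enters; the surrounding argument is then just formal manipulation of the relative Quot scheme.
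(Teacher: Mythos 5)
Your argument is correct, but it is a genuinely different route from the one taken in the paper. The paper's proof is a valuative-criterion argument: since everything is locally of finite presentation the images are constructible, so it suffices to check stability of the (semi)stable locus under generization; over a complete dvr one extends a destabilizing subsheaf $W^\circ\subset V_\eta$ of the generic fiber to a subsheaf $W\subset V$ with $R$-flat quotient, observes that $W$ is then locally free with constant fiberwise degree, and concludes that the special fiber is destabilized as well. You instead prove directly that the non-(semi)stable locus in the base of a family is closed, by bounding the numerical invariants of saturated destabilizing quotients and exhibiting that locus as a finite union of images of proper relative $\Quot$ schemes. Both are standard and complete; yours is essentially the textbook argument (Huybrechts--Lehn) and has the advantage of generalizing verbatim to higher-dimensional fibers and Gieseker stability, while the paper's dvr argument is shorter and needs no boundedness input at all. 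Two small points you should make explicit: the uniform bound on $\mu_{\max}$ of the fibers requires the base to be quasi-compact (harmless, since openness is local on $T$, but it is where a finiteness reduction enters); and the properness of $\Quot^{P_i}$ over $T$ is strictly more than what Theorem \ref{T:groth-quot} as quoted asserts (local quasi-projectivity) --- you need the classical projectivity of the fixed-Hilbert-polynomial $\Quot$ for a projective family, whose own proof via the valuative criterion uses exactly the flat-extension-over-a-dvr step that the paper's argument invokes directly.
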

\begin{proof}[Idea of proof]
  Everything in sight is locally of finite presentation, and the arrows in the
  diagram are of finite presentation, hence have constructible images. It thus
  suffices to show that the images are stable under generization. In other
  words, we may restrict ourselves to families of sheaves on $C\tensor R$ where
  $R$ is a complete dvr. Given a locally free sheaf $V$ on $C\tensor R$ and a
  subsheaf $W^\circ\subset V_\eta$ of the generic fiber such that
  $\mu(W^\circ)\geq\mu(V_\eta)$, then we can extend $W^\circ$ to a subsheaf
  $W\subset V$ such that $V/W$ is flat over $R$. This means that the sheaf $W$
  is locally free on $C\tensor R$. But any locally free sheaf has constant
  degree in fibers, so the specialization $W_0$ also satsifies
  $\mu(W_0)\geq\mu(V_0).$ This shows the desired stability under generization
  (by showing the complement is closed under specialization).
\end{proof}

One of the early successes of GIT was a description
of a scheme-theoretic avatar of $\mSh^\semistable_{C/k}(n,L)$.

\begin{thm}[Mumford, modern interpretation]\label{sec:sheaves-curve}
  There is a diagram
  $$\xymatrix{\mSh^\stable_{C/k}(n,L)\ar[r]\ar[d]_\pi &
    \mSh^\semistable_{C/k}(n,L)\ar[d]^{\pi'}\\
    \cSh^\stable_{C/k}(n,L)\ar[r] & \cSh^\semistable_{C/k}(n,L),}$$
  in which
  \begin{enumerate}
  \item $\cSh^\semistable_{C/k}(n,L)$ is a locally factorial integral
    projective scheme;
  \item $\cSh^\stable_{C/k}(n,L)$ is smooth;
    \item $\cSh^\stable_{C/k}(n,L)\to \cSh^\semistable_{C/k}(n,L)$ is an open
      immersion;
      \item $\pi$ together with the scalar multiplication structure realize
  $\mSh^\stable_{C/k}(n,L)$ as a $\m_n$-gerbe over $\cSh^\stable_{C/k}(n,L)$.
  \end{enumerate}
  Moreover,
  \begin{enumerate}
  \item $\pi'$ universal among all morphisms from
    $\mSh^\semistable_{C/k}(n,L)$ to schemes;
  \item $\pi$ establishes a bijection between
    the geometric points of $S$ and isomorphism classes
    of geometric points of $\mSh^\stable_{C/k}(n,L)$;
  \item given an algebraically closed field $\kappa$ and two
    sheaves $H,H'\in\mSh^\semistable_{C/k}(n,L)(\kappa)$,
    we have that $\pi(H)=\pi(H')$ if and only if $\gr(H)\cong\gr(H')$.
  \end{enumerate}
\end{thm}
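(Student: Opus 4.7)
The plan is to prove this via the classical Geometric Invariant Theory construction, presenting $\mSh^\semistable_{C/k}(n,L)$ as a quotient stack and then forming its good GIT quotient.

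First, by boundedness of semistable sheaves of fixed rank and determinant (proved using the Harder--Narasimhan stratification or a Grauert--M\"ulich-type argument), I would fix $m \gg 0$ so that for every semistable $F$ of rank $n$ and determinant $L$, the sheaf $F(m)$ is globally generated, $\H^1(C, F(m)) = 0$, and $h^0(F(m)) = N$ for a fixed integer $N$ determined by Riemann--Roch. Each such $F$ then appears as a quotient $\ms O_C^N \twoheadrightarrow F(m)$, determined up to the $\GL_N$-action on the source. Selecting those quotients whose target is semistable with determinant $L$ cuts out a locally closed $\SL_N$-invariant subscheme $R \subset \Quot^P_{\ms O_C^N/C/k}$, and standard descent gives $\mSh^\semistable_{C/k}(n,L) \cong [R/\SL_N]$, with the central $\m_n \subset \SL_N$ producing the gerbe structure visible in the theorem.

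The crux of the argument -- and the main obstacle -- is GIT itself. Via Grothendieck's construction, $R$ embeds into a Grassmannian with its Pl\"ucker linearization, giving a natural $\SL_N$-linearized ample bundle on $R$. One must show that, for $m$ sufficiently large, the GIT-(semi)stable points of $R$ are exactly those whose quotient sheaf is slope-(semi)stable, and that two GIT-semistable points lie in the same closed orbit if and only if the associated graded objects of their Jordan--H\"older filtrations agree. Following Seshadri and Gieseker, I would verify this by expressing a one-parameter subgroup of $\SL_N$ as a weighted filtration $0 = V_0 \subsetneq V_1 \subsetneq \cdots \subsetneq V_k = \ms O_C^N$, pushing it forward through the universal quotient to a filtration of $F(m)$ by saturated subsheaves, and computing the Hilbert--Mumford weight of the Pl\"ucker embedding in terms of ranks and degrees of the pieces. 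This computation matches positivity of the weight with the slope inequality, and identifies the unique closed orbit in each semistable orbit closure with (the Hilbert point of) the polystable sheaf $\gr(F)$.

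Given the resulting GIT quotient $\cSh^\semistable_{C/k}(n,L) := R /\!/ \SL_N$, the remaining assertions follow. Projectivity, integrality, and the universal property of $\pi'$ among morphisms from $\mSh^\semistable$ to schemes are formal consequences of the GIT construction; local factoriality is the more delicate theorem of Drezet--Narasimhan about the singularities at strictly semistable points, which I would invoke rather than reprove. On the stable locus every sheaf is simple with $\Aut = \G_m$, cut to $\m_n$ by the determinant condition; equivalently the $\SL_N$-action on $R^\stable$ is free modulo the central $\m_n$, so $R^\stable \to \cSh^\stable_{C/k}(n,L)$ is an $\SL_N/\m_n$-torsor and $\pi$ realizes the stable stack as a $\m_n$-gerbe over its coarse space. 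Smoothness of $\cSh^\stable_{C/k}(n,L)$ comes from unobstructedness of deformations of locally free sheaves on a curve, already used in Proposition~\ref{P:unirational-curve-classical}. Finally, the bijection on geometric points in (2) and the S-equivalence characterization in (3) are immediate rereadings of the orbit analysis above: each closed orbit in $R^\semistable$ is represented by a unique polystable sheaf, and the orbit closure of any semistable point contains the orbit of its polystable representative $\gr(F)$.
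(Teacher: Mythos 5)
Your proposal follows essentially the same route as the paper, which likewise sketches the GIT construction via rigidified $\Quot$-scheme quotients and the identification of slope (semi)stability with GIT (semi)stability; you merely fill in the standard details (boundedness, the Hilbert--Mumford weight computation \`a la Seshadri--Gieseker, Drezet--Narasimhan for local factoriality, unobstructedness for smoothness). One small caution: the fixed-determinant stack $\mSh^\semistable_{C/k}(n,L)$ is not literally $[R/\SL_N]$, and $\m_n$ sits centrally in $\SL_N$ only when $n\mid N$, so the $\m_n$-gerbe structure in (4) is cleaner to deduce directly from $\Aut(F,\phi)=\m_n$ for stable $F$ (scalars preserving the trivialization of $\det F$), in line with the paper's rigidification viewpoint.
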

\begin{proof}[Extremely vague idea of proof]
  The very vague idea of the proof is this: given locally free sheaves $V$ on
  $C$, rigidify them by adding bases for the space $\Gamma(C,V(m))$ (i.e.,
  surjections $\ms O^{\chi(C, V(m))}\to V(m)$ for $m\gg 0$). One then shows that
  the rigidified problem lies inside an appropriate $\Quot$ scheme. This $\Quot$
  scheme has an action of $\PGL_N$, and Geometric Invariant Theory precisely
  describes quotients for such actions (once those actions have been suitably
  linearized in a projective embedding). The key to this part of
  the argument is the link between stability as defined above and the kind of
  stability that arises in GIT (which is phrased in terms of stabilizers of
  points).
\end{proof}

One of the most miraculous properties arising from stability is that
$\mSh^\stable_{C/k}(n,L)$ is separated. That is, a family of stable sheaves
parametrized by a dvr is unquely determined by its generic fiber. This is very
far from true in the absence of stability. For example, a generic extension of
$\ms O(-1)$ by $\ms O(1)$ on a curve $C$ (where $\ms O(1)$ is a sufficiently
ample invertible sheaf) is stable, but any
such sheaf admits the direct sum $\ms O(-1)\oplus\ms O(1)$ as a limit. (So, in
this case, a stable sheaf in a constant family admits a non-stable limit!)

We see from the preceding material that $\cSh^\semistable_{C/k}(n,L)$ is a
unirational projective variety, containing $\cSh^\stable_{C/k}(n,L)$ as a smooth
open subvariety that carries a canonical Brauer class represented by
$\mSh^\stable_{C/k}(n,L)$. This immediately gives us an interesting geometric picture.

\begin{thm}[Main result of \cite{MR2353678}]
  The Brauer group of $\cSh^\stable_{C/k}(n,L)$ is generated by the class
  $[\mSh^\stable_{C/k}(n,L)]$, whose order equals $\gcd(n, \deg(L))$.
\end{thm}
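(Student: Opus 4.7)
The plan is to split the theorem into two claims: \textbf{(i)} the class $\alpha:=[\mSh^\stable_{C/k}(n,L)]$ has order exactly $\gcd(n,\deg L)$ in $\Br(\cSh^\stable_{C/k}(n,L))$, and \textbf{(ii)} $\Br(\cSh^\stable_{C/k}(n,L))$ is generated by $\alpha$. The central input is the following dictionary for any $\m_n$-gerbe $\ms G\to X$: the Leray spectral sequence (using $\pi_\ast\G_m=\G_m$ and $R^1\pi_\ast\G_m\cong\underline{\Z/n}$) produces a short exact sequence $0\to\Pic(X)\to\Pic(\ms G)\to G\to 0$, where $G\subseteq\Z/n$ is the subgroup of ``weights'' realized by line bundles on $\ms G$, and the order of the associated Brauer class equals $n/|G|$. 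Thus claim (i) reduces to pinning down $G$ for the gerbe $\mSh^\stable_{C/k}(n,L)\to\cSh^\stable_{C/k}(n,L)$.

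For the inclusion $G\supseteq\langle\gcd(n,\deg L)\rangle$ (equivalently, $|\alpha|$ divides $\gcd(n,\deg L)$), I would use the universal twisted sheaf $\ms E$ on $C\times\mSh^\stable_{C/k}(n,L)$, of rank $n$ and weight $1$. For any line bundle $M$ on $C$ of sufficiently large degree $m$, cohomology-and-base-change produces a locally free twisted sheaf $\pi_{2,\ast}(\ms E\otimes p_1^\ast M)$ of weight $1$ on $\mSh^\stable_{C/k}(n,L)$, whose rank is $\chi(V\otimes M)=nm+\deg L+n(1-g)$ by Riemann--Roch. Taking determinants yields line bundles on the gerbe of weight $\equiv\deg L\pmod n$, and closing under tensor products gives $G\supseteq\langle\gcd(n,\deg L)\rangle$. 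For the reverse inclusion $G\subseteq\langle\gcd(n,\deg L)\rangle$, I would invoke the classical computation of the Picard group of the moduli space (Drezet--Narasimhan, Beauville--Laszlo--Sorger), which identifies $\Pic(\cSh^\stable_{C/k}(n,L))$ with $\Z$ generated by an explicit determinant class and shows that every line bundle on $\mSh^\stable_{C/k}(n,L)$ has weight a multiple of $\gcd(n,\deg L)$. Together these give $|\alpha|=\gcd(n,\deg L)$.

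For claim (ii), I would use that $\cSh^\stable_{C/k}(n,L)$ is smooth and open in a locally factorial projective unirational variety, so its Brauer group is finite torsion. The Kummer sequence combined with the already-known $\Pic$ and, in characteristic zero, Hodge-theoretic vanishings like $H^2(\ms O)=0$ coming from rational connectedness bound $\Br(\cSh^\stable_{C/k}(n,L))$ above by $\Z/\gcd(n,\deg L)$; since $\alpha$ already realizes an element of that order, equality holds. I expect the main technical obstacle to be precisely this last bound, especially in positive characteristic, where the $p$-torsion piece requires crystalline or logarithmic Hodge--Witt input rather than the simpler Hodge/topological reduction available in characteristic zero; all the other ingredients are controlled by Riemann--Roch on $C$ and classical results on Picard groups of moduli of bundles.
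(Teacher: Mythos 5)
You should first note a mismatch with the task as posed: the paper does not prove this statement at all — it is quoted as the main result of \cite{MR2353678} (Balaji--Biswas--Gabber--Nagaraj) and used as a signpost in the survey — so there is no internal proof to compare your argument against; your proposal has to be judged on its own merits. On those merits, your claim (i) is essentially the standard argument and is sound: the weight decomposition of $\Pic$ of a $\m_n$-gerbe, the identification of the order of the Brauer class with the positive generator of the realized-weight subgroup, and the construction of weight-$(\deg L \bmod n)$ line bundles as determinants of $\pi_{2,\ast}(\ms E\otimes p_1^\ast M)$ (with a uniform cohomology-and-base-change vanishing coming from boundedness of the stable family) correctly give that the order divides $\gcd(n,\deg L)$. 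The reverse bound does follow from knowing $\Pic$ of the \emph{stack} (Laszlo--Sorger/Beauville--Laszlo--Sorger rather than the coarse-space computation of Drézet--Narasimhan) together with the weight of its generator; that is a citation doing real work, but it is a legitimate route.

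The genuine gap is in claim (ii), and you have located but not closed it. The Kummer sequence bounds $\Br(U)[\ell]$ by $\H^2(U,\mu_\ell)/\mathrm{im}(\Pic(U))$, so knowing $\Pic(\cSh^\stable_{C/k}(n,L))\cong\Z$ and $\H^2(\ms O)=0$ is not enough: you need actual control of $\H^2(\cSh^\stable_{C/k}(n,L),\mu_\ell)$ (equivalently of the Néron--Severi group together with torsion in degree-$3$ cohomology) for the \emph{open} stable locus, which is only quasi-projective; one must either establish that the unstable/singular boundary has codimension at least $2$ (false or delicate for small $g$, $n$) and invoke purity, or argue directly on the stable locus, and in characteristic $p$ the $p$-primary part needs a separate argument, as you acknowledge. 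This upper bound — that every Brauer class of the coarse space is a multiple of $[\mSh^\stable_{C/k}(n,L)]$ — is precisely the content of \cite{MR2353678}, so as written your step (ii) amounts to citing the theorem being proved (or deferring its hardest part), whereas step (i) alone only computes the order of the given class, not generation. To make (ii) honest you would need either the cohomological computation of the moduli space (Harder--Narasimhan/Atiyah--Bott type input, plus a purity/codimension analysis and a $p$-torsion argument), or a geometric device such as the Hecke-correspondence/projective-bundle comparison with a coprime-degree moduli space, neither of which is sketched in your proposal.
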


The most interesting open question about the variety $\cSh^\stable_{C/k}(n,L)$
is this: is it rational? The answer is known to be ``yes'' when $n$ and
$\deg(L)$ are coprime, but otherwise the question is mysterious.

\subsection{Sheaves on a surface}
\label{sec:surface}

Increasing the dimension of the ambient space vastly increases the complexity of
the moduli space of sheaves. The space is almost never smooth. Fixing discrete
invariants no longer guarantees irreducibility. The Kodaira dimension of the
moduli space can be arbitrary. One cannot always form locally free limits of
locally free sheaves. The book \cite{MR2665168} is a wonderful reference for the
geometry of these moduli spaces, especially their GIT quotients, for surfaces.
We touch on some of the main
themes in this section.

Fix a smooth projective surface $X$ over an algebraically closed field $k$; let
$\ms O(1)$ denote a fixed ample invertible sheaf. We first define the basic
stacks that contain the moduli problems of interest. Fix a positive integer $n$,
an invertible sheaf $L$, and an integer $c$.

\begin{defn}
 The \emph{stack of torsion free sheaves of rank $n$, determinant $L$, and
   second Chern class $c$\/},
 denoted $\mSh^\tf_{X/k}(n, L, c)$, is the stack whose objects over $T$ are
 pairs $(F,\phi)$, where $F$ is a 
 $T$-flat quasi-coherent sheaf $F$ of finite presentation, $\phi:\det(F)\simto
 L_T$ is an isomorphism, and such that for each
 geometric point $t\to T$, the fiber $F_t$ is torsion free of rank $n$
 and second Chern class $c$.

 There is an open substack $\mSh^\lf_{X/k}(n, L, c)$ parametrizing those sheaves
 that are also locally free.
\end{defn}

Since the degeneracy locus of a general map $\ms O^{n-1}\to\ms O^{n}$ has
codimension $2$, no analogue of the Serre trick will work to show that the stack
$\mSh^\lf_{X/k}(n, L, c)$ is unirational. In fact, this is not true.

The notion of stability is also significantly more complicated for a surface.
There are now several notions, and these notions depend upon a choice of ample
class. Given a sheaf $G$, let $P_G$ denote its Hilbert polynomial (whose value
at $n$ is $\chi(X,G(n))$). We can write
$$P_G(n)=\sum_{i=0}^d\alpha_i(G)\frac{n^i}{i!}$$ for some rational numbers
$\alpha$, where $d$ is the dimension of the support of $G$.

\begin{defn}
  Given a torsion free sheaf $F$ on a surface $X$, the \emph{slope\/} of $F$ is
  $$\mu(F)=\frac{\det(F)\cdot\ms O(1)}{\rk(F)},$$
  where $\cdot$ denotes the intersection product of invertible sheaves.

  Given any sheaf $G$ on $X$ with support of dimension $d$, the \emph{reduced
Hilbert polynomial of $G$\/}, denoted $p_G$ is the
polynomial $\frac{1}{\alpha_d}P_G$.
\end{defn}

Recall that there is an ordering on real-valued polynomials: $f<g$, resp.\
$f\leq g$, if and only if for all $n$ sufficiently large $f(n) < g(n)$, resp.\
$f(n)\leq g(n)$. We will use this to define one of the notions of stability for
sheaves on a surface.

\begin{defn}
  A torsion free sheaf $F$ on $X$ is \emph{slope-stable\/}, resp.\
  \emph{slope-semistable\/}, if for any non-zero subsheaf $G\subset F$ we have
  $\mu(G)<\mu(F)$, resp.\ $\mu(G)\leq\mu(F)$.

  A sheaf $G$ on $X$ is \emph{Gieseker-stable\/}, resp.\
  \emph{Gieseker-semistable\/}, if for any subsheaf $G\subset F$ we have
  $p_G < p_F$, resp.\ $p_G\leq p_F$.
\end{defn}
Slope-stability is often called $\mu$-stability, and similarly for
semistability. Gieseker-stability is usually simply called stability, and
similary for semistability.

The following lemma gives the basic relation between these stability conditions.
\begin{lem}
  Any slope-stable sheaf is Gieseker-stable. Any torsion free Gieseker-semistable sheaf is slope-semistable.
\end{lem}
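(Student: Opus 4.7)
The plan is to use Hirzebruch--Riemann--Roch to make explicit the dependence of the reduced Hilbert polynomial on the slope, and then compare polynomials coefficient by coefficient. For a torsion-free sheaf $F$ of rank $r$ on $X$, Riemann--Roch yields
$$P_F(n)=\frac{rH^2}{2}n^2+\left(H\cdot c_1(F)-\frac{r}{2}H\cdot K_X\right)n+\text{const},$$
where $H$ denotes the class of $\ms O(1)$, so $\alpha_2(F)=rH^2$ and
$$p_F(n)=\frac{n^2}{2}+\frac{1}{H^2}\left(\mu(F)-\tfrac{1}{2}H\cdot K_X\right)n+\text{const}.$$
The crucial point is that the leading coefficient of $p_F$ is the universal constant $\tfrac12$, and the subleading coefficient depends on $F$ only through $\mu(F)$, since the $H\cdot K_X$ contribution is independent of $F$. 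Consequently, for any two torsion-free sheaves of positive rank, the sign of the $n$-coefficient of $p_G-p_F$ matches the sign of $\mu(G)-\mu(F)$, so the polynomial ordering is controlled at leading order by slopes.

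For the first claim, suppose $F$ is slope-stable and $0\neq G\subsetneq F$. I would first pass to the saturation $G^{\mathrm{sat}}\subseteq F$, the preimage in $F$ of the torsion subsheaf of $F/G$. Because $G^{\mathrm{sat}}/G$ is supported in dimension $0$, the ranks and first Chern classes of $G$ and $G^{\mathrm{sat}}$ agree, so $\mu(G^{\mathrm{sat}})=\mu(G)$; moreover $P_{G^{\mathrm{sat}}}-P_G$ is a positive constant, so $p_G\leq p_{G^{\mathrm{sat}}}$. Since $F/G^{\mathrm{sat}}$ is torsion-free and nonzero, $0<\rk G^{\mathrm{sat}}<\rk F$, and slope-stability gives $\mu(G^{\mathrm{sat}})<\mu(F)$. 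By the computation above, $p_{G^{\mathrm{sat}}}$ and $p_F$ have equal leading coefficient but strictly ordered subleading coefficients, so $p_{G^{\mathrm{sat}}}<p_F$; chaining yields $p_G\leq p_{G^{\mathrm{sat}}}<p_F$, which is Gieseker-stability.

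For the second claim, let $F$ be torsion-free and Gieseker-semistable, and consider $0\neq G\subsetneq F$. The case $\rk G=\rk F$ is automatic: $F/G$ is torsion on a surface, so $c_1(F/G)$ is an effective class (possibly zero) and $\mu(G)\leq\mu(F)$ without invoking the hypothesis. Assume therefore $0<\rk G<\rk F$. Since $F$ is torsion-free, so is $G$, hence $p_G$ and $p_F$ share the leading coefficient $\tfrac12$; by hypothesis $p_G\leq p_F$, and the definition of the polynomial ordering forces the $n$-coefficient of $p_G$ to be no greater than that of $p_F$, which by the observation translates to $\mu(G)\leq\mu(F)$.

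The main obstacle I anticipate is the saturation bookkeeping in the first claim: one must carefully verify that both the slope and the leading coefficient of the reduced Hilbert polynomial are preserved when replacing $G$ by $G^{\mathrm{sat}}$, which rests on the fact that $G^{\mathrm{sat}}/G$ is supported in codimension $2$ on a surface. The Riemann--Roch computation is routine, but organizing which coefficients are universal versus $F$-dependent is where care is needed.
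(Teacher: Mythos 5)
The paper states this lemma without proof, so there is nothing to compare against; judged on its own, your framework is the right (and standard) one: Riemann--Roch shows the reduced Hilbert polynomial of a torsion-free sheaf has universal leading coefficient $\tfrac12$ and an $n$-coefficient that is a fixed increasing affine function of the slope, and your treatment of the second claim, including the separate full-rank case, is correct.

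The first claim, however, contains a genuine error in exactly the place you flag as the crux. The quotient $G^{\mathrm{sat}}/G$ is the torsion subsheaf of $F/G$, and torsion on a surface can be supported on \emph{curves}, not just points: for instance $G=\ms O(-C)\subset\ms O\oplus\ms O=F$ has $G^{\mathrm{sat}}=\ms O$ and $G^{\mathrm{sat}}/G\cong\ms O_C$. So in general $c_1(G)\neq c_1(G^{\mathrm{sat}})$, $\mu(G)<\mu(G^{\mathrm{sat}})$ is possible, and $P_{G^{\mathrm{sat}}}-P_G$ is a linear polynomial, not a constant; the ``fact'' that $G^{\mathrm{sat}}/G$ sits in codimension $2$ is false. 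The argument survives, but with a different justification: $\rk G^{\mathrm{sat}}=\rk G$, and $P_{G^{\mathrm{sat}}}-P_G$ is the Hilbert polynomial of a torsion sheaf, hence eventually nonnegative, so $p_G\leq p_{G^{\mathrm{sat}}}$ still holds, and slope-stability applied to $G^{\mathrm{sat}}$ gives the strict comparison. Separately, you never treat subsheaves with $\rk G=\rk F$: there $G^{\mathrm{sat}}=F$, so your assertion that $F/G^{\mathrm{sat}}$ is nonzero fails and slope-stability gives nothing; instead argue directly that $p_F-p_G=P_{F/G}/\alpha_2(F)$ with $F/G$ a nonzero torsion sheaf, so $p_G<p_F$. (Note also that with the paper's definition, which imposes the slope inequality for \emph{all} nonzero subsheaves, saturation is unnecessary for the lower-rank case: apply slope-stability to $G$ itself; saturation is only needed if one tests stability on saturated or lower-rank subsheaves.)
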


Just as for curves, one has the following description of stable and semistable
moduli. We restrict our attention to torsion free sheaves for the sake of simplicity.
\begin{prop}
  The (semi-)stability conditions define a chain of open immersions of finite type
  Artin stacks over $k$
$$\mSh^{\mu-\stable}_{X/k}(n, L, c)\inj\mSh^{\stable}_{X/k}(n, L,
c)\inj\mSh^{\semistable}_{X/k}(n, L, c)\inj\mSh^{\mu-\semistable}_{X/k}(n, L,
c),$$
and all of these stacks are open in $\mSh^\tf_{X/k}(n, L, c).$
\end{prop}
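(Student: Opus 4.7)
The strategy is to prove the proposition in three movements: algebraicity (with local finite-presentation) of the ambient stack $\mSh^\tf_{X/k}(n, L, c)$; openness of each of the four (semi-)stability conditions in flat families of torsion free sheaves; and quasi-compactness of the $\mu$-semistable locus via boundedness.

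I would begin from the stack of coherent sheaves on $X$ with proper support, which is algebraic and locally of finite presentation over $k$ (a standard consequence of Grothendieck's theorem on $\Quot$ schemes, or of Artin's criteria applied to the well-behaved deformation theory of sheaves on a smooth surface). The conditions ``torsion free of rank $n$'', ``$\det F \simeq L$'', and ``$c_2(F) = c$'' each carve out a locally closed (in fact open, for the first and third) substack: torsion freeness is open in flat families on a regular base; the isomorphism $\det F \simto L_T$ is representable additional data (a $\G_m$-torsor over the fiber of the determinant morphism to $\Pic_{X/k}$ at $[L]$); and $c_2$ is locally constant in flat families. The outcome is that $\mSh^\tf_{X/k}(n, L, c)$ is algebraic and locally of finite type.

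Next I would verify openness of the stability conditions. The critical input is Grothendieck's boundedness lemma: within a flat family $\ms F$ of torsion free sheaves on $X\times T$ with fixed Hilbert polynomial, the subsheaves $G \subset \ms F_t$ whose slope (or reduced Hilbert polynomial) is bounded below by a fixed constant form a bounded family, hence are parametrized by a finite-type subscheme of a relative $\Quot$ scheme over $T$. The locus in $T$ where some destabilizing subsheaf exists is therefore constructible, and one checks stability under specialization by the standard DVR argument already sketched in the curve case: extend a generic destabilizer to a subsheaf with flat quotient, use that the Hilbert polynomial of the extension is constant across the closed fiber, and conclude that the specialization remains destabilized. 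The same template, with reduced Hilbert polynomials in place of slopes, handles Gieseker (semi-)stability, and the strict versions follow from a parallel analysis (or by passing to the Jordan--H\"older situation at the generic point). The chain of open immersions then reduces to the lemma stated immediately before the proposition.

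Finally, I would invoke the deep boundedness theorem for $\mu$-semistable torsion free sheaves with fixed $(n, L, c)$ on $X$ (Maruyama in characteristic zero, Langer in arbitrary characteristic). Boundedness upgrades local finite type to finite type for $\mSh^{\mu-\semistable}_{X/k}(n, L, c)$; the three smaller stacks are open substacks and are therefore also of finite type. The main obstacle in this program is the boundedness statement itself: Langer's positive-characteristic proof rests on delicate effective bounds for the restrictions of semistable sheaves to sufficiently ample hypersurfaces, and it is by a wide margin the deepest input to the proposition. Openness, while technically intricate, parallels the structural argument from the curve case and introduces no substantially new ideas.
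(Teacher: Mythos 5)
Your argument is correct, and it is in fact more complete than what the paper offers: the paper states this proposition without proof, deferring to the classical literature (\cite{MR2665168}), and the only argument it actually sketches is for the curve-case analogue, where openness is obtained by observing that the images are constructible and then checking stability under generization with a DVR, extending a destabilizing subsheaf of the generic fiber to one with flat quotient. Your proposal follows the same DVR template for the specialization step, but you correctly identify the two ingredients that the curve sketch silently avoids and that become essential on a surface: first, constructibility of the destabilizing locus is no longer automatic and requires Grothendieck's lemma bounding saturated subsheaves of bounded slope, so that destabilizers live in finitely many relative $\Quot$ schemes of finite type over the base (indeed, since those $\Quot$ schemes are proper over $T$, their images are already closed, so the DVR step can even be bypassed); second, the assertion that the four stability loci are of \emph{finite type}, which the paper's curve argument never addresses, genuinely needs boundedness of $\mu$-semistable sheaves with fixed $(n,L,c)$ --- Maruyama in characteristic zero and Langer in positive characteristic, the latter being necessary here since $k$ is an arbitrary algebraically closed field. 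Your handling of the ambient stack (algebraicity of the stack of coherent sheaves, openness of torsion-freeness, the determinant condition as representable data over the fiber of the map to $\Pic_{X/k}$, local constancy of $c_2$) is also the standard and correct route, so your writeup essentially reconstructs the Huybrechts--Lehn proof that the paper cites rather than reproduces.
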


Just as for sheaves on curves, a Gieseker-semistable sheaf $H$ admits a
Jordan-H\"older filtration into stable sheaves and a canonically associated
polystable sheaf $\gr(H)$.
\begin{thm}[Gieseker, modern interpretation]\label{sec:sheaves-surface}
  There is a diagram
  $$\xymatrix{\mSh^\stable_{X/k}(n, L, c)\ar[r]\ar[d]_\pi &
    \mSh^\semistable_{X/k}(n, L, c)\ar[d]^{\pi'}\\
    \cSh^\stable_{X/k}(n, L, c)\ar[r] & \cSh^\semistable_{X/k}(n, L, c),}$$
  in which
  \begin{enumerate}
  \item $\cSh^\semistable_{X/k}(n, L, c)$ is a projective scheme;
    \item $\cSh^\stable_{X/k}(n, L, c)\to \cSh^\semistable_{X/k}(n, L, c)$ is an open
      immersion;
      \item $\pi$ together with the scalar multiplication structure realize
  $\mSh^\stable_{X/k}(n, L, c)$ as a $\m_n$-gerbe over $\cSh^\stable_{X/k}(n, L, c)$.
  \end{enumerate}
  Moreover,
  \begin{enumerate}
  \item $\pi'$ universal among all morphisms from
    $\mSh^\semistable_{X/k}(n, L, c)$ to schemes;
  \item $\pi$ establishes a bijection between
    the geometric points of $S$ and isomorphism classes
    of geometric points of $\mSh^\stable_{X/k}(n, L, c)$;
  \item given an algebraically closed field $\kappa$ and two
    sheaves $H,H'\in\mSh^\semistable_{X/k}(n, L, c)(\kappa)$,
    we have that $\pi(H)=\pi(H')$ if and only if $\gr(H)\cong\gr(H')$.
  \end{enumerate}
\end{thm}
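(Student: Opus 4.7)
The plan is to mimic the curve-case strategy sketched earlier in the section: rigidify by adding bases of global sections, embed the result in a Quot scheme carrying a $\PGL_N$-action, and construct the coarse objects via GIT. First I would establish boundedness of the family of Gieseker-semistable sheaves with invariants $(n, L, c)$ (Grothendieck, refined by Maruyama, Simpson, and Langer). Boundedness yields an integer $m_0$ such that for every $m \geq m_0$ and every semistable $F$, the twist $F(m)$ is globally generated, has $N = \chi(X, F(m))$ sections, and vanishing higher cohomology. A choice of basis of $\Gamma(X, F(m))$ gives a surjection $\ms O^N \surj F(m)$, and hence a point of the Quot scheme parametrizing quotients with the correct Hilbert polynomial. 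The locally closed subscheme $R \subset \Quot_{\ms O^N}$ cut out by demanding that the evaluation map is an isomorphism on global sections and that the determinant of the quotient is identified with $L$ inherits a natural $\PGL_N$-action whose stack-theoretic quotient recovers the substack $\mSh^\semistable_{X/k}(n, L, c)$.

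Second, I would invoke Gieseker's comparison between Gieseker-stability and the GIT-stability of points of $\Quot_{\ms O^N}$ in a suitable projective embedding. A Pl\"ucker-type embedding into a Grassmannian of sections of a further twist, with the natural $\SL_N$-linearization, makes GIT-(semi)stability computable via the Hilbert-Mumford numerical criterion. A one-parameter-subgroup calculation then shows that for $m$ sufficiently large the GIT-(semi)stability of a point $[\ms O^N \surj F(m)] \in R$ coincides with the Gieseker-(semi)stability of the sheaf $F$; this is the technical heart of the argument, worked out in detail in \cite{MR2665168}. GIT then produces the projective scheme $\cSh^\semistable_{X/k}(n, L, c)$ as the good quotient of the semistable locus, together with the open subscheme $\cSh^\stable_{X/k}(n, L, c)$ corresponding to the stable points, giving (1), (2), (3) of the first list and the universality statement.

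Third, I would extract the S-equivalence and gerbe statements. In GIT, two semistable points map to the same point of the quotient if and only if the closures of their orbits meet inside the semistable locus. Given a Gieseker-semistable $H$ with Jordan--H\"older filtration $H^\bullet$, a direct construction produces a one-parameter subgroup $\lambda \subset \PGL_N$ whose limit degenerates $[\ms O^N \surj H(m)]$ to $[\ms O^N \surj \gr(H)(m)]$. Hence every semistable orbit contains the polystable sheaf $\gr(H)$ in its closure, yielding the criterion $\pi'(H) = \pi'(H')$ iff $\gr(H) \cong \gr(H')$ and, in particular, the bijection on geometric points of the stable locus. Finally, a Gieseker-stable sheaf is simple with automorphism group $\G_m$, so after imposing the determinant rigidification $\phi: \det F \simto L$ the surviving scalars are those $\lambda \in \G_m$ with $\lambda^n = 1$. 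This realizes $\mSh^\stable_{X/k}(n, L, c) \to \cSh^\stable_{X/k}(n, L, c)$ as a $\m_n$-gerbe under scalar multiplication, completing (4).

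The main obstacle is the second step: translating Gieseker-(semi)stability into Hilbert--Mumford GIT-(semi)stability in a specific projective embedding and verifying that the requisite twist $m$ can be chosen uniformly in the bounded family. Boundedness itself is the other delicate ingredient, particularly in positive characteristic, where Langer's estimates replace the simpler characteristic-zero arguments of Simpson. Granted these inputs, the remainder is a formal consequence of GIT together with elementary deformation-theoretic considerations about the automorphism groups of stable sheaves.
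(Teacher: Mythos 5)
Your proposal is correct and follows essentially the same route the paper indicates: the paper gives only an ``extremely vague idea of proof'' for the curve analogue (rigidify by choosing bases of $\Gamma(X,F(m))$, embed in a $\Quot$ scheme with its $\PGL_N$-action, and match Gieseker-(semi)stability with GIT-(semi)stability), states the surface case without proof, and defers to \cite{MR2665168}, which is exactly the boundedness--Quot--GIT argument you outline, including the S-equivalence criterion via orbit closures and the $\m_n$-gerbe structure coming from scalar automorphisms of stable sheaves with fixed determinant.
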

The reader will note that Theorem \ref{sec:sheaves-surface} is somewhat weaker
than Theorem \ref{sec:sheaves-curve}, in that it contains no information about the
geometry of $\cSh^\semistable_{X/k}(n,L,c)$. In fact, one should be careful to remember that the notion of stability depends
on the choice of $\ms O(1)$, and this changes the spaces that result in Theorem \ref{sec:sheaves-surface}. However, any two choices of
ample divisor class yield open substacks that are equal over dense open
substacks (so that generic properties are identical across polarizations), and
the resulting moduli spaces are birational.

\begin{example}
  Sometimes we can understand the moduli space very well. For example, if $X$ is
  a K3 surface and $n, L, c$ are discrete invariants such that the moduli space
  $\cSh^\stable_{X/k}(n, L, c)$ has dimension $2$, then any proper component of
  this space is itself a K3 surface, and one can calculate its cohomology
  explicitly. In particular, over $\C$, one can calculate its Hodge structure,
  which completely captures its isomorphism class (by the Torelli theorem).
\end{example}

The essence of the moduli theory of sheaves on surfaces, as discovered by
O'Grady, is in the hierarchy implicit in the second Chern class. If a moduli
space contains both locally free and non-locally free (but torsion free) points, then forming reflexive
hulls of torsion free points relates the boundary (the non-locally free locus)
to locally free loci in moduli spaces of \emph{lower\/} second Chern class. In
this way, one gets a hierarchy of moduli spaces in which the boundary of a given
space is made of spaces lower in the hierarchy. This is very similar to the
moduli space of curves: compactifying it, one finds that the boundary is made of
curves of lower genus, and this is what permitted Deligne and Mumford to give
their beautiful proof of the irreducibility of $\ms M_g$, essentially by
induction.

As O'Grady realized, a similar kind of ``induction'' gives us quite a bit of
information about the moduli spaces of sheaves on $X$, \emph{asymptotically in
  the second Chern class\/}. More precisely, he proved the following.

\begin{thm}[O'Grady]\label{sec:sheaves-surface-1}
  Given $n$ and $L$ and an integer $d$,
  there is an integer $C$ such that for all $c\geq C$, the
  projective scheme $\cSh^\semistable_{X/k}(n, L, c)$ is geometrically integral
  and lci with singular locus of codimension at least $d$.
\end{thm}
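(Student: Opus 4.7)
The plan is to follow O'Grady's strategy of asymptotic induction, exploiting the fact that the boundary of the locally free locus in $\cSh^\semistable_{X/k}(n,L,c)$ is built from moduli spaces with strictly smaller second Chern class. The main deformation-theoretic input is that at a stable sheaf $[F]$ the stack $\mSh^\stable_{X/k}(n,L,c)$ has tangent space $\Ext^1(F,F)_0$ and obstruction space $\Ext^2(F,F)_0$, while the expected dimension is a linear function of $c$ of slope $2n$ (by Hirzebruch--Riemann--Roch). Thus all three conclusions -- geometric integrality, the lci property, and codimension at least $d$ for the singular locus -- will follow once one shows that the ``bad'' locus (boundary stratum plus jumping locus of $\Ext^2_0$) has codimension in $\cSh^\semistable$ tending to infinity with $c$.

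First I would stratify the non-locally-free part of $\mSh^\tf_{X/k}(n,L,c)$ by the length $\ell$ of the torsion sheaf appearing in the reflexive-hull sequence
$$0\to F\to F^{\vee\vee}\to T\to 0.$$
Since $X$ is a smooth surface, $F^{\vee\vee}$ is locally free with the same rank and determinant as $F$ but with second Chern class $c-\ell$. Each stratum thus fibers over $\mSh^\lf_{X/k}(n,L,c-\ell)$ with fibers given by a relative Quot scheme of finite-length quotients of $F^{\vee\vee}$. A careful dimension count (the O'Grady boundary estimate) shows that the codimension of the stratum grows at least linearly in $\ell$, so for $c$ large enough the entire non-locally-free locus has codimension at least $d$ in $\cSh^\semistable_{X/k}(n,L,c)$.

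Next I would handle irreducibility and reducedness of the locally free locus by a Serre-style extension construction: for $m\gg 0$ and a general length-$e$ subscheme $Z\subset X$, a general extension
$$0\to\ms O_X^{n-1}\to F(m)\to I_Z\otimes L(nm)\to 0$$
produces a slope-stable locally free sheaf with the prescribed invariants. The total space of such extensions is irreducible because $\Hilb^e(X)$ is irreducible by Fogarty and the relevant $\Ext^1$ forms a vector bundle on a dense open, and one checks that for $c$ large every stable locally free sheaf in the moduli space arises this way. This puts the locally free stratum into one irreducible family of the expected dimension, which by Step 1 sweeps out a dense open of $\cSh^\semistable_{X/k}(n,L,c)$. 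For the lci property and the singular locus bound, one stratifies by $\dim\Ext^2_0(F,F)$, translates via Serre duality into a bound on $\Hom(F,F\otimes\omega_X)_0$, and uses stability together with Bogomolov's inequality to control the jumping loci; where $\Ext^2_0=0$ the stack is smooth, while in general it is locally cut out by $\dim\Ext^2_0$ equations in a smooth ambient, which yields lci.

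The hard part will be the quantitative dimension estimates in Steps 1 and 2 -- O'Grady's boundary count and the control of the $\Ext^2$-jumping loci. These require delicate use of stability, Serre duality, and Bogomolov-type inequalities, and they are the technical core of the argument; the rest is assembled from standard deformation theory and from Theorem \ref{sec:sheaves-surface}. Once those estimates are in hand, one simply chooses $C$ so large that all of the relevant codimensions simultaneously exceed $d$.
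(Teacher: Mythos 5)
A point of calibration first: the paper does not actually prove this statement --- it is quoted as background and attributed to O'Grady (\cite{MR1376250}), with only an impressionistic description of the method (the double-dual hierarchy relating the non-locally-free boundary to moduli spaces of lower $c_2$) in the surrounding text and in a later remark. Measured against that outline, your skeleton is the right one: the reflexive-hull stratification $0\to F\to F^{\vee\vee}\to T\to 0$ with codimension of the length-$\ell$ stratum growing in $\ell$, plus expected-dimension and $\Ext^2_0$-jumping estimates, is exactly the O'Grady strategy. Your lci reasoning is also sound modulo the estimates: once every irreducible component has the expected dimension, the local cut-out by $\dim\Ext^2_0$ equations inside a smooth space of dimension $\dim\Ext^1_0$ gives lci, and Cohen--Macaulayness plus generic smoothness then gives reducedness.

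The genuine gap is in your irreducibility step. The Serre-construction parametrization, as you state it, does not cover the stable locally free locus by an irreducible family: (i) for a fixed stable $F$, the zero schemes $Z$ cut out by $n-1$ general sections of $F(m)$ need not be \emph{general} points of $\Hilb^e(X)$, so restricting to the dense open where the relative $\Ext^1$ is a vector bundle can lose sheaves, and the claim ``every stable locally free sheaf arises this way'' from a general $Z$ is not justified; (ii) the total space of the extension groups over all of $\Hilb^e(X)$ is only a linear cone over the Hilbert scheme and can be reducible precisely because of the jumping strata, so its irreducibility is not free --- it requires dimension estimates for those strata; and (iii) one still has to show that general extensions are (semi)stable and that the stable points of the parameter space dominate the moduli space. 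These are exactly the delicate points that make the published proofs long; O'Grady's actual argument, as the paper's own remark indicates, is instead an induction on $c_2$ in which each component of the moduli space is forced to meet the boundary, boundary strata are assembled from lower moduli spaces, and one shows the set of components shrinks and eventually coalesces as $c_2$ grows --- not a one-shot extension parametrization. Finally, since the theorem is stated over an arbitrary algebraically closed field, the Bogomolov-type inequality and restriction theorems you invoke for the $\Ext^2$-jumping loci require their positive-characteristic versions (Langer), which your sketch implicitly assumes.
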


Numerous authors have studied these moduli spaces in special cases, their
Kodaira dimensions for surfaces of general type, etc. It is beyond the scope of
this article to give a comprehensive guide to literature, but the reader will be
well-served by consulting \cite{MR2665168} (which has as comprehensive a
reference list as I've ever seen).

\subsection{Guiding principles}
\label{sec:guide}

As a conclusion to this highly selective whirlwind tour of the basics of moduli
of semistable sheaves on curves and surfaces, let me state a few key points.

\begin{enumerate}
\item The geometry of moduli spaces of sheaves is not pathological for low-dimensional varieties.
\item In dimension $1$, the moduli spaces are rationally connected.
  \item In dimension $2$, a phenomenon very similar to the behavior of moduli of
    curves is observed: a hierarchy indexed by a discrete invariant that inductively improves geometry.
    \item (This one will be more evident below.) The classical moduli spaces
      (the coarse spaces or the GIT quotients) rarely carry universal sheaves,
      because the stable loci are often non-trivial gerbes over them.
  \end{enumerate}

Much of the recent successes achieved with twisted sheaves leverage these
phenomena, after recognizing that they continue to hold in greater generality
(for other kinds of ambient spaces, over more complex bases, without
stability conditions, on the level of stacks of sheaves, etc.).
As our thought experiment progresses in Section
\ref{sec:basics}, we will come back to these principles, establishing
appropriate analogues of them, and we will apply these analogues in Section
\ref{sec:case-studies} to give some concrete examples demonstrating how the
modicum of geometry we can develop in a more general setting suffices to prove
non-trivial results.

Before getting to that, however, we give a few examples showing foundational
difficulties inherent in this entire formulation of the moduli theory of sheaves.

\section{Some less classical examples}
\label{sec:motivations}

\subsection{A simple example}
\label{sec:motivating-example}

A simple example is given by the Picard scheme. Let's study a very special
case. Let $X$ be the conic curve in $\P^2_{\R}$ defined by the equation
$x^2+y^2+z^2=0$. This is a non-split conic. The Picard scheme of $X$ over
$\Spec\R$ is the constant group scheme $\Z_{\R}$. But not every section of $\Z$
corresponds to an invertible sheaf on $X$. For example, the section $1$ cannot
lift to an invertible sheaf, because any such sheaf would have degree $1$, and
thus $X$ would have an $\R$-point, which it does not.

How do we understand this failure? The classical answer goes as follows. Write
$$f:X\to\Spec\R$$ for the structure morphism. The Picard scheme represents the
functor $\R^1f_\ast\G_m$. The sequence of low-degree terms in the Leray
spectral sequence for $\G_m$ relative to $f$ is
$$0\to\Pic(X)\to\H^0(\Spec\R,\R^1f_\ast\G_m)\to\H^2(\Spec\R,\G_m)\to\H^2(X,\G_m).$$
The second group is just the space of sections of $\Pic_{X/\R}$, and this
sequence tells us that  
there is an obstruction in the Brauer group of $\R$ to lifting a section of the
Picard scheme to an invertible sheaf, and that this
obstruction lies in the kernel of the restriction map $\Br(\R)\to\Br(X)$. In the
case of the non-split conic, this restriction map is the $0$ map (i.e., the
quaternions are split by $X$), and $1\in\Z$ maps to the non-trivial element of
$\Br(\R)$, giving the obstruction map. The identity map
$\Pic_{X/\R}\to\Pic_{X/\R}$ is a section over $\Pic_{X/\R}$; there is an
associated \emph{universal obstruction\/} in $\H^2(\Pic_{X/\R},\G_m)$.

There is another way to understand this failure that is more geometric. Instead
of thinking about the Picard scheme, we could instead think about the Picard
\emph{stack\/}, which we will temporarily write as $\ms P$.
This is the stack on $\Spec\R$ whose objects over $T$ are
invertible sheaves on $X\times_{\Spec\R}T$. The classical formulation of
Grothendieck says precisely that the Picard scheme represents the
sheafification of $\ms P$. That is, there is a morphism $\ms P\to\Pic_{X/\R}$.
The stack $\ms P$ has a special property: the automorphism sheaf of any object of
$\ms P$ is canonically identified with $\G_m$ (i.e., scalar multiplications in a
sheaf). This makes $\ms P\to\Pic_{X/S}$ a \emph{$\G_m$-gerbe\/}. By Giraud's
theory, $\G_m$-gerbes over a space $Z$ are classified by $\H^2(Z,\G_m)$. The
cohomology class corresponding to $\ms P\to\Pic_{X/\R}$ is precisely the
universal obstruction -- the obstruction to lifting a point of $\Pic_{X/\R}$ to an
object of $\ms P$.

The advantage of the latter point of view is that we can then work with the
universal sheaf $\ms L$ on $X\times\ms P$. This universal sheaf has a special
property: the canonical right action of the inertia stack of $\ms P$ on $\ms L$ is
via the formula $(\ell,\alpha)\mapsto \alpha^{-1}\ell$ on local sections.
(Technical note: the inertial action is a \emph{right\/} action, whereas we
usually think of the $\ms O$-module structure on an invertible sheaf as a
\emph{left\/} action. We want the associated left action to be the usual scalar
multiplication action, which means that the right action must be the inverse of
this, resulting in the confusing sign.)

\subsection{A more complex example}
\label{sec:another-ex}

This example has to do with the Brauer group and Azumaya algebras. A reader
totally unfamiliar with these things is encouraged to come back to this example
later.

Fix an
Azumaya algebra $\ms A$ on a scheme $X$, say of degree $n$ (so that $\ms A$ has
rank $n^2$ as a sheaf of $\ms O_X$-modules). The sheaf $\ms A$ is a form of some
matrix algebra $\M_n(\ms O_X)$, where $n$ is a global section of the constant
sheaf $\Z$ on $X$. The Skolem-Noether theorem tells us that the sheaf of
automorphisms of $\M_n(\ms O_X)$ is $\PGL_n$, and descent theory gives us a
class in $\H^1(X,\PGL_n)$ that represents $\ms A$. The classical exact sequences 
$$1\to\G_m\to\GL_n\to\PGL_n\to 1$$
and
$$1\to\m_n\to\SL_n\to\PGL_n\to 1$$
yield a diagram of connecting maps
$$\xymatrix{& \H^2(X,\m_n)\ar[dd]\\
  \H^1(X,\PGL_n)\ar[ur]\ar[dr] & \\
  & \H^2(X,\G_m),}$$
giving us two abelian cohomology classes one can attach to $\ms A$. Write 
$[\ms A]$ for the image in $\H^2(X,\G_m)$. We know from the theory of Giraud that the
latter group parametrizes isomorphism classes of $\G_m$-gerbes $\ms X\to X$. 

When $[\ms A]=0$, the gerbe $\ms X$ is isomorphic to $\B\G_m\times X$, and the
algebra $\ms A$ is isomorphic to $\send(V)$ for some locally free sheaf $V$ on
$X$; moreover, $V$ is unique up to tensoring with an invertible sheaf on $X$.
This gives a classification of all Azumaya algebras with trivial Brauer class.
Moreover, this helps us understand their moduli: the moduli space is essentially
the moduli space of locally free sheaves on $X$ modulo the tensoring action of
the Picard scheme. When $X$ is a curve or surface, these spaces are rather
well understood; they have well-known geometrically irreducible locally closed
subspaces, etc.

But what about non-trivial classes? It turns out that there is always a locally
free sheaf $V$ \emph{on the gerbe $\ms X$\/} such that the original Azumaya
algebra $\ms A$ is isomorphic to $\send(V)$. This sheaf $V$ doesn't come from
$X$, but it has identical formal properties. In particular, we can study the
moduli of the Azumaya algebras $\ms A$ with the fixed class in terms of moduli
of the sheaves $V$ that appear. When $X$ is a curve or surface, we again get
strong structural results. In particular, there are always canonically defined
geometrically integral locally closed subschemes of the moduli space. This has
real consequences. For example, when working over a finite field, any geometrically integral
scheme of finite type has a $0$-cycle of degree $1$. If $X$ is a curve, the
moduli space is geometrically integral and rationally connected (if one fixes
the determinant, an operation that is quite mysterious from the Azumaya algebra
point of view), and then the Graber-Harris-Starr theorem shows that there is a
rational point. These geometric statements ultimately tell us that if $X$ is a
smooth projective surface over an algebraically closed or finite field and
$\alpha$ is a Brauer class on $X$ of order $n$, then there is an Azumaya algebra
on $X$ in the class $\alpha$ that has degree $n$.

The point of this example is to illustrate a particular workflow: an algebraic
question can be transformed into a geometric question about a moduli space of
sheaves on a gerbe. It turns out that if one places these gerbes and
associated moduli gerbes of sheaves on an equal footing, then the resulting symmetry in the theory
sheds light on several problems, coming from algebra and number theory. Among
these are the Tate conjecture for K3 surfaces, the index-reduction problem for
field extensions, and the finiteness of the $u$-invariant of a field of
transcendence degree $1$ over $\Q$. We will discuss some of these below.

\subsection{A stop-gap solution: twisted sheaves}
\label{sec:stopgap}

Faced with the kinds of difficulties described in the previous two
sections, algebraic geometers 
have spent the last decade or so rewriting the theory in terms of ``twisted
sheaves''. The basic idea is to embrace the gerbes that appear naturally in the moduli
problems (coming from their natural stacky structures). Given a variety $X$,
then, the moduli ``space'' of sheaves on $X$ is really a gerbe $\ms M\to M$ over
some other space (at least near the general point). The universal
sheaf lives on $X\times\ms M$ and can then be used to compare $X$ and
$\ms M$ (for example, using cohomology, $K$-theory,
Chow theory, derived categories, motives, \ldots).

In Section \ref{sec:catalog} we describe some of the results that this
approach has yielded, due to many authors
working in disparate areas of the subject over a number of years.

\section{A catalog of results}
\label{sec:catalog}

In this section, we provide a brief catalog of some results proven about twisted
sheaves and results that use twisted sheaves. The literature is rather vast, so
this is only the tip of the iceberg. The reader will profit from following the
references in the works described below. In particular, I have given very short
shrift to the massive amount of work done in the direction of mathematical
physics (including the deep work of 
Block, Pantev, Ben-Bassat, Sawon, and many others).

The ``theorems'' stated here are occasionally a bit vague. They should be seen
purely as signposts indicating an interesting paper that deserves a careful
reading. I have kept this catalog purposely impressionistic in the hope that a
reader might get intrigued by a theorem or two and follow the included
references.

\subsection{Categorical results}
\label{sec:structural}

\begin{thm}[Antieau, conjectured by C\u{a}ld\u{a}raru, \cite{MR3466552}]
  Given quasicompact quasiseparated schemes $X$ and $Y$ over a commutative ring
  $R$ and Brauer classes $\alpha\in\Br(X)$ and $\beta\in\Br(Y)$, if there is an
  equivalence $\Qcoh(X,\alpha)\cong\Qcoh(Y,\beta)$ then there is an isomorphism
  $f:X\to Y$ such that $f^\ast\beta=\alpha$.
\end{thm}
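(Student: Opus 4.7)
The plan is to prove a twisted Gabriel--Rosenberg reconstruction theorem: recover the pair $(X,\alpha)$ intrinsically from the abelian category $\Qcoh(X,\alpha)$, and then apply this recovery to the hypothesized equivalence. The first step produces the scheme isomorphism $f\colon X\to Y$, the second step matches the Brauer classes.

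First I would recover the underlying scheme $X$. For an untwisted qcqs scheme, Rosenberg's extension of Gabriel's theorem constructs $X$ out of $\Qcoh(X)$: points correspond to certain prime Serre subcategories, the topology comes from the support lattice, and the structure sheaf is recovered from endomorphisms of the identity functor together with its localizations. The key observation is that $\Qcoh(X,\alpha)$ is \emph{fppf-locally} equivalent to $\Qcoh$ of the trivializing cover, so its lattice of Serre subcategories is canonically identified with that of $\Qcoh(X)$, and twisted sheaves have well-defined supports in $X$. The structure sheaf does not literally lie inside $\Qcoh(X,\alpha)$, but it appears as the sheafy endomorphism ring of the identity endofunctor: for any open $U\subset X$, the natural transformations of the identity of the Serre quotient recover $\Gamma(U,\ms O_U)$. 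Running the Rosenberg machine on $\Qcoh(X,\alpha)$ therefore yields $X$, and applied to the equivalence it yields $f\colon X\simto Y$.

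Second I would match the Brauer classes. Once $X$ has been recovered, $\Qcoh(X,\alpha)$ carries a canonical action of $\Qcoh(X)$ via tensor product of untwisted by twisted sheaves, making it an \emph{invertible} $\Qcoh(X)$-linear Grothendieck category in the sense of categorical Brauer theory. Under the natural identification $\H^2(X,\G_m)\cong \Br(X)$, the isomorphism class of this invertible module category is precisely $\alpha$. The given equivalence, transported along $f$, becomes a $\Qcoh(Y)$-linear equivalence between $f_*\Qcoh(X,\alpha)$ and $\Qcoh(Y,\beta)$, forcing $f^*\beta=\alpha$ in $\Br(X)$.

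The main obstacle I expect is in the second step: showing that the $\Qcoh(X)$-module structure on $\Qcoh(X,\alpha)$ is intrinsic to the abelian category, so that any equivalence automatically respects it. In the untwisted case this is tautological because the module structure is composition of morphisms, but for twisted sheaves one must reconstruct the $\ms O_X$-action from natural transformations of the identity functor and then check that the equivalence intertwines these actions on both sides. This is essentially a Morita-theoretic maneuver, but ensuring that there is a generator rich enough to see the action --- in the qcqs (non-noetherian) setting, and for Brauer classes of \emph{infinite} order where no global Azumaya representative need exist --- is the heart of the argument. Once that compatibility is in place, the reduction to Gabriel--Rosenberg for $X$ and to Giraud's classification of $\G_m$-gerbes for $\alpha$ closes the proof.
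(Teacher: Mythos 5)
First, a point of reference: this theorem is one of the signpost entries in the catalog of Section \ref{sec:catalog}; the paper gives no proof of it at all, but simply cites Antieau's article \cite{MR3466552}. So your proposal can only be measured against the published argument, not against anything in this text. Measured that way, your outline does track the actual strategy: reconstruct the scheme from the abelian category by a Gabriel--Rosenberg-type argument, then identify the Brauer class by viewing $\Qcoh(X,\alpha)$ as an invertible $\Qcoh(X)$-linear category (a twisted form of the stack $U\mapsto\Qcoh(U)$), classified by $\H^2(X,\G_m)$.

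The difficulty is that your write-up defers exactly the steps that constitute the real content. (1) Gabriel--Rosenberg reconstruction for general quasicompact quasiseparated (non-noetherian) schemes is itself delicate --- Serre subcategories are not classified by supports in this generality, and the known proofs require the Rosenberg/Gabber-style machinery --- and your claim that the Serre lattice of $\Qcoh(X,\alpha)$ ``is canonically identified'' with that of $\Qcoh(X)$ via an fppf trivialization is asserted, not proved; one must check that the intrinsic spectrum-with-structure-sheaf built from the twisted category glues to $(X,\ms O_X)$ and that an abstract equivalence automatically intertwines the localization data. (2) For the Brauer class you explicitly flag, but do not close, the central issue: that the $\Qcoh(X)$-module (equivalently $\ms O$-linear stack) structure is intrinsic to the abelian category, so the given equivalence becomes linear over the reconstructed isomorphism $f$, and that linear cocontinuous autoequivalences of $\Qcoh(U)$ are only twists by line bundles, so that the class of the twisted form is well defined and pinned down as $\alpha$. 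That classification of linear autoequivalences, plus the descent argument identifying the class, is where Antieau's paper does its work; without it the final sentence ``forcing $f^\ast\beta=\alpha$'' is not yet an argument. A small additional confusion: since the statement takes $\alpha\in\Br(X)$, an Azumaya representative exists by definition, so your worry about ``classes of infinite order with no global Azumaya representative'' does not arise here (Brauer classes are torsion); the genuine non-noetherian subtleties are in the reconstruction step, not in the existence of a representative. In short: right skeleton, but the two load-bearing lemmas are assumed rather than supplied.
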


\begin{thm}[C\u{a}ld\u{a}raru, \cite{MR1887894}]
  Suppose $X\to S$ is a generic elliptic threefold with relative Jacobian $J\to
  S$, and let $\widebar J\to J$ be an analytic resolution of singularities. Let
  $\alpha\in\Br(\widebar J)$ be the universal obstruction to the Poincar\'e
  sheaf on $X\times_S J$. Then there is an equivalence of categories
  $\D(X)\cong\D(\widebar J, \alpha)$ induced by a twisted sheaf on
  $X\times_S\widebar J$. 
\end{thm}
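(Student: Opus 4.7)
The plan is to build an explicit twisted Fourier--Mukai kernel on $X\times_S\widebar J$ and verify a Bridgeland-type strong simplicity criterion for it to induce a derived equivalence.

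First I would construct the twisted Poincar\'e sheaf. \'Etale-locally on $S$, the fibration $X\to S$ admits a section, so the relative Picard scheme is canonically identified with $X\to S$ over that patch and carries an honest Poincar\'e line bundle. Distinct local sections differ by a translation, so these local Poincar\'e sheaves patch together only up to a $\G_m$-gerbe; by construction the class of this gerbe is the universal obstruction $\alpha\in\Br(J)$. Pulling back via $\widebar J\to J$ and taking reflexive hulls across the exceptional locus yields a coherent sheaf $\ms P$ on $X\times_S\widebar J$ that is invertible on the $\alpha^{-1}$-gerbe in the second factor---in other words, an $\alpha^{-1}$-twisted sheaf---and this will serve as the Fourier--Mukai kernel.

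Next I would study the induced Fourier--Mukai functor
\begin{equation*}
\Phi : \D(X) \to \D(\widebar J,\alpha), \qquad \Phi(F) = \R\pr_{2,\ast}(\pr_1^\ast F \ltensor \ms P).
\end{equation*}
To prove $\Phi$ is an equivalence, I would verify Bridgeland's orthogonality conditions for the fiber family $\ms P_y = \ms P|_{X\times\{y\}}$: each $\ms P_y$ is a rank-one torsion-free sheaf of degree $0$ on a single fiber of $X\to S$, and one needs $\Ext^i(\ms P_{y_1},\ms P_{y_2})$ to vanish except in one degree when $y_1\neq y_2$, and to reproduce the cohomology of a point when $y_1=y_2$. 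Over smooth elliptic fibers this reduces to Mukai's classical self-duality $\D(E)\simto\D(\Jac E)$ for abelian varieties. Over the singular fibers permitted by the ``generic'' hypothesis---isolated irreducible nodal fibers---the corresponding analysis is extracted from an explicit local analytic model for $J\to S$ near a nodal fiber together with its small resolution.

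The main obstacle is precisely this behavior over the discriminant of $X\to S$. The relative Jacobian $J$ acquires singularities exactly over the nodal fibers, and the small resolution $\widebar J\to J$ typically fails to exist algebraically; this is why the theorem is stated in the analytic category. One must also check that the reflexive extension of the twisted Poincar\'e sheaf across the exceptional locus remains sufficiently flat over $\widebar J$ that $\Phi$ preserves boundedness and that the orthogonality computation passes through base change. Once these local analytic checks are made, patching the singular-fiber picture with the generic smooth-fiber equivalence gives that $\Phi$ is fully faithful, and a Serre-duality adjunction (or a direct symmetry argument using the analogous kernel on $\widebar J\times_S X$) upgrades this to the desired equivalence of categories.
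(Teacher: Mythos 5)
The paper you are comparing against does not actually prove this statement: it appears in the ``catalog of results'' section purely as a signpost, with the proof living in C\u{a}ld\u{a}raru's cited paper. So the comparison has to be with C\u{a}ld\u{a}raru's argument, and your outline does match its broad strategy: a twisted (pseudo-)universal sheaf on $X\times_S\widebar J$ as Fourier--Mukai kernel, a twisted version of Bridgeland's strong-simplicity criterion for full faithfulness, and the ``special kernel'' / Serre-duality step to upgrade to an equivalence, all carried out in the analytic category because the small resolution $\widebar J\to J$ is only analytic.

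However, there is a genuine gap, and it sits exactly where the theorem is hard. Your picture of the degenerate fibers is wrong: for a generic elliptic threefold the nodal ($I_1$) fibers are not isolated (they occur over the whole discriminant curve $\Delta$), and over the finitely many singular points of $\Delta$ the fibers are worse --- reducible $I_2$ fibers over the nodes of $\Delta$ and cuspidal fibers over its cusps. If all fibers were integral and nodal, as you assume, your own first step would show $J$ is smooth: \'etale-local sections plus autoduality of compactified Jacobians of integral genus-one curves identify $J$ with $X$ locally over $S$, so there would be nothing to resolve and the statement would reduce to the relative Mukai transform. The ordinary double points of $J$ lie precisely over the nodes of $\Delta$, where the fiber of $X$ is the non-integral $I_2$ curve; the exceptional $\P^1$'s of $\widebar J\to J$ parametrize sheaves on these reducible fibers (stable only for a perturbed polarization, with nontrivial multidegree), not ``rank-one torsion-free degree-$0$ sheaves on a fiber'' in your sense. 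Verifying the $\Ext$-orthogonality for exactly these kernel fibers, and constructing a kernel on $X\times_S\widebar J$ that is flat over $\widebar J$ with these prescribed restrictions, is the actual content of C\u{a}ld\u{a}raru's proof; your plan never engages with it. Relatedly, the proposed construction of the kernel by pulling back from $X\times_S J$ and ``taking reflexive hulls across the exceptional locus'' is unjustified: near the singular points of $J$ there is no twisted universal sheaf to extend, and a reflexive hull need not be flat over $\widebar J$ nor have the correct restrictions to the exceptional locus. C\u{a}ld\u{a}raru instead builds the twisted pseudo-universal sheaf directly on $X\times_S\widebar J$ by gluing local universal sheaves (analytically locally on $\widebar J$), with flatness built into the construction, and only then runs the (twisted, analytic) Bridgeland-type criterion.
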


\begin{thm}[Lieblich-Olsson, \cite{MR3429474}]
  In characteristic $p$, if $X$ is a K3 surfaces and $Y$ is a variety such that
  $\D(X)\cong\D(Y)$, then $Y$ is isomorphic to a moduli space of stable sheaves on $X$.
\end{thm}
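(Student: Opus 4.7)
The plan is to follow the classical Mukai--Orlov strategy while navigating the absence of Hodge theory in characteristic $p$. Let $\Phi\colon\D(X)\risom\D(Y)$ be the given equivalence; by Orlov's representability theorem (extended to positive characteristic), $\Phi$ is a Fourier--Mukai transform with some kernel $P\in\D(X\times Y)$. First I would show that $Y$ is a smooth projective surface with trivial canonical bundle: since the Serre functor is intrinsic to the derived category and equals $[2]$ on $\D(X)$, the same holds on $\D(Y)$, forcing $\omega_Y\cong\ms O_Y$ and $\dim Y=2$; smoothness is detected by the homological dimension of $\D(Y)$, and projectivity can be extracted from the existence of a tilting-type generator together with the Serre functor.

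Next, adapting the WIT/IT arguments of Bridgeland--Maciocia to positive characteristic, I would show that after a single homological shift $P$ is quasi-isomorphic to a coherent sheaf $\ms F$ on $X\times Y$ that is flat over $Y$. The key input is that $\Phi(k(y))$ is a simple, point-like object of $\D(X)$ --- meaning $\End=\kappa(y)$, $\Ext^2=\kappa(y)$, $\dim\Ext^1=2$, and other $\Ext$'s vanish --- and a concentration argument then pins each $\Phi(k(y))$ to a single sheaf $F_y$ on $X$. The fibers $\{F_y\}_{y\in Y}$ constitute a flat family of simple sheaves on $X$ with fixed numerical invariants (computable through the crystalline Chern character).

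Choosing an ample polarization $\ms O(1)$ on $X$ adapted to these invariants, I would then argue that each $F_y$ is Gieseker-stable, so that $\ms F$ defines a classifying morphism
$$\psi\colon Y\longrightarrow \cSh^{\stable}_{X/k}(n,L,c)$$
for appropriate invariants $(n,L,c)$ --- or more naturally, a lift to the moduli gerbe $\mSh^{\stable}_{X/k}(n,L,c)$ accommodating the $\m_n$-gerbe structure of Theorem \ref{sec:sheaves-surface}. Since $T_yY\cong\Ext^1(F_y,F_y)\cong T_{[F_y]}\cSh^{\stable}_{X/k}(n,L,c)$, the map $\psi$ is étale; it is injective on geometric points, because an isomorphism $F_y\cong F_{y'}$ would produce an autoequivalence of $\D(X)$ carrying $k(y)$ to $k(y')$; and a dimension-plus-properness count then identifies $Y$ with a connected component of the moduli space.

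The hard part, and what distinguishes positive characteristic from Mukai's original setting, is establishing Gieseker-stability without the complex-analytic or Bridgeland-stability machinery available over $\C$, together with the fact that the coarse moduli space of stable sheaves generally carries no universal family (Theorem \ref{sec:sheaves-surface}). In particular, $\psi$ may actually only factor through a $\m_n$-gerbe over a possibly \emph{twisted} K3 moduli, and this is where the symmetric framework advocated in Sections \ref{sec:motivations}--\ref{sec:stopgap} is crucial: one works throughout with moduli of (possibly twisted) sheaves on gerbes, and the identification with $Y$ remains a bona fide isomorphism after extracting the appropriate cover.
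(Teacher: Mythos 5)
Note first that this survey does not prove the Lieblich--Olsson theorem; it is listed as a signpost, so the comparison must be with the actual argument of \cite{MR3429474}, and your proposal diverges from it in an essential way. The central gap in your sketch is the second step: there is no reason that the kernel $P$ is, up to a single shift, a sheaf flat over $Y$. The ``point-like'' $\Ext$-conditions on $\Phi(k(y))$ (simple, $\Ext^1$ of dimension $2$, Serre duality) do not force concentration in one degree: composing any equivalence with iterated spherical twists produces equivalences of $\D(X)$ sending skyscrapers to complexes with cohomology in several degrees, and the Bridgeland--Maciocia concentration arguments you invoke do not apply to an arbitrary kernel on a product of K3 surfaces. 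Indeed, even over $\C$ the theorem is not proved by showing the given kernel is a sheaf; Mukai--Orlov instead transport the problem to the Hodge-theoretic Mukai lattice, choose a suitable isotropic Mukai vector $v$, construct a moduli space $M_H(v)$ of stable sheaves for a $v$-generic polarization, and identify it with $Y$ via the Torelli theorem --- precisely the tools missing in characteristic $p$. Relatedly, simplicity of the fibers does not give Gieseker stability for any polarization, and you flag this as ``the hard part'' without offering a mechanism; and triviality of $\omega_Y$ plus $\dim Y=2$ does not by itself exclude abelian (or, in small characteristic, other) surfaces --- one needs a derived-invariant cohomological count, which in \cite{MR3429474} comes from the crystalline and $\ell$-adic realizations of the Mukai motive.

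What actually powers Lieblich--Olsson, and is absent from your proposal, is a lifting argument: they show that $X$, $Y$, and the Fourier--Mukai kernel can be deformed compatibly to characteristic zero (deformation theory of kernels, with obstructions compared through the equivalence using the crystalline/de Rham Mukai realizations and a suitable choice of polarized lift), they apply the characteristic-zero theorem of Mukai--Orlov there, and they then specialize the resulting identification of $Y$ with a moduli space of stable sheaves back to characteristic $p$, checking that stability and the moduli interpretation survive specialization (with the gerbe/twisted-sheaf issues you correctly anticipate handled at that stage). So the correct high-level summary is: your outline follows the ``direct'' classical route, which is not how the theorem is proved even over $\C$, and it omits the lifting-to-characteristic-zero mechanism that is the heart of the cited proof; as written, the step producing a flat family of sheaves $\{F_y\}$ and their stability is a genuine gap rather than a technical detail.
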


\begin{thm}[Huybrechts-Stellari, \cite{MR2179782}]
  Given a K3 surface $X$ over $\C$ and a Brauer class $\alpha$, the set of isomorphism
  classes of pairs $(Y,\beta)$ with $Y$ a K3 surface and $\beta\in\Br(Y)$ such
  that $\D(X,\alpha)\cong\D(Y,\beta)$ is finite.
\end{thm}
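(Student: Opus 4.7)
Plan. The strategy parallels the Bridgeland--Maciocia proof of finiteness of Fourier--Mukai partners of an untwisted K3 surface, upgraded to the setting of twisted Mukai lattices. There are three main steps: (i) attach to each $(X,\alpha)$ a weight-two Hodge-lattice that is a derived-categorical invariant; (ii) convert the derived equivalence into a Hodge isometry of these lattices; (iii) deduce finiteness from a lattice-theoretic counting argument.

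First, I would fix a B-field lift $B\in H^2(X,\Q)$ of $\alpha$ (so that $\alpha=\exp(B^{0,2})\in\Br(X)$) and define the twisted Mukai lattice $\widetilde H(X,\alpha,\Z)$ to be $H^0(X,\Z)\oplus H^2(X,\Z)\oplus H^4(X,\Z)$ with the Mukai pairing, equipped with the weight-two Hodge structure whose $(2,0)$-part is spanned by $\exp(B)\wedge\sigma_X$, where $\sigma_X$ is a generator of $H^{2,0}(X)$. The isomorphism class of this Hodge-lattice depends only on $\alpha$. Second, I would invoke the extension of Orlov's representability theorem to the twisted setting (Canonaco--Stellari): any $\C$-linear exact equivalence $\D(X,\alpha)\simto\D(Y,\beta)$ is given by a twisted Fourier--Mukai kernel $\ms E$ on $X\times Y$ with coefficients in $\alpha^{-1}\boxtimes\beta$, and the twisted Mukai vector of $\ms E$ induces a Hodge isometry
\[
\Phi^H_{\ms E}:\widetilde H(X,\alpha,\Z)\simto\widetilde H(Y,\beta,\Z).
\]
Hence it suffices to bound the number of isomorphism classes of $(Y,\beta)$ for which such a Hodge isometry exists.

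Third, I would carry out the lattice-theoretic finiteness step. The transcendental sublattice $T(X,\alpha)\subset\widetilde H(X,\alpha,\Z)$, defined as the smallest primitive Hodge sub-lattice whose complexification contains the $(2,0)$-part, is a Hodge-isometry invariant, so $T(Y,\beta)\cong T(X,\alpha)$ as polarized Hodge structures. A twisted Torelli theorem for K3 surfaces then recovers the pair $(Y,\beta)$ (up to finite ambiguity in the B-field) from the data of a primitive embedding of $T(X,\alpha)$ into the abstract Mukai lattice $\widetilde\Lambda$ of a K3 surface, together with a compatible orientation. Nikulin's theorem on primitive embeddings of an even lattice of fixed discriminant into a fixed even unimodular lattice implies that, modulo the action of $O(\widetilde\Lambda)$, there are only finitely many such embeddings. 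From each embedding one reconstructs $Y$ via the classical Torelli theorem applied to the transcendental period, and $\beta\in\Br(Y)$ is then recovered from the position of the $(2,0)$-line relative to the integral structure on $\widetilde H(Y,\beta,\Z)$.

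The main obstacle is the last point: one has to bound the ambiguity in the B-field. In the untwisted case this step is trivial, but here the B-field is only well-defined modulo $H^2(Y,\Z)+\NS(Y)_\Q$, so a priori a single Hodge isometry class of transcendental lattices could correspond to infinitely many Brauer classes $\beta$. The key input is a discriminant bound: the order of $\beta$ is controlled by the order of $\alpha$ and the discriminants of $T(X,\alpha)$ and $\NS(Y)$, and the latter is itself controlled because the genus of $\NS(Y)$ is forced by that of $T(X,\alpha)$ inside $\widetilde\Lambda$. Combining Nikulin's finiteness with this discriminant control reduces the set of possible $\beta$ for each $Y$ to a finite set, and the preceding steps ensure that only finitely many $Y$ arise.
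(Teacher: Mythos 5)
The paper itself contains no proof of this statement: it sits in the Section~\ref{sec:catalog} catalog, which the author explicitly describes as a list of signposts to the literature, so the only meaningful comparison is with the cited source \cite{MR2179782}. Your sketch is essentially the Huybrechts--Stellari argument itself --- B-field twisted Mukai--Hodge lattice depending only on $\alpha$, reduction of an arbitrary equivalence to a twisted Fourier--Mukai kernel (via Canonaco--Stellari, which also upgrades the original statement from FM-type equivalences to all equivalences), the induced Hodge isometry on twisted Mukai lattices, and a Bridgeland--Maciocia/Nikulin-style counting in which the discriminant relation $\mathrm{ord}(\beta)^{2}\mid \mathrm{disc}\,T(X,\alpha)$ bounds the Brauer ambiguity --- so it matches the intended proof in approach and in the key finiteness mechanism.
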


\subsection{Results related to the geometry of
  moduli spaces}
\label{sec:fm-res}

\begin{thm}[Yoshioka, \cite{MR2306170}]\label{T:yosh}
  The moduli space of stable twisted sheaves on a complex K3 surface $X$ is an
  irreducible symplectic manifold deformation equivalent to a Hilbert scheme of
  points on $X$. 
\end{thm}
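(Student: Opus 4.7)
The plan is to carry out the Mukai--O'Grady--Huybrechts program for untwisted moduli on a K3 surface on the $\m_n$-gerbe $\ms X\to X$ representing $\alpha$. Fix a primitive twisted Mukai vector $v$ and a $v$-generic polarization, so that every $\alpha$-twisted semistable sheaf with Mukai vector $v$ is actually stable. Then the stable locus $\mSh^\stable_{\ms X/\C}(v)$ is a $\G_m$-gerbe over a coarse moduli space $M := M_X(\alpha,v)$, and $M$ is the candidate irreducible symplectic manifold.

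The local theory works just as in the untwisted case. Deformations of an $\alpha$-twisted sheaf $E$ on $\ms X$ are controlled by $\Ext^\bullet_{\ms X}(E,E)$, with tangent space $\Ext^1(E,E)$ and obstructions in the trace-free $\Ext^2(E,E)$. Stability implies simplicity, and Serre duality on $\ms X$ --- which is Calabi--Yau because $\ms X\to X$ is \'etale --- gives $\Ext^2(E,E)\cong\Ext^0(E,E)^\vee\cong\C$, so all obstructions are trace. Hence $M$ is smooth of the expected dimension $\langle v,v\rangle+2$. Projectivity is obtained by a twisted GIT construction (Simpson/Langer applied to a rigidification of $\mSh^\semistable_{\ms X/\C}(v)$). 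Mukai's construction of a symplectic form globalizes: the composition
$$\Ext^1(E,E)\otimes\Ext^1(E,E)\to\Ext^2(E,E)\xrightarrow{\Tr}\H^2(\ms X,\ms O)\cong\C$$
is closed and non-degenerate, producing a holomorphic symplectic $2$-form $\sigma$ on $M$.

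To identify the deformation type, I would reduce to the Hilbert scheme case in two steps. First, if $v$ admits an isotropic partner $w$ in the twisted Mukai lattice with $\langle v,w\rangle=1$, build a twisted Fourier--Mukai kernel from the universal object and use it to exhibit an equivalence $\D(\ms X,\alpha)\simto\D(\ms Y,\beta)$ sending the Mukai vector $v$ to $(1,0,1-n)$; the corresponding moduli space is $\Hilb^n(Y)$, which Beauville showed is irreducible symplectic. Second, for general $(X,\alpha,v)$ deform in the moduli of polarized K3 surfaces decorated with a Brauer class and a primitive twisted Mukai vector; the locus admitting such a partner is dense in this twisted period domain, and deformation-invariance of the irreducible symplectic deformation type transports the identification back to the original fiber.

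The hard part is this last step. One must control the behavior of stability and projectivity along the deformation (so that $M$ stays smooth and projective), check that the $\m_n$-gerbe and its twisted Mukai vector extend in families of polarized K3 surfaces, and --- most delicately --- establish irreducibility of $M$ before invoking deformation-invariance, so that the family is a smooth deformation of irreducible holomorphic symplectic manifolds and not a disjoint union of such. Once these are in place, the Fujiki constant and the Beauville--Bogomolov form of $M$ match those of $\Hilb^n$, and the rigidity of deformation type for irreducible symplectic manifolds finishes the proof.
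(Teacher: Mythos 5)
This statement is not proved in the paper at all: it sits in the ``catalog of results'' section, which the author explicitly describes as a list of impressionistic signposts, and the proof lives in Yoshioka's cited paper \cite{MR2306170}. So there is no in-paper argument to compare against; what I can do is assess your sketch against the known strategy. In broad outline you have reconstructed it correctly: realize the twist by a $\m_n$-gerbe (or Azumaya/projective-bundle data), get smoothness and the symplectic form from $\Ext^\bullet(E,E)$, Serre duality and the trace map exactly as in Mukai's untwisted argument, get projectivity from a GIT construction, and then pin down the deformation type by a Fourier--Mukai reduction to a Hilbert scheme combined with deforming the triple (surface, Brauer class, Mukai vector). That is essentially Yoshioka's route.

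As a proof, however, the proposal stops exactly where the theorem's content begins. The final paragraph defers (i) non-emptiness, smoothness and properness of the relative moduli space over the deformation of $(X,\alpha,v)$, (ii) the density in the twisted period domain of triples admitting an isotropic partner $w$ with $\langle v,w\rangle=1$, and (iii) irreducibility --- and these are precisely the delicate points Yoshioka has to work for; without them the argument is a plan, not a proof. Two specific corrections: first, the existence of a partner with $\langle v,w\rangle=1$ is a strong fineness-type condition that certainly fails for the original $(X,\alpha,v)$ in general, so your density claim needs an actual argument (it is a countable-union-of-hypersurfaces statement in the twisted period domain, and you must also check stability is preserved under the resulting twisted Fourier--Mukai equivalence, which is not automatic); second, you do not need to ``establish irreducibility of $M$ before invoking deformation-invariance'' --- once the relative moduli space is smooth and proper over a connected base, Ehresmann's theorem transports connectedness from the special fiber (the Hilbert scheme side) to the original fiber, and this is how the circularity you worry about is actually broken. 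Finally, the statement as quoted suppresses the hypotheses (primitive twisted Mukai vector, $v$-generic polarization, positive expected dimension) that you correctly reinstate; without them the theorem as literally stated is false, which is worth flagging when citing it.
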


\begin{thm}[Lieblich, \cite{MR2309155}]
  The stack of semistable twisted sheaves with positive rank and fixed determinant on a curve is
  geometrically unirational. The stack of semistable twisted sheaves with
  positive rank, fixed determinant, and sufficiently large second Chern class on
  a surface is geometrically integral, and lci, with singular locus of high codimension. 
\end{thm}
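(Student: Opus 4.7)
The plan is to adapt Serre's unirationality trick used to prove Proposition \ref{P:unirational-curve-classical} and O'Grady's boundary-induction underlying Theorem \ref{sec:sheaves-surface-1} to the setting of sheaves on a $\m_n$-gerbe $\ms X \to X$ representing the given Brauer class. A twisted sheaf of rank $r$ in a class of order dividing $n$ is precisely a sheaf on $\ms X$ on which the inertia acts through the canonical character, so the relevant stacks are stacks of sheaves on these gerbes satisfying the usual semistability condition; the goal is to produce a smooth surjection from an affine space in the curve case, and to run an asymptotic induction in $c_2$ in the surface case.

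For the curve case, fix a twisted locally free sheaf $W_0$ on $\ms C$ of minimum rank $r_0 = \ind(\alpha)$ (existence follows from Tsen's theorem, since $\per = \ind$ on a curve, and any representative Azumaya algebra admits a simple module $W_0$). Given a semistable twisted sheaf $F$ of rank $r = k r_0$ with fixed determinant $L$, I would realize $F$ as a general extension
\[
0 \to W_0^{\oplus (k-1)} \to F(m) \to Q \to 0
\]
for $m \gg 0$, where $Q$ has rank $r_0$. The proof of Lemma \ref{L:bertini-sketch} then transfers: for $m$ large the restriction $\Gamma(\ms C, \shom(W_0, F(m))) \to \shom(W_0, F(m))\otimes\kappa(c)$ is surjective at every closed point $c$, and the codimension count showing that the non-maximal-rank locus in the affine space of $(k-1)$-tuples of $r_0$-dimensional sections has codimension at least two is formally identical to the classical one. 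Stratifying by the isomorphism class of the quotient $Q$ and running the universal extension over $\Ext^1(Q_{\mathrm{univ}}, W_0^{\oplus(k-1)}) \cong \A^N$ produces a representable smooth cover of $\mSh^\semistable_{\ms C/k}(r, L)$ by affine space, giving both algebraicity and unirationality.

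For the surface case, port O'Grady's boundary-induction verbatim. The crucial ingredients are: (i) reflexive-hull formation is \'etale-local and preserves inertia weights, so for a torsion-free twisted sheaf $V$ the canonical sequence $0 \to V \to V^{\vee\vee} \to T \to 0$ places $V^{\vee\vee}$ in a twisted moduli space with strictly smaller $c_2$, and the boundary of $\mSh^\semistable_{\ms X/k}(r, L, c)$ is fibered over spaces of locally free twisted sheaves with $c_2 - \operatorname{length}(T)$; (ii) twisted locally free sheaves of arbitrarily large $c_2$ can be produced by elementary modifications of a fixed pivot $W_0$ at points; (iii) deformation-obstruction theory is computed by $\Ext^i(V,V)$ on $\ms X$, whose Serre-duality and dimension estimates are formally identical to the classical case on $X$. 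O'Grady's induction then yields geometric integrality, the LCI property, and the required codimension bound on the singular locus, asymptotically in $c$.

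The main obstacle will be verifying the transversality and existence inputs cleanly on the gerbe. In particular, the Bertini step on $\ms C$ must separate geometric fibers on $\ms C$ itself, which is what forces $W_0$ to have inertia weight one and rank exactly $\ind(\alpha)$ rather than being a direct sum of copies of $\ms O$. In the surface case, constructing twisted locally free sheaves with every sufficiently large $c_2$ and the prescribed determinant on $\ms X$ is more delicate than for curves because $\per$ and $\ind$ need not coincide; one must also keep track of the determinant as a line bundle on $\ms X$ rather than on $X$ throughout the induction. Modulo these technicalities, the classical arguments transfer essentially without change.
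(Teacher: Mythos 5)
Your surface half is essentially the paper's own route: the double-dual (reflexive hull) hierarchy in $c_2$, elementary modifications to populate and connect strata, and $\Ext$-based deformation theory feeding an O'Grady-style induction is exactly the sketch given for Theorem \ref{T:exampl-sheav-surf}, and the caveats you flag (nonemptiness when $\per\neq\ind$, keeping the determinant on the gerbe) are the right ones. The curve half, however, has a genuine gap as written. If you insist on a pivot $W_0$ of rank $r_0=\ind(\alpha)>1$, then the cokernel $Q$ of a general map $W_0^{\oplus(k-1)}\to F(m)$ is a rank-$r_0$ twisted sheaf whose determinant is pinned down but whose isomorphism class genuinely varies; so your parameter space is not a single affine space $\A^N$ but an $\Ext^1$-bundle over the moduli of candidate $Q$'s, and ``stratifying by the isomorphism class of $Q$'' makes the unirationality claim circular — you would need to already know that the stack of rank-$r_0$ twisted sheaves with fixed determinant is unirational, which is an instance of the statement being proved. (Also, ``$\per=\ind$ on a curve'' does not follow from Tsen over a general base field; it is only available — and trivial — after passing to the algebraic closure.)

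The repair is precisely the observation the paper uses (Theorem \ref{T:exampl-sheav-curve}): the claim is \emph{geometric}, so one may work over $\widebar k$, where Tsen's theorem kills the Brauer class of the curve; the associated $\G_m$-gerbe then splits, an invertible twisted sheaf $W_0$ exists, and tensoring with $W_0^{-1}$ (as in Lemma \ref{lem:sheaves-simplespaces}) identifies the stack of twisted sheaves with fixed determinant with the classical stack of sheaves with a shifted fixed determinant, after which one simply quotes Proposition \ref{P:unirational-curve-classical} and openness of the semistable locus. Equivalently, in your construction one takes $r_0=1$, so that $Q\cong L(rm)\otimes W_0^{-(r-1)}$ is a \emph{fixed} invertible twisted sheaf and the extension space is again a single affine space; at that point your argument is the classical Serre trick transported by $W_0$, i.e.\ the same reduction in different clothing. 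So the curve case does not need a new Bertini argument on the gerbe — it needs the splitting step, which is the one ingredient your write-up omits.
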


\begin{thm}[Lieblich, \cite{1507.08387}]
  The Ogus moduli space of supersingular K3 surfaces is naturally covered by
  rational curves. Moreover, a general point is verifiably contained in
  countably many pairwise distinct images of $\A^1$.
\end{thm}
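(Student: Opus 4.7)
The plan is to realize rational curves in the Ogus moduli space by allowing a Brauer class on a fixed supersingular K3 surface to vary in a positive-dimensional algebraic family and taking the corresponding moduli space of twisted sheaves at each point. First I would exploit the fact that, for a supersingular K3 surface $X/k$ in characteristic $p$ with Artin invariant at least $2$, the moduli of $\m_p$-gerbes on $X$ (equivalently the appropriate flat cohomology functor $T\mapsto \H^2(X\times T,\m_p)$) is representable by an algebraic group scheme $\ms B$ with positive-dimensional identity component. Concretely this reflects the fact that the formal Brauer group of a supersingular K3 is isomorphic to $\widehat{\G}_a$, so $\ms B$ admits many non-constant morphisms $\A^1 \to \ms B$. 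Pulling back the universal gerbe along such a morphism yields a family of $\m_p$-gerbes $\ms X_t \to X$ parametrized by $t\in\A^1$, with non-trivially varying Brauer class $\alpha_t$.

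Next, fix a Mukai vector $v$ adapted to the $\m_p$-twisting and satisfying the hypotheses of a twisted Yoshioka-type theorem (cf.\ Theorem \ref{T:yosh}): for each $t$, the moduli space $M_t := M(\ms X_t, v)$ of $v$-stable $\ms X_t$-twisted sheaves is a K3 surface which, inheriting supersingularity from $X$, defines a point of the Ogus moduli space $M_{\mathrm{Ogus}}$. Because the construction is relative over $\A^1$, these assemble into a smooth projective family $M \to \A^1$ and a morphism $\phi\colon \A^1 \to M_{\mathrm{Ogus}}$, whose image is the desired rational curve provided $\phi$ is non-constant.

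The crucial step is verifying non-constancy of $\phi$. I would compute the K3 crystal $\H^2_{\mathrm{cris}}(M_t)$ as the orthogonal complement of $v$ inside the \emph{twisted} crystalline Mukai lattice of $(X,\alpha_t)$. Varying $\alpha_t$ along a generic $\A^1 \to \ms B$ then produces a non-trivial deformation of this lattice, and hence a non-trivial motion of the period of $M_t$ in the Ogus period domain. To cover the Ogus moduli space, I would allow both $X$ and $\alpha$ to vary: the moduli problem of pairs $(X,\alpha)$ admits a dominant morphism to $M_{\mathrm{Ogus}}$ via $(X,\alpha)\mapsto M(\ms X,v)$, and is itself swept out by the rational curves constructed above. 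For the countability assertion, observe that a general K3 $Y$ in the Ogus space admits only countably many presentations $Y \cong M(\ms X, v)$ (as $(X,v)$ varies over its countable set of equivalence classes of presentations), but for each such presentation the construction above produces a distinct $\A^1$ through $Y$. The hardest step is the period calculation, which requires the full twisted crystalline Mukai machinery---a precise comparison between twisted crystalline Chern classes on $X$ and the K3 crystal of the moduli space---and is the main technical content of \cite{1507.08387}.
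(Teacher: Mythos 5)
This theorem is only catalogued in the paper---no proof appears beyond the citation of \cite{1507.08387}---but your outline reproduces the strategy of that reference: a positive-dimensional ($\G_a$-type) family of $\m_p$-gerbes on a fixed supersingular K3, coming from representability of fppf $\H^2(-,\m_p)$ and Artin's identification of the formal Brauer group with $\widehat{\G}_a$, whose relative moduli spaces of twisted sheaves sweep out a non-isotrivial family of supersingular K3 surfaces over $\A^1$ (a ``twistor line''), with non-constancy detected by the crystalline twisted Mukai lattice and countability coming from the countably many choices of presentation $(X,\alpha,v)$. The one caution is that the Yoshioka-type input cannot simply be quoted from the complex setting of Theorem \ref{T:yosh}: for $\m_p$-twisted sheaves in characteristic $p$, the nonemptiness, smoothness, properness, K3-ness and supersingularity of these moduli spaces---and the marking needed to even define the map $\A^1\to$ Ogus space---are precisely what the cited paper must establish, which you partly, but only partly, acknowledge.
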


\subsection{Results related to non-commutative algebra}
\label{sec:non-comm-alg}

\begin{thm}[Gabber]
  If $X$ is a quasi-compact separated scheme with ample invertible sheaf then $\Br(X)=\Br'(X)$.
\end{thm}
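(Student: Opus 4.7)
The approach is to reformulate via twisted sheaves on gerbes, in the spirit of Section \ref{sec:another-ex}. Given $\alpha\in\Br'(X)\defeq\H^2(X,\G_m)_{\text{tors}}$, let $n$ be its order. The Kummer sequence $1\to\m_n\to\G_m\xrightarrow{n}\G_m\to 1$ yields a surjection $\H^2(X,\m_n)\surj\H^2(X,\G_m)[n]$, so $\alpha$ lifts to a class $[\ms X]\in\H^2(X,\m_n)$ corresponding to a $\m_n$-gerbe $\pi:\ms X\to X$. If I can exhibit a locally free $\chi$-twisted sheaf $V$ on $\ms X$ of positive rank (where $\chi:\m_n\hookrightarrow\G_m$ is the tautological character), then $\send(V)=V^\vee\otimes V$ has trivial inertia action and descends via $\pi$ to an Azumaya algebra $\ms A$ on $X$ with $[\ms A]=\alpha$ in $\Br(X)$. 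The theorem thus reduces to producing such a $V$.

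To construct $V$, I would use the ample invertible sheaf $L$ on $X$ to produce a finite flat surjection $f:Y\to X$ with $f^\ast\alpha=0$ in $\H^2(Y,\G_m)$; concretely, take $Y$ to be a suitable finite cover arising from a complete intersection or a cyclic covering inside a projective embedding given by a sufficiently high power of $L$, arranged so that the cohomological obstruction $\alpha$ dies after restriction to $Y$. This is the step where the ampleness hypothesis enters essentially, as it furnishes enough global sections to build such covers.

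Given such a cover, the pulled-back gerbe $\ms X_Y\defeq\ms X\times_X Y$ admits a section over $Y$ (since $f^\ast\alpha=0$), so its structure sheaf $\ms O_{\ms X_Y}$ is a locally free $\chi$-twisted sheaf of rank one on $\ms X_Y$. Let $f':\ms X_Y\to\ms X$ be the base change of $f$; since $f'$ is finite flat of degree $d\defeq\deg(f)$, the pushforward $V\defeq f'_{\ast}\ms O_{\ms X_Y}$ is a locally free $\chi$-twisted sheaf on $\ms X$ of rank $d$, supplying the required object and hence an Azumaya algebra with Brauer class $\alpha$.

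The principal obstacle is the construction of the splitting cover $f:Y\to X$ with controlled degree: all the geometric input -- the only place where one uses more than quasi-compactness and separation -- is concentrated here, and the ample invertible sheaf is the decisive tool for producing it. Once that cover is in hand, the remainder of the argument is a formal manipulation of twisted sheaves, finite flat descent, and the Skolem--Noether formalism recalled in Section \ref{sec:another-ex}.
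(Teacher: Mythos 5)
The paper does not actually prove this statement: it appears in the ``catalog of results'' section as a signpost (the only written account is de Jong's exposition of Gabber's unpublished argument), so the comparison here is with that known proof. Your opening reduction is the right one and matches the twisted-sheaf formulation the paper advocates: lift $\alpha$ to a class of a $\m_n$-gerbe $\ms X\to X$ via the Kummer sequence, and observe that a locally free $1$-twisted sheaf $V$ of positive rank on $\ms X$ gives $\send(V)$, which is $0$-twisted, descends to an Azumaya algebra on $X$, and represents $\alpha$. Up to that point the proposal is sound.

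The genuine gap is the step you yourself flag as the ``principal obstacle'': the existence of a finite flat surjection $f:Y\to X$ with $f^\ast\alpha=0$. You offer no construction --- ``a complete intersection or cyclic covering arranged so that $\alpha$ dies'' is not something ampleness furnishes: cyclic covers split only very special classes (those of cyclic-algebra type coming from cup products), and there is no mechanism making an arbitrary torsion class in $\H^2(X,\G_m)$ vanish on a complete-intersection-style cover of a possibly singular, non-reduced, merely quasi-compact separated scheme. Worse, the existence of such a finite locally free splitting cover is essentially equivalent to the theorem you are proving: given the cover, pushing forward an invertible twisted sheaf produces the desired $V$ (so $\alpha\in\Br(X)$), while conversely the known ways to manufacture finite flat splitting covers start from an Azumaya algebra representing $\alpha$ --- which is exactly what is not yet available. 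So the proposal reduces the theorem to an unproved statement at least as strong, and the actual proof does not go this way: de Jong/Gabber construct a positive-rank twisted sheaf directly, by a delicate gluing of twisted sheaves over pieces of $X$ cut out by sections of powers of the ample invertible sheaf. A smaller slip: even granting $f^\ast\alpha=0$, the structure sheaf $\ms O_{\ms X_Y}$ is a $0$-twisted sheaf, not a $\chi$-twisted one; what the vanishing of $f^\ast\alpha$ gives is the existence of \emph{some} invertible $1$-twisted sheaf on $\ms X_Y$, and it is that sheaf you would have to push forward along $\ms X_Y\to\ms X$.
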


\begin{thm}[de Jong, \cite{MR2060023}]
  If $K$ is a field of transcendence degree $2$ over an algebraically closed
  field, then for all $\alpha\in\Br(K)$, we have $\ind(\alpha)=\per(\alpha)$.
\end{thm}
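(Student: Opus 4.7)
The plan is to recast the purely algebraic period-index question as an assertion about rational points on a moduli space of twisted sheaves, and then to apply the twisted version of O'Grady's irreducibility theorem (stated earlier as the second theorem of Lieblich in Section \ref{sec:fm-res}) to produce such a point. First, I would replace $K$ by $k(X)$ for a smooth projective surface $X/k$, and spread $\alpha$ out to a $\G_m$-gerbe $\pi:\ms X\to U$ over a dense open $U\subset X$ (one can in fact take $U=X$ using Gabber's purity, but this is not essential). The key translation is the standard fact that $\ind(\alpha)$ equals the minimum rank of a locally free $\ms X$-twisted sheaf at the generic point. Since $\per(\alpha)\mid\ind(\alpha)$ holds automatically, setting $n=\per(\alpha)$ reduces the problem to producing a locally free $\ms X$-twisted sheaf of rank $n$.

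The next step is to pass to moduli. A rank-$n$ twisted sheaf has a determinant which is a line bundle on $\ms X$ of weight $n$; such a line bundle exists precisely when $n[\alpha]=0$ in $\H^2(X,\G_m)$, which by choice of $n$ holds. Fixing such a determinant $L$ and a second Chern class $c$, I would consider the stack $\mSh^{\semistable}_{\ms X/k}(n,L,c)$ of semistable $\ms X$-twisted sheaves of rank $n$, determinant $L$, and second Chern class $c$. By the twisted analogue of O'Grady's theorem, for $c$ large enough this stack is geometrically integral and lci, its coarse space $\cSh^{\semistable}_{\ms X/k}(n,L,c)$ is a geometrically integral projective $k$-scheme, and (by dimension counts together with the fact that the torsion-free locus specialises to the boundary locally free locus in lower Chern class) its locally free locus is a dense open substack.

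The remainder of the argument is then immediate: any non-empty geometrically integral variety over the algebraically closed field $k$ has a $k$-point, and such a point on the locally free locus yields a locally free $\ms X$-twisted sheaf $V$ of rank $n$. Then $\send(V)$ is an Azumaya algebra of degree $n$ over $U$ representing $\alpha$; restricting to the generic point produces a central simple $K$-algebra of degree $n$ in the class $\alpha$, so $\ind(\alpha)\mid n=\per(\alpha)$, which combined with the automatic divisibility in the other direction gives $\ind(\alpha)=\per(\alpha)$.

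The main obstacle is non-emptiness of the moduli stack at rank equal to $\per(\alpha)$ for some $c$; this is where the argument genuinely depends on the surface hypothesis rather than formal manipulation. Concretely, one first produces an initial locally free twisted sheaf of some rank $d$ (for instance from any Azumaya algebra in the class $\alpha$ on an open of $X$, which exists by the definition of $\Br(K)$ together with spreading out), and then manipulates rank and second Chern class by elementary modifications along curves together with twists by very ample line bundles, driving the rank down to exactly $n$ at the cost of allowing $c$ to grow. Arranging this descent to land in the semistable locus while maintaining the prescribed determinant of the correct weight is the delicate step, and is precisely what fails over fields of higher transcendence degree, where no such theorem can hold.
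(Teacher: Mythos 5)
Your reduction of the statement to producing a locally free (or even just torsion-free) $1$-sheaf of rank $n=\per(\alpha)$ on a gerbe over the surface is fine, but the way you then propose to finish is circular at exactly the decisive point. The generic fiber of any nonzero torsion-free $\alpha$-twisted sheaf is a module over the division algebra underlying $\alpha$, so its rank is automatically a multiple of $\ind(\alpha)$. Consequently, non-emptiness of $\mSh^{\semistable}_{\ms X/k}(n,L,c)$ for some $c$ with $n=\per(\alpha)$ is \emph{equivalent} to the divisibility $\ind(\alpha)\mid\per(\alpha)$ you are trying to prove; no theorem about the geometry of the stack (O'Grady-type integrality, lci-ness, density of the locally free locus) can create points of a stack that may be empty, and over an algebraically closed field the integrality is not even needed, since any nonempty finite-type stack over $k=\widebar k$ has a $k$-point. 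Worse, the mechanism you offer for non-emptiness does not work: elementary modifications along curves and twists by ample invertible sheaves change $c_2$ and the determinant but never the rank, so they cannot ``drive the rank down'' from $d$ to $n$; indeed no sheaf-theoretic manipulation can, precisely because every torsion-free twisted sheaf has generic rank divisible by $\ind(\alpha)$. This is exactly the obstruction that limits the finite-field result of \cite{MR3418522} to $\ind\mid\per^2$: there non-emptiness could only be arranged in rank $\per^2$, which, as noted in Section \ref{sec:per-ind}, ``is the explanation for the $\ell^2$.''

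Both known arguments inject a genuinely different idea at this point, and your proposal contains neither. De Jong's original proof \cite{MR2060023} lowers the index by deformation theory of Azumaya algebras (with controlled degenerations), not by moduli of sheaves on the surface. The proof sketched in this paper sidesteps the surface moduli space entirely: fiber $X$ over $\P^1$ and view $\alpha$ as defining a smooth proper curve-{\simplespace} $C$ over $k(t)$. Over a splitting cover (which exists by Tsen's theorem) the stack of rank-$n$ twisted sheaves with fixed determinant becomes the classical stack of bundles on a curve, so it is automatically nonempty with integral, rational geometric fibers (Theorem \ref{T:exampl-sheav-curve}); then Graber--Harris--Starr \cite{MR1981034} together with the vanishing of $\Br(k(t))$ produces an object over $k(t)$ (Corollary \ref{C:exampl-sheav-curve}), i.e.\ a rank-$\per(\alpha)$ twisted sheaf at the generic point of $X$. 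The fibration trick is what makes non-emptiness automatic, because the gerbe splits geometrically over the curve --- exactly what fails for the gerbe over the surface itself, where $\alpha$ remains nontrivial after any base change of the algebraically closed ground field. To salvage your route you would need an independent construction of a torsion-free twisted sheaf of generic rank $\per(\alpha)$ on the surface gerbe, and that is the theorem, not a technical lemma.
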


\begin{thm}[Lieblich, \cite{MR3418522}]
  If $K$ is a field of transcendence degree $2$ over a finite field then for all
  $\alpha\in\Br(K)$ we have $\ind(\alpha)|\per(\alpha)^2$.
\end{thm}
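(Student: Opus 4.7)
The plan is to produce an Azumaya algebra of degree $n^2 := \per(\alpha)^2$ on a smooth projective model of $K$ and then restrict to $\Spec K$. By purity for the Brauer group on regular surfaces, combined with resolution of singularities for surfaces over $\F_q$, after possibly blowing up I may assume $\alpha$ is the restriction to $\Spec K$ of a class (still denoted $\alpha$) of period $n$ on a smooth projective surface $X$ over $\F_q$ with $K = k(X)$. Let $\pi:\ms X \to X$ be the $\m_n$-gerbe whose cohomology class is $\alpha$. For any locally free $\ms X$-twisted sheaf $V$ of rank $r$ and inertial weight $1$, the endomorphism algebra $\send(V)$ descends to an Azumaya algebra on $X$ of degree $r$ in the Brauer class $\alpha$; so it suffices to produce such a $V$ with $r = n^2$, which restricts to an Azumaya algebra over $K$ of degree $n^2$ in class $\alpha$ and yields $\ind(\alpha) \mid n^2$.

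The mechanism is moduli of twisted sheaves. Consider the stack $\ms M := \mSh^{\semistable}_{\ms X/\F_q}(n^2, L, c)$ of semistable $\ms X$-twisted sheaves of rank $n^2$, fixed determinant $L$, and second Chern class $c$. By the surface half of the Lieblich twisted O'Grady-type irreducibility theorem quoted above, for $c$ sufficiently large the coarse space $M := \cSh^{\semistable}_{\ms X/\F_q}(n^2, L, c)$ is geometrically integral and lci, with singular locus of arbitrarily high codimension. Nonemptiness of $M_{\bar\F_q}$ follows from de Jong's theorem applied to $\bar K := \bar\F_q \cdot K$: de Jong produces a rank-$n$ $\ms X_{\bar K}$-twisted sheaf, which after spreading out to a dense open of $X_{\bar\F_q}$, reflexifying, and tensoring with a locally free rank-$n$ untwisted sheaf yields a geometric point of $M$. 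Thus $M$ is a geometrically integral projective scheme over $\F_q$.

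To produce an $\F_q$-point of $M$, I would apply Esnault's theorem: any smooth projective geometrically rationally chain connected variety over $\F_q$ has an $\F_q$-point. The twisted surface analog of the framed covers in Lemma \ref{L:bertini-sketch} and the proof of Proposition \ref{P:unirational-curve-classical} realizes an open substack of $\ms M$ as the quotient of an open subscheme of an affine space by an algebraic group, which will give the necessary rational chain connectedness after passing to a smooth projective model of $M$. An $\F_q$-point of $M$ so obtained lifts to an object of $\ms M$: the morphism $\ms M \to M$ is a $\m_{n^2}$-gerbe whose residual obstruction lies in $\H^2(\F_q, \m_{n^2}) \subseteq \Br(\F_q) = 0$, so the gerbe is split at every $\F_q$-point. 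The resulting rank-$n^2$ twisted sheaf $V$ yields $\send(V)$ and the desired index bound.

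The main obstacle will be establishing rational chain connectedness of a smooth projective model of $M$. Moduli of sheaves on surfaces are often emphatically not rationally connected --- on a K3 surface they are irreducible holomorphic symplectic by Theorem \ref{T:yosh} --- and it is essential to work with rank $n^2$ rather than $n$ in order to engineer enough affine-space framings in the twisted $\Quot$-scheme construction to force the rational-connectedness input to Esnault. Should the direct route fall short, a fallback is to induct on the dimension of $X$ by restriction to a Lefschetz pencil of curves, reducing to the curve case where Brauer groups of smooth projective curves over finite fields vanish and the necessary existence statements for twisted sheaves follow directly from the unirationality of $\mSh^{\lf}_{C/k}(n,L)$.
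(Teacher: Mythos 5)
Your overall framing (pass to a smooth projective model, take the $\m_n$-gerbe $\ms X\to X$, and produce a locally free twisted sheaf $V$ of rank $n^2$ so that $\send(V)$ gives the index bound, using the asymptotic geometric integrality of the moduli space of semistable twisted sheaves, with non-emptiness seeded by de Jong's theorem over $\bar\F_q$) matches the paper's strategy. But the step where you actually extract a point is where the argument breaks, and it is also where you diverge from the paper. You propose to get an honest $\F_q$-point of the coarse space $M$ from Esnault's theorem, which requires $M$ (or a smooth projective model of it) to be geometrically rationally chain connected. There is no reason for this to hold and no known mechanism to force it: as the paper itself notes in Section \ref{sec:surface}, the Serre-trick framing argument that gives unirationality of $\mSh^{\lf}_{C/k}(n,L)$ fails for surfaces because the degeneracy locus of a general map $\ms O^{n-1}\to\ms O^n$ has codimension $2$, and moduli of sheaves on surfaces are in general very far from rationally connected (your own K3 example). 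Raising the rank to $n^2$ does not ``engineer'' rational connectedness; in the actual argument the rank $n^2$ is there for non-emptiness of the moduli problem, not for its birational geometry. Your fallback via a Lefschetz pencil also does not work as stated: the generic fiber is a curve over $\F_q(t)$, not over a finite or algebraically closed field, so neither the vanishing of Brauer groups of curves over finite fields nor the Graber--Harris--Starr/ de Jong--Starr input from Corollary \ref{C:exampl-sheav-curve} (which needs $k(t)$ with $k$ algebraically closed) applies.

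The paper's proof needs much less than a rational point. Since $\ind(\alpha)$ is a power of $\ell$ (after the standard reduction to prime period $\ell$, which you should also make explicit), it suffices to produce the rank-$\ell^2$ twisted sheaf after a base field extension of degree prime to $\ell$: restriction--corestriction then gives $\ind(\alpha)\mid[L:K]\cdot\ell^2$ with $[L:K]$ prime to $\ell$, hence $\ind(\alpha)\mid\ell^2$. And a geometrically integral locally closed substack of the moduli space of finite type over $\F_q$ (Theorem \ref{T:exampl-sheav-surf}) has, by Lang--Weil, rational points over $\F_{q^m}$ for all sufficiently large $m$, in particular for infinitely many $m$ prime to $\ell$; lifting from the coarse space to the stack costs nothing because $\H^2$ of a finite field with coefficients in $\m_r$ or $\G_m$ vanishes. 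So the correct fix is to delete the rational connectedness/Esnault step entirely and replace it with Lang--Weil plus restriction--corestriction; with that substitution your outline coincides with the paper's argument, with the remaining (nontrivial, but acknowledged) work being the non-emptiness and determinant/semistability bookkeeping you sketch in your second paragraph.
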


\begin{thm}[Krashen-Lieblich, \cite{MR2428144}]
  Given a field $k$, a smooth proper geometrically connected curve $X$ over $k$,
  and a Brauer class $\beta\in\Br(k)$, the index of $\beta$ restricted to $k(X)$
  can be computed in terms of the restriction of the universal obstruction over
  the moduli space of stable vector bundles on $X$. 
\end{thm}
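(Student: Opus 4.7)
The plan is to express $\ind(\beta|_{k(X)})$ as the smallest integer $n$ for which $\beta \in \Br(k)$ arises as a value of the universal obstruction map on the $k$-points of the moduli space $M_{n,L} := \cSh^\stable_{X/k}(n, L)$ for some invertible sheaf $L$ on $X$.

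First I would translate the index geometrically: $\beta|_{k(X)}$ has index dividing $n$ if and only if there is a $\beta|_{k(X)}$-twisted $k(X)$-vector space of dimension $n$, and such a generic object is the generic fiber of its pushforward to $X$, namely a $\beta$-twisted torsion-free coherent sheaf on $X$ over $k$ of generic rank $n$. Standard modifications (reflexivization, passing to subsheaves, tensoring with line bundles) let one improve such a sheaf to a stable $\beta$-twisted bundle of rank $n$ and some determinant $L$ of sufficiently large degree. Hence $\ind(\beta|_{k(X)})$ is the smallest $n$ such that a stable $\beta$-twisted bundle on $X$ of rank $n$ exists for some $L$.

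Next I would compare the moduli of $\beta$-twisted stable bundles to the untwisted ones. The moduli stack of stable $\beta$-twisted bundles of rank $n$ and determinant $L$ is a gerbe over a projective coarse space, which by the twisted unirationality result of \cite{MR2309155} is geometrically unirational (the twisted analogue of Proposition~\ref{P:unirational-curve-classical}); in fact it is a twisted form of $M_{n,L}$. Tracking gerbe classes through this twist, the gerbe class of the twisted moduli differs from the universal obstruction $\omega \in \Br(M_{n,L})$ of Theorem~\ref{sec:sheaves-curve}(4) by the pullback $\beta|_{M_{n,L}}$. Consequently, a $k$-point $p \in M_{n,L}(k)$ lifts to a stable $\beta$-twisted bundle on $X$ if and only if $\omega(p) = \beta$ in $\Br(k)$. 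This matches the Giraud-theoretic meaning of $\omega$: it measures the Brauer obstruction to descending a $\bar k$-point of the moduli to a bundle on $X$ defined over $k$, and a descent obstruction of exactly $\beta$ produces a $\beta$-twisted bundle on the nose.

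Combining the two steps gives
\[
\ind(\beta|_{k(X)}) = \min\bigl\{n : \exists L \text{ and } p \in M_{n,L}(k) \text{ with } \omega(p) = \beta\bigr\},
\]
expressing the index in terms of the restriction of the universal obstruction. Abundance of $k$-points on $M_{n,L}$---needed for the image of $\omega$ on $M_{n,L}(k)$ to realize $\beta$---follows from the unirationality of $M_{n,L}$ (Proposition~\ref{P:unirational-curve-classical}) together with Graber-Harris-Starr (when $k$ is a function field over an algebraically closed field) or Esnault's theorem (when $k$ is finite). The main obstacle is the gerbe comparison in the second step: relating the twisted and untwisted gerbes by a cocycle computation in $\H^2(-, \G_m)$ and verifying that twisting by $\beta$ shifts the universal obstruction by $\beta|_{M_{n,L}}$. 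The remainder is a routine amalgamation of standard extension/stabilization techniques for coherent sheaves with the descent-theoretic content of the universal obstruction.
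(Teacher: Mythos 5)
First, a point of comparison: this paper contains no proof of the statement --- it sits in the catalog of results (Section \ref{sec:catalog}), explicitly offered as a vague signpost to \cite{MR2428144} --- so your proposal can only be measured against the actual Krashen--Lieblich argument. Your central mechanism is the right one and is indeed the engine of that paper: a $k$-point $p$ of $\cSh^\stable_{X/k}(n,L)$ with obstruction $\omega(p)=\beta$ yields, by restricting the universal sheaf to the fiber of the moduli gerbe over $p$, a stable $\beta$-twisted bundle of rank $n$ on $X$, and conversely; and the comparison ``gerbe class of the $\beta$-twisted moduli $=$ universal obstruction shifted by $\beta$'' is a genuine ingredient of \cite{MR2428144}. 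The gap is in your first reduction, ``$\ind(\beta|_{k(X)})$ equals the minimal rank of a \emph{stable} $\beta$-twisted bundle over $k$.'' Passing from a torsion-free twisted sheaf of rank $n$ over $k$ to a stable one of the same rank is not a routine modification: it fails outright in genus $0$ (a form of $\P^1$ carries no stable bundles of rank $\geq 2$, yet for $X$ a conic with class $\gamma$ and $\beta=\gamma+\gamma'$ with $\gamma'$ an independent quaternion class, $\beta|_{k(X)}=\gamma'|_{k(X)}$ has index $2$, so your minimum is taken over an empty set), it is constrained by gcd conditions in genus $1$, and even in genus $\geq 2$ it needs a real general-position argument over $k$. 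This is precisely why the theorem of Krashen--Lieblich is not your displayed formula but an index formula of a more robust shape, roughly $\ind(\beta|_{k(X)})=\min_{r,L}\, r\cdot\ind\bigl((\beta-\omega_{r,L})|_{\cSh^\stable_{X/k}(r,L)}\bigr)$: one measures the index of the residual class $\beta-\omega$ over the moduli space (through its closed points, equivalently through higher-rank twisted sheaves on the moduli space paired with the universal sheaf), rather than demanding $k$-rational points where the obstruction is exactly $\beta$; in rank $1$ this already recovers the classical index-reduction formula for conics, which is why no genus hypothesis is needed there.

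Two further confusions in your last paragraph. The appeal to unirationality plus Graber--Harris--Starr or Esnault is unavailable here ($k$ is an arbitrary field in the statement) and is also beside the point: the equality you claim needs no abundance of rational points, and in any case the Serre-type unirational charts of Proposition \ref{P:unirational-curve-classical} carry an honest universal bundle, so the $k$-points they produce satisfy $\omega(p)=0$, not $\omega(p)=\beta$; producing points where the obstruction equals $\beta$ \emph{is} the existence problem for twisted bundles, and rational-point theorems enter in the applications (de Jong's theorem, the finite-field period-index bound), not in this index formula. Finally, fixing an honest determinant $L$ requires $\per(\beta)\mid n$, since the determinant of a rank-$n$ $\beta$-twisted sheaf is $\beta^n$-twisted --- bookkeeping your sketch elides.
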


\subsection{Results related to arithmetic}
\label{sec:arith}

\begin{thm}[Lieblich-Parimala-Suresh, \cite{MR3263156}]
  If Colliot-Th\'el\`ene's conjecture on $0$-cycles of degree $1$ holds for
  geometrically rationally connected varieties, then any field $K$ of
  transcendence degree $1$ over a totally imaginary number field has finite $u$-invariant.
\end{thm}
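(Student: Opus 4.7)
The plan is to reduce the bound on $u(K)$ to an index bound for Brauer classes of period $2$ over $K$, then to realize that index bound as the existence of a $0$-cycle of degree $1$ on a moduli space of twisted sheaves, and finally to invoke Colliot-Th\'el\`ene's conjecture.

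First, I would perform the standard reduction from $u$-invariant to Brauer indices. By the Milnor conjectures (Voevodsky, Orlov--Vishik--Voevodsky, Kahn--Rost--Sujatha) the graded pieces of the fundamental filtration on the Witt ring are controlled by mod-$2$ Milnor $K$-theory, so bounding $u(K)$ follows from bounding symbol lengths in $K^M_n(K)/2$ for small $n$. For $n=2$ this is a bound on the index of $2$-torsion Brauer classes; for $n=1$ it is a finiteness statement about square classes that is already known for $K$ (it is a function field of a curve over a number field, where square classes are controlled by the geometry of the curve and local--global principles for the number field). Using Amer--Brumer, isotropy of sufficiently large forms over $K$ is then reduced to the vanishing of certain Albert forms, hence to index bounds for explicit Brauer classes $\alpha \in {}_2\Br(K)$ with controlled ramification.

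Next, given such an $\alpha$, I would choose a regular proper model $\mathcal{X}/\ms O_F$ of the curve $C$ so that $\alpha$ extends (after a small twist) to a Brauer class on $\mathcal{X}$, and form a moduli space $\cSh^{\semistable}_{\mathcal{X}/F}(n,L,c)$ of stable $\alpha$-twisted sheaves on $\mathcal{X}$ with numerical invariants chosen so that a $\kappa$-point produces an Azumaya algebra of the desired degree in the class $\alpha_\kappa$ for every residue field $\kappa$. Here I use the twisted analogue of Theorem \ref{sec:sheaves-surface-1} (as stated in Section \ref{sec:fm-res}) to ensure that for sufficiently large second Chern class the moduli space is geometrically integral and lci with singular locus of very high codimension; combined with the twisted analogue of Proposition \ref{P:unirational-curve-classical} pushed from curves to a ``general curve in $\mathcal{X}$,'' this yields that a dense open of this moduli space is geometrically rationally connected over $F$.

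With geometric rational connectedness in hand, I would apply Colliot-Th\'el\`ene's conjecture on $0$-cycles of degree $1$ for geometrically rationally connected varieties over the totally imaginary number field $F$. This gives a $0$-cycle of degree $1$ on the moduli space, which, after spreading out via the universal twisted sheaf on $\mathcal{X}\times\cSh^{\semistable}_{\mathcal{X}/F}(n,L,c)$, produces an \'etale extension $K'/K$ of degree dividing the desired integer over which $\alpha$ is represented by an Azumaya algebra of the right degree; this pins down $\ind(\alpha)$ in terms of $\per(\alpha)=2$ and supplies the Brauer index bound needed in the first paragraph.

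The main obstacle is the geometric rational connectedness of the moduli of twisted sheaves on the arithmetic surface $\mathcal{X}$, rather than mere unirationality over the algebraic closure. The Serre-type construction sketched in the proof of Proposition \ref{P:unirational-curve-classical} gives affine-space covers only in dimension $1$; in dimension $2$ one must import the full O'Grady-style hierarchy in the twisted setting, pass to a well-chosen curve-fibration on $\mathcal{X}$, and then propagate the curve case fiberwise while controlling the degeneracy loci. Extracting a geometrically rationally connected (and not merely separably unirational) open subvariety over $F$, defined by invariants that match the Brauer class being split, is the delicate geometric core of the argument; the Brauer-index-to-$u$-invariant reduction and the application of Colliot-Th\'el\`ene's conjecture itself are comparatively formal.
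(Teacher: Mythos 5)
Your overall skeleton (reduce $u(K)$ to index bounds for $2$-torsion Brauer classes, realize the index bound as existence of a $0$-cycle of degree $1$ on a moduli space of twisted sheaves, invoke Colliot-Th\'el\`ene) does match the strategy of the cited work of Lieblich--Parimala--Suresh; the paper itself states the theorem only as a catalog entry, so the comparison is with that work and with the machinery of Sections \ref{sec:ex-curve} and \ref{sec:per-ind}. But the geometric core of your proposal is wrong. You place the moduli problem on a regular proper model $\mathcal{X}/\ms O_F$ and hope to extract geometric rational connectedness from the surface theory (the twisted analogue of Theorem \ref{sec:sheaves-surface-1} plus an O'Grady-style hierarchy). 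Two objections: first, Colliot-Th\'el\`ene's conjecture is about smooth projective geometrically rationally connected varieties over the number field $F$, and a moduli space of sheaves on an arithmetic surface over $\ms O_F$ is not such an object; second, O'Grady-type theorems give geometric integrality and lci-ness, never rational connectedness --- moduli of (twisted) sheaves on surfaces are typically holomorphic symplectic or of general type, so the claimed ``geometrically rationally connected open'' does not exist in that setting. The correct move, and the one actually used, is to write $K=F'(C)$ for a smooth projective curve $C$ over a (still totally imaginary) finite extension $F'$ of $F$, spread the period-$2$ class to a curve-{\simplespace} over $F'$, and take the moduli of stable twisted sheaves with fixed determinant on that curve: this is a variety over the number field, and it is geometrically rationally connected by the twisted analogue of Proposition \ref{P:unirational-curve-classical} (cf.\ Theorem \ref{T:exampl-sheav-curve} and Corollary \ref{C:exampl-sheav-curve}). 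Your concern that the Serre-type construction ``works only in dimension $1$'' is backwards: the ambient object over the number field has relative dimension $1$, so the curve machinery is exactly what is needed and no surface hierarchy enters.

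There is a second genuine gap: you apply the conjecture as an unconditional existence statement (``This gives a $0$-cycle of degree $1$ on the moduli space''). It is not one --- the conjecture asserts that the Brauer--Manin obstruction is the only obstruction, and a nontrivial conic over $\Q$ is geometrically rationally connected with no $0$-cycle of degree $1$. One must therefore verify local $0$-cycles of degree $1$ at every place (at finite places via the known period--index theorems over $F_v(C)$ of Saltman and Parimala--Suresh, at archimedean places using that $F$ is totally imaginary) and control the Brauer--Manin pairing; this is precisely where the totally imaginary hypothesis does its work, along with $\operatorname{cd}_2(K)\le 3$ in the quadratic-form reduction, and your proposal never uses it. Finally, the side claim in your first step that finiteness of square classes of $K$ is known is false ($K^\times/K^{\times 2}$ is infinite for such $K$); the actual reduction from the $u$-invariant to index bounds runs through the filtration of the Witt ring, vanishing of $H^n(K,\mu_2)$ for $n\ge 4$, and control of degree-$3$ cohomology over finite extensions, not through finiteness in degree $1$.
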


\begin{thm}[Lieblich-Maulik-Snowden, \cite{MR3215924}]
  Given a finite field $k$, the Tate conjecture for K3 surfaces over finite
  extensions of $k$ is equivalent to the statement that for each finite
  extension $L$ of $k$, the set of isomorphism classes of K3 surfaces over $L$
  is finite.
\end{thm}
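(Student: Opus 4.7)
The plan is to establish the equivalence by proving each implication separately; the reverse direction will be where the twisted-sheaf machinery described in Section \ref{sec:catalog} does the real work.

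For the direction from Tate to finiteness, I would first note that the characteristic polynomial of Frobenius acting on $\H^2_{\et}(X,\Q_\ell)$ of a K3 surface $X/L$ has bounded degree and integer coefficients of bounded size (by the Weil conjectures), so only countably many such polynomials occur, and for each fixed polynomial only finitely many can be realized. Under the Tate conjecture the Picard rank of $X$ equals the multiplicity of $q_L$ as a root of this polynomial, and moreover one can identify the transcendental part of $\H^2$ with a Galois module whose full structure is determined by the Frobenius action. Using the Kuga--Satake construction, K3 surfaces with the same Frobenius polynomial map to isogeny classes of abelian varieties of fixed dimension over $L$, and classical finiteness of such abelian varieties over a finite field (together with finiteness of polarizations, controlled by the Picard rank) descends to finiteness for K3 surfaces over $L$.

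For the direction from finiteness to Tate I would argue by contradiction. Suppose the Tate conjecture fails for some K3 surface $X$ over a finite extension $L_0/k$; then for some prime $\ell$ the $\ell$-primary Brauer group $\Br(X)\{\ell\}$ has positive corank, so there exists a transcendental Brauer class $\alpha\in\Br(X)$ of some order $n$. I would then consider, for a sequence of Mukai vectors $v$ of square $0$ on the $\m_n$-gerbe $\ms X\to X$ associated to $\alpha$, the moduli stack $\mSh^{\stable}_{\ms X/L_0}(v)$ of stable $\alpha$-twisted sheaves, which by a positive-characteristic analogue of Yoshioka's Theorem \ref{T:yosh} (proved by bootstrapping the twisted moduli theory of \cite{MR2309155} and specializing from characteristic $0$) rigidifies to a K3 surface $Y_v$ defined over a bounded finite extension of $L_0$. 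As $v$ varies, the Mukai-vector calculation forces the transcendental lattices (and propagated Brauer classes) of the $Y_v$ to take infinitely many distinct values, producing infinitely many pairwise non-isomorphic K3 surfaces over some single finite extension of $k$ and contradicting finiteness.

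The hard part will be the reverse direction, and more specifically the moduli-theoretic input: one needs (i) a finite-field, positive-characteristic incarnation of Yoshioka's theorem giving that the rigidified twisted moduli space is actually a K3 surface, (ii) existence of $L_0$-rational (or bounded-extension) points on the moduli stack, which reduces to unirationality of $\mSh^{\stable}_{\ms X/L_0}(v)$ combined with the de Jong--Starr refinement of Graber--Harris--Starr over finite fields, and (iii) a computable invariant distinguishing the $Y_v$, for which the transcendental lattice with its induced Brauer class is the natural candidate but whose variation with $v$ must be pinned down via an explicit Mukai-vector computation. Step (iii) is where most of the delicate work lives; (i) and (ii) are essentially mechanical once the twisted moduli theory is set up in the generality of Section \ref{sec:catalog}.
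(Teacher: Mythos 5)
Your reverse direction (finiteness $\Rightarrow$ Tate) contains a genuine gap, and it is exactly at the point where the paper's own sketch (Section \ref{sec:derived}, via Theorems \ref{T:cheese} and \ref{T:yogurt}) puts its weight. You extract from the failure of Tate a \emph{single} transcendental class $\alpha$ of fixed order $n$ and then vary the Mukai vector $v$, claiming the resulting K3 surfaces $Y_v$ have transcendental lattices taking infinitely many values. This cannot work: every $Y_v$ is a twisted Fourier--Mukai partner of the one fixed pair $(X,\alpha)$, so by the very finiteness theorem of Huybrechts--Stellari (Theorem \ref{T:yogurt}) there are only finitely many isomorphism classes among the $Y_v$; moreover the equivalences identify their transcendental data with that of $(X,\alpha)$ up to isometry, so the invariant you propose in step (iii) does not vary with $v$ at all. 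No contradiction with finiteness of K3 surfaces can be produced from a single class of bounded order. The argument in \cite{MR3215924}, as sketched in the paper, needs the full strength of ``Tate fails $\Rightarrow \Br(X)$ is infinite'': one chooses classes of order $\ell^n$ for \emph{all} $n$ (the compatible tower of merbes $(X_n)$ of Section \ref{sec:K3-merbes}), applies Theorem \ref{T:cheese} to get for each $n$ a K3 surface $M_n$ with $\D^{(-1)}(X_n)\cong\D(M_n)$, and then, assuming only finitely many K3 surfaces exist, concludes that some single $M$ is derived-equivalent to infinitely many pairwise distinct twisted K3s $(X,\alpha_n)$ --- contradicting Theorem \ref{T:yogurt}. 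In other words, partner-finiteness is the counting device that makes the proof work, whereas in your proposal it is precisely the obstruction that kills the construction.

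Two smaller points. Your step (ii) is misdirected: one does not need rational points on the moduli stack via unirationality and Graber--Harris--Starr/de Jong--Starr (that machinery belongs to the period-index applications of Section \ref{sec:per-ind}); here the rigidified moduli space \emph{itself} is the new K3 surface, and the real work is nonemptiness, properness/smoothness in dimension two, and splittability (the condition $v\cdot u$ prime to $\ell$ in Theorem \ref{T:cheese}). For the forward direction (Tate $\Rightarrow$ finiteness) the paper gives no sketch; your Kuga--Satake route is not the one taken in \cite{MR3215924} (which bounds polarization degrees using evenness of the Picard rank and boundedness of discriminants under Tate), and as written it leaves unaddressed how to bound polarization degrees uniformly, which is the actual content of that implication.
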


\begin{thm}[Charles, \cite{1407.0592}]
  The Tate conjecture holds for K3 surfaces over finite fields of characteristic at least $5$.
\end{thm}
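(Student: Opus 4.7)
The plan is to leverage the Lieblich-Maulik-Snowden equivalence recalled just above: it suffices to prove that for every finite extension $L/k$ (with $\ch(L) = p \geq 5$) the set of isomorphism classes of K3 surfaces over $L$ is finite. The Tate conjecture then follows formally from that theorem, so the entire argument rephrases itself as a finiteness problem, which is amenable to moduli and Kuga-Satake techniques rather than to direct cycle construction.

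Fix such an $L$ and a K3 surface $X/L$, and choose a primitive polarization $h$ of degree $h^2 = 2d$. The moduli stack $\ms M_{2d}$ of primitively polarized K3 surfaces of degree $2d$ is of finite type over $\Z[1/2d]$ in residue characteristic $\geq 5$ (by work of Rizov, Madapusi Pera, Maulik and others), so $\ms M_{2d}(L)$ is finite for each fixed $d$. The entire problem thus reduces to bounding $2d$ uniformly over all $X/L$. For this I would invoke the integral Kuga-Satake construction available in characteristic $p \geq 5$ via Madapusi Pera's canonical integral models of orthogonal Shimura varieties: it associates to $(X,h)/L$ an abelian variety $\operatorname{KS}(X,h)$ of fixed dimension $2^{20}$ over a finite extension $L'/L$, and the primitive part of $\H^2_{\et}(X,\Q_\ell(1))$ embeds inside a Clifford algebra built from the $\ell$-adic Tate module of $\operatorname{KS}(X,h)$. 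Honda-Tate together with Zarhin's finiteness of polarized abelian varieties of bounded dimension over any fixed finite field yield only finitely many such $\operatorname{KS}(X,h)$ over $L'$, and the pair $(X,h)$ is pinned down by its Kuga-Satake variety (together with its Hodge-cycle decoration) up to finite ambiguity. In particular the set of polarization degrees $2d$ is bounded, and finiteness of the set of $X/L$ follows.

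The principal obstacle is the supersingular stratum, where $\operatorname{KS}(X)$ is isogenous to a power of a supersingular elliptic curve and therefore fails to separate isomorphism classes of supersingular K3s; the Kuga-Satake morphism is no longer injective on isomorphism classes modulo finite ambiguity. To handle this one combines Artin's stratification of supersingular K3 moduli by Artin invariant $\sigma_0 \in \{1,\ldots,10\}$, Ogus's crystalline Torelli theorem constraining supersingular K3 crystals over $L$, and Maulik's $p \geq 5$ boundedness results for families of supersingular K3s (which are precisely the source of the characteristic hypothesis in the statement). Together these pin each supersingular stratum over $L$ into finitely many isomorphism classes, closing the gap left by the Kuga-Satake argument and completing the proof via Lieblich-Maulik-Snowden.
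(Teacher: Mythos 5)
Your first step---reducing, via the Lieblich--Maulik--Snowden equivalence, to the statement that for each finite extension $L$ of the prime field there are only finitely many K3 surfaces over $L$---is exactly the reduction the paper makes. But the way you then try to establish finiteness has a genuine gap at its crux. The whole difficulty is to bound the degree $2d$ of a polarization uniformly, and your argument for this does not work: from the finiteness of Kuga--Satake abelian varieties of dimension $2^{20}$ over a finite field you cannot conclude that ``the pair $(X,h)$ is pinned down by its Kuga--Satake variety up to finite ambiguity.'' Over a finite field there is no unconditional Torelli-type or injectivity statement for the Kuga--Satake assignment; establishing such a statement (e.g.\ recovering the primitive cohomology of $X$ with its cycle classes inside the Clifford algebra of the Tate module of $\operatorname{KS}(X,h)$ and descending that to an isomorphism of surfaces) essentially requires the Tate conjecture you are trying to prove, so the argument is circular. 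Moreover the field $L'$ over which $\operatorname{KS}(X,h)$ is defined depends on $(X,h)$ (through level/spin data), so ``finitely many abelian varieties over $L'$'' is not even a statement about a single fixed finite field. The same circularity infects your supersingular patch: Ogus's crystalline Torelli theorem applies to Shioda-supersingular K3s (Picard rank $22$), and knowing that Artin-supersingular surfaces are Shioda-supersingular is precisely the Tate conjecture in that case.

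This missing step is exactly what Charles's argument, as sketched in the paper, is designed to supply, and it is where the moduli of twisted sheaves enter---an ingredient entirely absent from your proposal. Rather than trying to bound the minimal polarization degree of $X$ directly, one performs a Zarhin-style trick: using moduli spaces of stable twisted sheaves on $X$ (which in characteristic $p\ge 5$ retain the Yoshioka-type properties from the complex case, cf.\ Theorem \ref{T:yosh} and Theorem \ref{T:cheese}), one replaces $X$ by a derived/Fourier--Mukai partner carrying a polarization of bounded degree; finiteness of K3s with bounded polarization degree (finite type of $\ms M_{2d}$, where the integral canonical models and the relative Kuga--Satake period map do appear, but only as a substitute period map, not as an injectivity statement) together with finiteness of such partners then gives the finiteness of K3 surfaces over $L$. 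If you want to repair your write-up, the thing to add is this twisted-sheaf boundedness step in place of the claimed Kuga--Satake rigidity.
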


\part{A thought experiment}\label{part:thought-experiment}
In this part, we describe a uniform theory that erases the geometric distinction
between the ambient space holding the sheaves and its moduli space:
the theory of \emph{{\simplespace}s}.

\section{Some terminology}
\label{sec:terminology}

We will work over the base site of schemes in the fppf topology. Given a stack
$Z$, we will let $\inertia(Z):\Inertia(Z)\to Z$ denote the inertia stack of $Z$,
with its canonical projection map. Given a morphism of stacks $p:X\to S$, we will
let $\Inertia(X/S)$ denote the kernel of the natural map $\Inertia(X)\to
p^\ast\Inertia(S)$. (Even though $p^\ast$ is not unique, the subsheaf
$\Inertia(X/S)\subset\Inertia(X)$ is uniquely determined by any choice of $p^\ast$.)
We will let $\Sh(Z)$ denote the
sheafification of $Z$, which is a sheaf on schemes. We will write $\Site(Z)$ for
the site of $Z$ induced by the fppf topology on the base category.

Several categories will be important throughout this paper. Given a stack $S$,
we will let $\Sch_S$ denote the category of schemes over $S$; this is the total
space of $S$, viewed as a fibered category over $\Spec\Z$. We will let
$\Set$ denote the category of sets and $\Groupoid$ denote the (2-)category of
groupoids. The notation $\GrpSch_S$ denotes the category of group schemes over a
base $S$.

We end this section by defining a convenient notational structure on the subgroups of the
multiplicative group. This will be useful in our discussion of {\simplespace}s
starting in Section \ref{sec:shpaces}.

\begin{defn}\label{defn:levels}
  Let $\Types$ be the category with objects $\N\cup\{\infty\}$, and with
  \begin{equation}
    \Hom(m,m') =
    \begin{cases}
      \{\emptyset\} & \text{if}\ m|m'\\
      \emptyset & \text{otherwise}
    \end{cases}
  \end{equation}
  By convention, we assume that every natural number divides $\infty$.
\end{defn}

\begin{notation}\label{notn:infty}
  The notation $\Z/\infty\Z$ will mean simply $\Z$.
\end{notation}

\begin{defn}\label{defn:level-group}
  Define the functor $$G:\Types\to\GrpSch_{\Z}$$ by letting $G(m)=\m_m$ for
  $m<\infty$ and $G(\infty)=\G_m$. Given a divisibility relation $m|m'$, the arrow $G(m)\to
  G(m')$ is the canonical inclusion morphism.
\end{defn}
 By Notation \ref{notn:infty}, for all $m$ in
  $\Types$ we have that $\Z/m\Z$ is naturally the dual group scheme of $G(m)$.

\subsection{The 2-category of {\simplespace}s}
\label{sec:shpaces}

In order to give a symmetric description of moduli problems attached to sheaves,
we will need to work with $\G_m$-gerbes and $\m_n$-gerbes. Rather than
constantly refer to ``$\G_m$-gerbes or $\m_n$-gerbes'', we will phrase results
using a theory we call \emph{{\simplespace}s\/}. We do this for
two reasons: (1) when people see the term ``$\G_m$-gerbe'' or ``$\m_n$-gerbe'',
they are primed to think of it as a \emph{relative\/} term (i.e., they think of
a gerbe over a space, as in Giraud's original theory), and (2) many of our
results work simultaneously for $\G_m$-gerbes and $\m_n$-gerbes, or involve
relationships among such gerbes, and we prefer to put them under a single
unified umbrella.

\begin{defn}
  A \emph{\simplespace\/} over a stack $S$ is a pair $(X\to S,i)$, where $X$ is
  a stack with a morphism to $S$ and
  $i:G(m)\to\Inertia(X/S)$ is a monomorphism of sheaves of groups with central
  image for some $m$. We will call $m$ the \emph{level\/} of the {\simplespace}. A morphism of {\simplespace}s
  $(X,i)\to(Y,j)$ is a $1$-morphism $f:X\to Y$ over $S$ such that the induced
  diagram of canonical maps
  $$\xymatrix{G(m)\ar[r]\ar[d] & \Inertia(X/S)\ar[d]\\
    f^\ast G(m')|_{Y}\ar[r] & f^\ast\Inertia(Y/S)}$$
  commutes. Here we mean that the level of $X$ (written $m$) is assumed to
divide the level of $Y$ (written $m'$), and the left vertical map is the one
induced by the canonical inclusion $G(m)\inj G(m')$.

  We will say that the {\simplespace} has a property of a
  morphism of stacks (e.g.\ ``separated'') if the underlying morphism $X\to S$ has
  that property. For example, if the underlying stack $X$ is relatively algebraic over $S$, we will call the
  {\simplespace} algebraic. 
\end{defn}

\begin{notation}
  We will generally abuse notation and omit $i$ from the notation, instead
  referring to $G(m)$ as a subsheaf of $\Inertia(X/S)$, and we will call it the
  \emph{level subgroup\/}. Given a merbe $X$, we will
  write $G(X)$ for the level subgroup when we do not want or need to refer
  explicitly to its level. We will write $\widebar X$ for the rigidification
  $X/G(X)$ in the sense of \cite{MR2427954,MR2007376,MR2450211}, and we will call it the \emph{rigidification of
    $X$\/}.
\end{notation}

As we know from [\emph{loc.\ cit.}], the morphism $X\to\widebar X$ is a $G(X)$-gerbe (in the
fppf topology on $\widebar X$).

\begin{notation}
  We will write $\SSp$ for the $2$-category of {\simplespace}s.
\end{notation}

Sending a {\simplespace} to its level defines a fibered category
$$\lambda:\SSp\to\Types.$$

\begin{lem}\label{lem:cocart}
  The functor $\lambda$ is co-Cartesian.
\end{lem}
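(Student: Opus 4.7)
The plan is to show that for every merbe $X$ of level $m$ over a stack $S$ and every divisibility relation $m \mid m'$ in $\Types$, there is a co-Cartesian arrow $\alpha \colon X \to X'$ with $\lambda(\alpha)$ the arrow $m \to m'$. The construction is by extension of structure group for gerbes. Since the remark preceding the lemma identifies $X \to \widebar{X}$ as a $G(m)$-gerbe on $\widebar{X}$, define $X'$ to be the $G(m')$-gerbe over $\widebar{X}$ obtained by pushing the class of $X$ in $\H^2_{\text{fppf}}(\widebar{X}, G(m))$ forward along $G(m) \hookrightarrow G(m')$. This extension comes equipped with a canonical $1$-morphism $\alpha \colon X \to X'$ of gerbes over $\widebar{X}$ compatible with $G(m) \hookrightarrow G(m')$. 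Composing $X' \to \widebar{X} \to S$ makes $X'$ a stack over $S$, and the $G(m')$-gerbe structure supplies the central monomorphism $G(m') \hookrightarrow \Inertia(X'/S)$ turning $X'$ into a merbe of level $m'$; that $\alpha$ is a merbe morphism is built into the construction.

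For co-Cartesianness, let $(Y, G(m''))$ be a merbe of level $m''$ over $S$ with $m' \mid m''$, and let $\beta \colon X \to Y$ be a merbe morphism. Because $\beta$ carries the level subgroup $G(m) \subset \Inertia(X/S)$ into $G(m'') \subset \Inertia(Y/S)$, it descends to $\widebar{\beta} \colon \widebar{X} \to \widebar{Y}$ on rigidifications, and $\beta$ factors as $X \to \widebar{\beta}^* Y \to Y$, where $\widebar{\beta}^* Y \to \widebar{X}$ is a $G(m'')$-gerbe. The first arrow is a morphism of gerbes over $\widebar{X}$ compatible with $G(m) \hookrightarrow G(m'')$; since this inclusion factors through $G(m')$, the universal property of extension of structure group yields a unique-up-to-unique-$2$-isomorphism factorization through the $G(m')$-gerbe $X'$. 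Composition with $\widebar{\beta}^* Y \to Y$ produces the desired $X' \to Y$, which is a merbe morphism covering $m' \to m''$ by construction.

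The substantive step is the universal property of extension of structure group for abelian gerbes in the fppf topology: for an inclusion $G \hookrightarrow H$ of abelian fppf sheaves on a stack $Z$ and a $G$-gerbe $\mathcal{G} \to Z$, the associated $H$-gerbe $\mathcal{H} \to Z$ is $2$-categorically universal among compatible morphisms of $\mathcal{G}$ to an $H$-gerbe over $Z$. This can be checked abstractly via the $\H^2$-classification of gerbes or concretely by chasing $2$-cocycles; once in hand, the lemma follows formally. The only other delicate point is bookkeeping: $\lambda$ is to be co-Cartesian as a functor of $2$-categories (with $\Types$ regarded as a discrete $2$-category), so the required uniqueness of the factorizing arrow is only up to canonical $2$-isomorphism --- which is exactly what the extension construction provides.
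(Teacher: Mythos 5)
Your proposal is correct and follows essentially the same route as the paper: rigidify $X\to\widebar X$ to a $G(m)$-gerbe, apply Giraud's extension of structure group along $G(m)\hookrightarrow G(m')$ to produce $X'$ with its canonical map $X\to X'$, and verify universality by descending a given merbe morphism to rigidifications and factoring through the pulled-back gerbe $Y\times_{\widebar Y}\widebar X$. The only differences are cosmetic — you phrase the extension cohomologically and state the factorization for general target level $m''$ with $m'\mid m''$, while the paper simply cites Giraud for both the construction and the unique factorization.
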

\begin{proof}
  This means: given a {\simplespace} $X$ of level $m$ and a divisibility relation
  $m|m'$, there is a {\simplespace} $X'$ of level $m'$ with a map $X\to X'$ that
  is universal for morphisms from $X$ to {\simplespace}s of level $m'$. To see
  this, let $\widebar X$ be the rigidification of $X$ along $G(m)$. Giraud's
  construction \cite{MR0344253} gives rise to an
  extension of $X\to\widebar X$ to a $G(m')$-gerbe $X'\to\widebar X$. Any
  map of {\simplespace}s $X\to Y$ with $Y$ of level $m'$ induces a morphism of
  rigidifications $\widebar X\to\widebar Y$, giving a map $X\to Y\times_{\widebar
    Y}\widebar X$ of gerbes over $\widebar X$ that respects the map $G(m)\to
  G(m')$. The techniques of \cite{MR0344253} show that this factors uniquely through $X'$, as desired.
\end{proof}

\begin{example}\label{ex:can-space}
  Every morphism of stacks $X\to S$ has an associated {\simplespace} for each
  level $m$, namely
  $X\times\B G(m)$. Thus, for example, any morphism of schemes has a canonically
  associated {\simplespace} of level $m$.
\end{example}

\begin{defn}\label{defn:can-space}
  Given a {\simplespace} $X\to S$ of level $m$ and a divisibility relation
  $m|m'$,
  we will let $\Can_{m|m'}(X)$ denote the
  \emph{canonically associated {\simplespace} of level $m'$\/} arising from Lemma
  \ref{lem:cocart}.
\end{defn}
As an example, any stack is canonically a {\simplespace} of level $1$. For any
$m$, we have $$\Can_{1|m}(X)=X\times\B G(m),$$ with the inclusion
$G(m)\inj\Inertia(X\times\B G(m))$ arising from the canonical isomorphism
$$\Inertia(X\times\B G(m))\simto\Inertia(X)\times G(m)|_X.$$

\begin{remark}
  Given a merbe $X$ of level $m$ and two divisibility relations $m|m'|m''$,
  there is a canonical isomorphism
  $$\Can_{m'|m''}(\Can_{m|m'}(X))\simto\Can_{m|m''}(X)$$
  compatible with the natural maps from $X$.
\end{remark}

\begin{defn}
  A {\simplespace} $X$ is \emph{splittable\/} if there is an isomorphism
  $$\Can_{\lambda(X)|\infty}(X)\cong\Can_{1|\infty}(\widebar X).$$
\end{defn}
In less technical terms, a {\simplespace} is splittable if the associated
$\G_m$-gerbe is trivializable.

There is one more natural operation one can perform on {\simplespace}s.

\begin{defn}\label{def:contract}
  Given a {\simplespace} $X\to S$ and a divisibility relation $m|\lambda(X)$,
  the \emph{contraction\/} of $X$, denoted $\Contr_{\lambda(X)/m}(X)$ is the
  rigidification of $X$ along the subgroup $G(\lambda(X)/m)$.
\end{defn}
Thus, for example, we have $\widebar X=\Contr_{\lambda(X)/1}(X)$.

\subsection{Sheaves on {\simplespace}s}
\label{sec:sheaves-on-shpaces}

Fix a {\simplespace} $X\to S$. Since $G(m)$ is a subsheaf of $\Inertia(X/S)$
(and thus of $\Inertia(X)$), any abelian sheaf $F$ on $X$ admits a natural right
action of $G(m)$. In particular, if $X$ is algebraic and $F$ is quasi-coherent
$F$ breaks up as a direct sum of eigensheaves
$$F=\bigoplus_{n\in\Z/m\Z}F_n,$$
where the action $F_n\times{G(m)}\to F_n$ is described on local sections by
$$(f, \alpha)\mapsto\alpha^{-n}f.$$
(In other words, the left action canonically associated to the inertial action
is multiplication by $n$th powers via the $\ms O_X$-module structure on $F$.)

\begin{defn}
  A sheaf $F$ of $\ms O_X$-modules on the {\simplespace} $X$ will be called an
  \emph{$n$-sheaf\/} if the right action $F\times{G(m)}\to F$ satisfies
  $(f,\alpha)\mapsto\alpha^{-n} f$ on local sections.
\end{defn}

\begin{notation}
  We will write $\Sh^{(n)}(X)$ for the category of $n$-sheaves on $X$.
\end{notation}

In popular terminology, $n$-sheaves are usually called ``$n$-fold twisted
sheaves'' and $1$-sheaves are usually called ``twisted sheaves''. 

\begin{example}
  Given a morphism of stacks $X\to S$ with canonical {\simplespace} $\Can_{1|m}(X)$,
  there is a canonical invertible $n$-sheaf for every $n$. Indeed, via the canonical
  map $$\Can_{1|m}(X)\to\Can_{1|m}(S)=\B{G(m)}$$ we see that it suffices to demonstrate this for
  $\B{G(m)}$. Any character of ${G(m)}$ induces a canonical invertible sheaf
  (but note that the equivalence of representations of $G(m)$ and sheaves on $\B
  G(m)$ naturally involves \emph{right\/} representations, so the corresponding
  left action requires composition with inversion).
  The $n$th power map $$\xymatrix{{G(m)}\ar[r]^{\bullet^n} & {G(m)}\ar[r] & \G_m}$$ gives an $n$-sheaf. 
\end{example}

\begin{notation}
  The canonical invertible $n$-sheaf on $\Can(X)$ will be called $\chi^{(n)}_X$.
\end{notation}

The key to the symmetrization presented in this article is the
following pair of mundane lemmas.

\begin{lem}\label{lem:sheaves-simplespaces}
  Suppose $X\to S$ is a morphism of stacks. For any $n$ and $m$, there is a canonical
  equivalence of categories
  $$\Mod_{\ms O_X}\to \Sh^{(n)}(\Can_{1|m}(X))$$
  given by pulling back along the projection $\Can_{1|m}(X)\to X$ and tensoring with $\chi^{(n)}_X$.
\end{lem}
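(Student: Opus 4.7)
The plan is to exhibit an explicit quasi-inverse and reduce the verification to two standard facts: tensoring with an invertible sheaf is an autoequivalence, and a $0$-sheaf on a trivial gerbe descends canonically to the base.

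First I would reduce to the case $n=0$. The sheaf $\chi^{(n)}_X$ is invertible, with dual $\chi^{(-n)}_X$ satisfying $\chi^{(n)}_X \otimes \chi^{(-n)}_X \cong \ms O_{\Can_{1|m}(X)}$ canonically, so tensoring with $\chi^{(n)}_X$ is an autoequivalence of $\Mod_{\ms O_{\Can_{1|m}(X)}}$ with quasi-inverse tensoring with $\chi^{(-n)}_X$. Since the inertial character of a tensor product of sheaves adds, this autoequivalence carries $\Sh^{(k)}$ onto $\Sh^{(k+n)}$. It therefore suffices to produce a canonical equivalence $p^*:\Mod_{\ms O_X}\simto\Sh^{(0)}(\Can_{1|m}(X))$ induced by pullback along the projection $p:\Can_{1|m}(X)=X\times\B G(m)\to X$.

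Next I would establish the $n=0$ equivalence directly. For any $F$ on $X$, the inertial action of $G(m)$ on $p^*F$ is trivial, since this inertia is the $G(m)$-factor in $\Inertia(X\times\B G(m))\simto\Inertia(X)\times G(m)|_X$ and it acts trivially on objects pulled back from $X$; thus $p^*F$ is a $0$-sheaf. For the quasi-inverse, let $s:X\to X\times\B G(m)$ be the canonical section of $p$ corresponding to the trivial $G(m)$-torsor. Given a $0$-sheaf $\mc G$, the triviality of the $G(m)$-action means $s^*\mc G$ comes equipped with canonical descent data along the $G(m)$-gerbe $p$, and the unit and counit of the adjunction $(p^*,s^*)$, restricted to $\Sh^{(0)}$, are isomorphisms. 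This is the familiar eigensheaf decomposition of $\Qcoh(\B G(m))$ into characters of $G(m)$, relativized over $X$: the degree-zero piece is exactly the image of pullback from $X$.

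The main obstacle is sign bookkeeping. One must verify that the convention for $n$-sheaves (right $G(m)$-action by $\alpha^{-n}$) and the convention for $\chi^{(n)}_X$ (built from the $n$th power character, with inversion inserted to pass between right representations and left $\ms O$-module actions) align so that tensoring with $\chi^{(n)}_X$ shifts the eigendecomposition by $+n$ rather than $-n$. Once this sign is pinned down, the content of the lemma is entirely formal, following from the definition of $\Can_{1|m}(X)$ as the trivial $\B G(m)$-bundle over $X$ together with the eigendecomposition of $\Qcoh(\B G(m))$.
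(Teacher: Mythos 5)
Your proposal is correct and is essentially the paper's own argument: the paper simply declares the quasi-inverse to be tensoring with $\chi^{(-n)}_X$ and pushing forward along $\Can_{1|m}(X)=X\times\B G(m)\to X$, citing references for the verification, which is exactly your twist-to-degree-zero-then-descend step (on $0$-sheaves your $s^*$ agrees with $p_*$, so the two quasi-inverses coincide), with the sign bookkeeping likewise left implicit in both. The only cosmetic slip is calling $(p^*,s^*)$ an adjunction --- the relevant adjunction is $(p^*,p_*)$, with $s^*p^*\cong\id$ coming from $p\circ s=\id$ --- but this does not affect the argument.
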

\begin{proof}
  The inverse equivalence is given by tensoring with $\chi^{(-n)}_X$ and pushing
  forward to $X$. More details that these are inverse equivalences may be found
  in \cite{MR2388554,MR2309155}. 
\end{proof}

\begin{lem}\label{L:level-transfer}
  For any {\simplespace} $X$ of level $m$ over $S$, any divisibility relation
  $m|m'$, and any $n$, the natural map $$X\to\Can_{m|m'}(X)$$ induces an
  equivalence $$\Sh^{(n)}(\Can_{m|m'}(X))\to\Sh^{(n)}(X)$$ by pullback. 
\end{lem}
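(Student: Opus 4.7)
Both $X$ and $X':=\Can_{m|m'}(X)$ live as gerbes over the common rigidification $\widebar X$, by construction of $\Can_{m|m'}$ in the proof of Lemma \ref{lem:cocart}: $X\to\widebar X$ is a $G(m)$-gerbe, $X'\to\widebar X$ is a $G(m')$-gerbe, and the canonical morphism $p\colon X\to X'$ sits over $\widebar X$ compatibly with the inclusion $G(m)\inj G(m')$ on level subgroups. The plan is to check that $p^\ast\colon\Sh^{(n)}(X')\to\Sh^{(n)}(X)$ is an equivalence fppf-locally on $\widebar X$ and then to invoke fppf descent for quasi-coherent sheaves.

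Choose an fppf cover $U\to\widebar X$ on which the gerbe $X|_U$ trivializes. By the functoriality of Giraud's extension (the essential content of the proof of Lemma \ref{lem:cocart}), the same cover trivializes $X'|_U$ compatibly, and we may identify $X|_U\cong U\times\B G(m)$ and $X'|_U\cong U\times\B G(m')$ in such a way that $p|_U$ becomes $\id_U\times j$, where $j\colon\B G(m)\to\B G(m')$ is the morphism induced by $G(m)\inj G(m')$. Lemma \ref{lem:sheaves-simplespaces} now furnishes equivalences
$$\Sh^{(n)}(U\times\B G(m))\simeq\Mod_{\ms O_U}\simeq\Sh^{(n)}(U\times\B G(m')),$$
each implemented by pullback to $U$ twisted by the canonical $n$-sheaf $\chi^{(n)}$. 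Because the restriction along $G(m)\inj G(m')$ of the character defining $\chi^{(n)}_{U,m'}$ is exactly the character defining $\chi^{(n)}_{U,m}$, pulling back $\chi^{(n)}_{U,m'}$ along $p|_U$ recovers $\chi^{(n)}_{U,m}$; hence $p|_U^\ast$ corresponds to the identity functor on $\Mod_{\ms O_U}$ under the two equivalences and is in particular an equivalence.

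To globalize, observe that both full faithfulness and essential surjectivity of $p^\ast$ can be tested after fppf base change on $\widebar X$: the former because the formation of internal $\shom$ on $X$ and on $X'$ commutes with pullback along $U\to\widebar X$, and the latter because being an $n$-sheaf is a descent-compatible eigensheaf condition on the level inertia. Combined with the local equivalence above, fppf descent for quasi-coherent sheaves yields the conclusion. The step requiring the most care is arranging compatible trivializations of $X|_U$ and $X'|_U$: without Giraud's functorial extension of the structure group, a generic pair of trivializations would present $p|_U$ as $\id_U\times j$ only up to an automorphism of the target $G(m')$-gerbe, and this ambiguity would have to be tracked through the equivalence of Lemma \ref{lem:sheaves-simplespaces} before one could conclude that $p^\ast$ matches the identity on $\Mod_{\ms O_U}$.
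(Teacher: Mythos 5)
Your proof is correct and follows essentially the same route as the paper's: both view the two categories of $n$-sheaves as global sections of stacks over the common rigidification $\widebar X$, with pullback a morphism of stacks, and reduce to the fppf-local case where the gerbes trivialize to $U\times\B G(m)$ and $U\times\B G(m')$. The only difference is cosmetic and occurs at the local step, where you invoke Lemma \ref{lem:sheaves-simplespaces} twice and track the canonical twists $\chi^{(n)}$, whereas the paper identifies quasi-coherent sheaves directly with representations and notes that the $n$-sheaf condition factors through the $n$th power map $G(m)\to\G_m$, identifying both local stacks with $n$-sheaves on $\B\G_m$.
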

\begin{proof}
  Let $\widebar X$ be the rigidification of $X$ along $G(m)$, so that
  $X\to\widebar X$ (respectively, $\Can_{m|m'}(X)\to\widebar X$) is a
  $G(m)$-gerbe (respectively, a $G(m')$-gerbe). Each of the the categories $\Sh^{(n)}(X)$
  and $\Sh^{(n)}(\Can_{m|m'}(X))$ is the global sections of a stack on $\widebar X$,
  and pullback is a morphism between these stacks. Working locally on $\widebar
  X$, we see that it thus suffices to prove this when $X=\widebar X\times\B
  G(m)$, in which case $\Can_{m|m'}(X)=\widebar X\times\B G(m')$. Since we are
  working on the big fppf site, and the stacks of
  $n$-sheaves are the restrictions of the corresponding stacks for $\B G(m)$ and
  $\B G(m')$ over $\Spec\Z$, we see that it suffices to prove the result for $\B
  G(m)$ and $\B G(m')$ over $\Z$. Now, the quasi-coherent sheaves parametrized by $T$
  are precisely right
  representations of $G(m)$ and $G(m')$ over $T$, and the $n$-sheaf condition just says
  that the module action factors through the map $G(m)\to\G_m\to\G_m$, where the
  latter is the $n$th power map. Both stacks are identified with
  the stack of $n$-sheaves on $\B\G_m$, completing the proof.
\end{proof}

\section{Moduli of sheaves: basics and examples}
\label{sec:basics}

\subsection{The basics}
\label{sec:basicsbasics}

In this section, we introduce the basic moduli problems. Fix an algebraic {\simplespace} 
$f:X\to S$. We will assume to start that $f$ is separated and of finite
presentation.

\begin{defn}
  Given a morphism of stacks $T\to S$, a \emph{family of sheaves on $X/S$
    parametrized by T\/} is a quasi-coherent sheaf $F$ on $X\times_X T$ that is
  locally of finite presentation and flat over $T$.
\end{defn}

We will be mostly interested in families of sheaves with proper support.

\begin{defn}
  A \emph{family of sheaves on $X/S$ with proper support parametrized by $T$\/} is a family of
  sheaves $F$ on $X/S$ parametrized by $T$ such that the support of $F$ is proper over $T$.
\end{defn}

\begin{defn}
  Given $n$, a \emph{family of $n$-sheaves\/} parametrized by $T\to S$ is a
  family of sheaves $F$ parametrized by $T$ such that $F$ is an $n$-sheaf.
\end{defn}

\begin{defn}
  A family $F$ of sheaves on $X/S$ parametrized by $T\to S$ is \emph{perfect\/}
  if there is an fppf cover $U=\{\sqcup U_i\}\to X$ by a disjoint union of
  affine schemes such that for each $i$ the restriction $F|_{U_i}$ has a finite resolution by
  free $\ms O_U$ modules.
\end{defn}

The key feature of a perfect sheaf $F$ is its determinant $\det(F)$, which is an
invertible sheaf on $X$ that is additive in $F$ (that is, given a sequence of
perfect sheaves $0\to F\to F'\to F''\to 0$ we have
$\det(F')\cong\det(F)\tensor\det(F'')$) and has value $\bigwedge^r F$ when $F$ is
locally free of constant rank $r$. The existence of the determinant
follows from the work of Mumford and Knudsen \cite{MR0437541}.

It is relatively straightforward to show that the pullback of a family is a
family, and that pullback respects perfection and determinants.

In the rest of the paper, I will relax the terminology and may refer more loosely to
objects of this kind as ``families of sheaves'' (without always mentioning the
original morphism or the base of the family).

The basic (almost totally vacuous) result about families of sheaves is the
following.

\begin{prop}\label{sec:basics-1}
  Given a {\simplespace} $X\to S$, families of $n$-sheaves on $X/S$ naturally
  form a {\simplespace} $\mSh_{X/S}^{(n)}\to S$ of level $\infty$. If $X$ is
  algebraic and of finite presentation over $S$, the perfect
  families are parametrized by an open substack $\mSh_{X/S}^{(n)\perf}\subset\mSh_{X/S}^{(n)}$.
\end{prop}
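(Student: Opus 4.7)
The plan is to establish the two claims in sequence: first that $\mSh_{X/S}^{(n)}$ is a stack of the stated type, then that the perfect locus is open.

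For the stack property, the verification is essentially formal from fppf descent. Given $T\to S$ and an fppf cover $T'\to T$, a descent datum for families of $n$-sheaves on $X_{T'}$ descends by Grothendieck's fpqc descent for quasi-coherent sheaves to a quasi-coherent sheaf on $X_T$. Each of the auxiliary conditions---flatness over the base, local finite presentation, and the eigensheaf identity $(f,\alpha)\mapsto\alpha^{-n}f$ for the $G(\lambda(X))$-action inherited from $\Inertia(X/S)$---is fppf-local on $T$ and stable under pullback, so it descends as well. To equip $\mSh_{X/S}^{(n)}$ with its level-$\infty$ structure, I would construct $i:\G_m\to\Inertia(\mSh_{X/S}^{(n)}/S)$ by sending $\alpha\in\G_m(T)$ to the scalar automorphism $\alpha\cdot\id_F$ of a family $F$ on $X_T$, arising from the $\ms O_T$-module structure (and not from $G(\lambda(X))$). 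This is functorial in $T$ and in $F$, is central in $\Aut(F)$, and is a monomorphism on the locus of families with nonzero support---as is standard in moduli of sheaves, the zero family is either excluded by convention or treated as an isolated stratum.

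For openness of the perfect locus, the question reduces via an affine-local computation on an fppf cover of $X\times_S T$ to the following well-known statement: if $M$ is a finitely presented module over a finitely presented algebra that is flat over a base ring $R$, then the set of points of $\Spec R$ at which $M$ has finite Tor-dimension is open. This is a classical consequence of the theory of finite free resolutions over flat families (and appears in standard references on perfect complexes in the Stacks Project). Since perfection is fppf-local on $X$ and compatible with pullback in $T$, the open condition descends to a well-defined open substack $\mSh_{X/S}^{(n)\perf}\subset\mSh_{X/S}^{(n)}$.

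The main point to watch---and the reason the proposition is not totally vacuous---is the interaction between the two distinct scalar actions in play: the inherited subgroup $G(\lambda(X))\subset\Inertia(X/S)$ that produces the $n$-twisting, and the newly-introduced $\G_m$ inside $\Inertia(\mSh_{X/S}^{(n)}/S)$ coming from $\ms O_T$-multiplication on families. The key bookkeeping step is to confirm that these two actions live in genuinely independent portions of the inertia, so that the new scalar $\G_m$ is a monomorphism with central image producing level exactly $\infty$ rather than collapsing to a finite quotient by some unrecognized identification. Once this separation is organized carefully, compatibility with pullback along morphisms of {\simplespace}s is routine and the proposition follows.
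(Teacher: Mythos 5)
Your proposal is correct and follows essentially the same route as the paper: fppf descent for quasi-coherent sheaves (with the $n$-sheaf and flatness conditions being fppf-local) gives the stack, scalar multiplication on families supplies the central $\G_m$ in the inertia and hence the level-$\infty$ merbe structure, and openness of the perfect locus is reduced, affine-locally on $S$, $T$, and $X$, to the classical statement that finite Tor-dimension (perfection) is an open condition for a finitely presented, base-flat module over a finitely presented algebra. The paper is equally brief at this point, simply invoking ``standard constructibility results'' after the same reduction, so your treatment matches it in both substance and level of detail.
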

\begin{proof}
  We know that fppf descent is effective for
  quasi-coherent sheaves of finite presentation on a stack. Since the condition
  of being an $n$-sheaf is local in the fppf topology, descent is also effective
  for $n$-sheaves. Thus, $n$-sheaves form a stack $\mSh_{X/S}^{(n)}$. Finally, the
  module structure gives a canonical identification of $\G_m$ with a central
  subgroup of the inertia, making $\mSh_{X/S}^{(n)}$ a {\simplespace} of level
  $\infty$, as desired.

  To see the last statement, note that we may first assume $S$ is affine, and
  that $X$ admits an fppf cover by a finite list of affines $U_1,\ldots,U_\ell$, each of finite
  presentation over $S$. Suppose given an $n$-sheaf $F$ on $X_T$. We wish to
  establish that the locus in $T$ over which $F$ is perfect is open, and to do
  this we may work locally on $T$ and thus assume that $T$ is affine. By
  standard constuctibility results, this then readily reduces to the case in
  which $X$ itself is affine. For arguments of this kind, the reader is referred
  to \cite{MR0354655}, \cite{MR2177199}, or the beautiful appendix to
  \cite{MR1106918}.
\end{proof}

Write $\mSh_{X/S}^{(n)}(r)$ for the substack of sheaves of constant rank
$r\in\N$. Fix an invertible sheaf $L$ on $X$.

\begin{defn}
  The stack of perfect $n$-sheaves with determinant $L$ is the stack of pairs
  $(F,\phi)$, where $F$ is a perfect $n$-sheaf and $\phi:\det(F)\to L$ is an
  isomorphism. We will write it as $\mSh_{X/S}^{(n)\perf}(r,L).$
\end{defn}

\begin{lem}
  The stack $\mSh_{X/S}^{(n)\perf}(r,L)$ is naturally a {\simplespace} of level $r$.
\end{lem}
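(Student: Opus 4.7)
The plan is to exhibit a canonical central monomorphism $i : \m_r \to \Inertia(\mSh_{X/S}^{(n)\perf}(r,L)/S)$ arising from scalar multiplication. For any family $(F,\phi)$ parametrized by $T \to S$ and any section $\alpha \in \m_r(T)$, multiplication by $\alpha$ on $F$ defines an $\ms O_{X_T}$-linear automorphism of $F$ as an $n$-sheaf, and since scalar multiplication fixes the underlying morphisms to $X$ and to $S$, it lies in the relative inertia over $S$. This assignment is manifestly functorial in $T$ and in morphisms of families (which are forced to be isomorphisms commuting with the $\ms O$-action), so once we verify compatibility with $\phi$ we obtain the morphism $i$.

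The crucial step is to check that $\alpha\cdot\id_F$ preserves the determinant rigidification $\phi : \det(F) \risom L_T$. Here one invokes the standard scaling property of the Knudsen--Mumford determinant \cite{MR0437541}: for a perfect sheaf $F$ of rank $r$, multiplication by $\alpha$ on $F$ induces multiplication by $\alpha^r$ on $\det(F)$. When $F$ is locally free this is the familiar identity for the top exterior power; in general one lifts $\alpha\cdot\id_F$ to the termwise scalar action on a finite free resolution $E_\bullet \to F$ and uses additivity of $\det$ in exact sequences to compute the induced action on $\det F$ as $\prod_i \alpha^{(-1)^i \rk E_i} = \alpha^r$. Since $\alpha^r = 1$, the rigidification $\phi$ is preserved, so $i(\alpha)$ lands in $\Aut(F,\phi)$.

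It remains to verify that $i$ is a central monomorphism. Centrality is immediate: every automorphism of the pair $(F,\phi)$ is $\ms O_{X_T}$-linear, hence commutes with the scalar action of $\m_r$. For injectivity, note that $\det(F) \cong L_T$ is invertible (hence nonzero), so $F$ itself has nonzero stalks at every geometric point of $T$; working locally where $F$ is locally free of rank $r$ on a dense open of $X_T$, scalar multiplication by $\alpha$ is multiplication by $\alpha$ on a nonzero free module, and this is the identity only when $\alpha = 1$. Combined with the level-$\infty$ structure on the ambient $\mSh_{X/S}^{(n)}$, this $i$ factors the inclusion $\m_r \hookrightarrow \G_m$ through the inertia, exhibiting $\mSh_{X/S}^{(n)\perf}(r,L)$ as a {\simplespace} of level $r$.

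The main obstacle is really only the determinant identity $\det(\alpha\cdot\id_F) = \alpha^r\cdot\id_{\det F}$ for perfect, not merely locally free, sheaves; once that compatibility is established, the remaining verifications (centrality, injectivity, functoriality) are formal. The identity itself is standard but requires a careful unwinding of the Knudsen--Mumford construction and its multiplicativity in distinguished triangles.
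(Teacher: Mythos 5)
Your proof is correct and follows essentially the same route as the paper's (one-line) argument: the {\simplespace} structure comes from scalar multiplication, and compatibility with the determinant identification $\phi:\det(F)\risom L_T$ cuts the scalars down from $\G_m$ to $\m_r$ via $\det(\alpha\cdot\id_F)=\alpha^r$. Your verification of the $\alpha^r$ scaling for perfect (not just locally free) sheaves via a free resolution, and the centrality/injectivity checks, simply spell out details the paper leaves implicit.
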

\begin{proof}
  The {\simplespace} structure arises from scalar multiplications as before, but since
  they have to commute with the identification of the determinant with $L$, only
  multiplications by sections of $\m_r$ give automorphisms.
\end{proof}

One of the great games of the moduli theory is to use the universal sheaf $F$ on
$X\times\mSh_{X/S}$ to push information between the two {\simplespace}s. We will
see examples of this later.

This theory is heavily driven by examples. We describe some of the basic
examples in the following sections using the language we have been developing here.

\subsection{Example: Almost Hilbert}
\label{sec:ex-hilb}

As students, we all learn to compute the
``Hilbert scheme of one point'' on $X/S$ and to show that it is $X$ itself (when
$X$ is separated!). After having done this, one 
invariably wonders what would happen were one to consider the ``Almost
Hilbert'' scheme of one point: colloquially, the scheme parametrizing sheaves on
$X$ of rank $1$ on closed points. We briefly investigate this
construction for merbes in this section as a first example.

\begin{assumption}\label{ass:almost-hilb}
  We assume throughout this section that $X\to S$ is an
  algebraic merbe that is locally of finite presentation whose rigidification $\widebar
  X\to S$ is a separated algebraic space.
\end{assumption}

\begin{defn}
  Given a merbe $X\to S$ satisfying Assumption \ref{ass:almost-hilb}, the
  \emph{almost Hilbert scheme of $X/S$\/}, denoted $\AHilb_{X/S}$ is the stack
  of families of $1$-sheaves on $X/S$ satisfying the following condition:
  for any family $F$ on $X\times_S T$ parametrized by $T$, we have that $F$ is
  an invertible $1$-sheaf supported on a closed sub{\simplespace} $Z\subset X$
  whose rigidification $\widebar Z\subset\widebar X$ is a section of the projection map
  $\widebar X\times_S T\to T$.
\end{defn}

\begin{prop}
  Given a merbe $X\to S$ satisfying Assumption \ref{ass:almost-hilb}, the stack
  $\AHilb_{X/S}$ is naturally a merbe of level $\infty$ and $$\AHilb_{X/S}\cong\Can_{\lambda(X)|\infty}(X).$$
\end{prop}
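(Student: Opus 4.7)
Let $m = \lambda(X)$ and write $\mathcal X$ for $\Can_{m|\infty}(X)$. The plan is to exhibit a natural equivalence between the functors of points of $\AHilb_{X/S}$ and $\mathcal X$, and to check it respects the canonical level-$\infty$ merbe structures.

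First I would unpack $T$-points of $\AHilb_{X/S}$. Such a point is a pair $(Z,F)$ with $Z \subset X \times_S T$ a closed sub-merbe whose rigidification $\widebar Z \subset \widebar X \times_S T$ is the graph of a morphism $s \colon T \to \widebar X$ over $S$, and $F$ an invertible $1$-sheaf on $Z$; the proper-support condition is automatic since $G(m) = \m_m$ is finite. Because $X \to \widebar X$ is a $G(m)$-gerbe and $Z \to \widebar Z$ must itself be a $G(m)$-gerbe sitting inside $X \times_S T \to \widebar X \times_S T$, $Z$ is forced to be the pullback $s^* X$. So such a $T$-point is equivalent to a pair $(s, F)$ with $s\colon T \to \widebar X$ an $S$-morphism and $F$ an invertible $1$-sheaf on the $G(m)$-gerbe $s^* X \to T$. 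A $T$-point of $\mathcal X$ is the same as $s\colon T \to \widebar X$ together with an object of the pulled-back $\G_m$-gerbe $s^* \mathcal X \to T$.

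The crux is then to identify, naturally in $s$, the $\G_m$-gerbe $s^* \mathcal X \to T$ with the $\G_m$-gerbe over $T$ classifying invertible $1$-sheaves on $s^* X$. This is essentially the defining property of $\mathcal X$ as the pushout of the $G(m)$-gerbe $X \to \widebar X$ along $G(m) \hookrightarrow \G_m$ built in the proof of Lemma \ref{lem:cocart}. I would verify it locally on $\widebar X$, where $X$ splits as $\widebar X \times \B G(m)$ and $\mathcal X$ as $\widebar X \times \B \G_m$: an invertible $1$-sheaf on $T \times \B G(m)$ is a line bundle $L$ on $T$ with its tautological $G(m)$-action through $G(m) \hookrightarrow \G_m$, which is precisely the datum of a $T$-point of $\B \G_m$. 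This local description glues to a global equivalence of stacks over $\widebar X$.

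Combining these two unpackings yields a natural equivalence $\AHilb_{X/S}(T) \simeq \mathcal X(T)$. Both sides carry a tautological $\G_m$ in their inertia---scalar multiplication on $F$ on one side, the level subgroup of $\mathcal X$ on the other---and they correspond under the equivalence by construction, so it upgrades to an isomorphism of merbes of level $\infty$. I expect the main obstacle to be the local-to-global identification of $s^* \mathcal X$ with the gerbe of invertible $1$-sheaves on $s^* X$: it is conceptually straightforward but requires carefully tracking the correspondence between the canonical $1$-sheaf on the pushout gerbe $\mathcal X$ and the universal $1$-sheaf on $X$, either through an explicit cocycle computation or by invoking the universal property established in Lemma \ref{lem:cocart}.
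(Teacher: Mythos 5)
Your proposal takes essentially the same route as the paper: identify the support map $\AHilb_{X/S}\to\widebar X$, reduce to the fiber over a point $T\to\widebar X$, and identify that fiber with the associated $\G_m$-gerbe, i.e.\ the gerbe of invertible $1$-sheaves on the pulled-back $G(m)$-gerbe. The local-to-global identification you flag as the main obstacle is precisely the point the paper handles by citation (it is the standard fact that the $\G_m$-gerbe associated to $X\times_{\widebar X}T$ is the gerbe of invertible twisted sheaves, equivalently the universal property of Lemma \ref{lem:cocart}), so your argument is correct and matches the paper's.
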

\begin{proof}
  Scalar multiplication defines the natural merbe structure on $\AHilb_{X/S}$;
  the level is $\infty$. Sending a family to its support defines an $S$-morphism
  $\AHilb_{X/S}\to\widebar X$. Fix a section $\sigma:T\to\widebar X$. The fiber
  $\AHilb_{X/S}\times_{\widebar X}T\to T$ is the $\G_m$-gerbe of invertible
  $1$-sheaves on $X\times_{\widebar X}T$, which is itself a $G(X)$-gerbe over
  $T$. As shown in \cite{MR2717173}, the $\G_m$-gerbe associated to $X\times_{\widebar X}T$
  is precisely the gerbe of invertible $1$-sheaves. (In the classical language,
  this is the gerbe of invertible twisted sheaves.) Applying this to the
  universal point $\id:\widebar X\to\widebar X$ establishes the desired isomorphism.
\end{proof}

\subsection{Example: invertible $1$-sheaves on an elliptic merbe}
\label{sec:genus-1}
Fix an elliptic curve $E$ over a field $K$. There are many merbes of level
$\infty$ with rigidification $E$. By Giraud's theorem, they are parametrized by
$\H^2(E,\G_m)$, which, by the Leray spectral sequence for $\G_m$, fits into a split exact
sequence
$$0\to\H^2(\Spec K,\G_m)\to\H^2(E,\G_m)\to\H^1(\Spec K,\Pic_{E/K})=\H^1(\Spec K,
\Jac(E))\to 0$$
(the latter equality arising from the vanishing of $\H^1(\Spec K,\Z)$). 
Among other things, this sequence tells us that any Brauer class on $E$ has an
associated $\Jac(E)$-torsor. How can we identify it?

Given a class $\alpha\in\Br(E)$ such that $\alpha|_{\bf 0}=0\in\Br(K)$ (with
$\bf 0$
denoting the identity point of $E$), let $\mc E$ be an associated merbe (in
classical notation, a $\G_m$-gerbe associated to $\alpha$). Let $\mc M$ be the stack
of invertible $1$-sheaves on $\mc E$; $\mc M$ is itself a merbe of level
$\infty$. Tensoring with invertible $0$-sheaves on $E$ defines an action of
$\Pic_{E/K}$ on $\mc M$. The resulting action on $\widebar{\mc M}$ gives a
torsor, and this is precisely the image of the coboundary map. 

\subsection{Example: sheaves on a curve}
\label{sec:ex-curve}

In this section we reprise a small chunk of the moduli theory of sheaves on a
curve.

\begin{defn}
  A {\simplespace} $C\to S$ is a \emph{smooth proper curve-{\simplespace}\/} (among {\simplespace}s) if its rigidification $\widebar
  C\to S$ is a smooth proper relative curve with connected geometric fibers.
\end{defn}

We start with a smooth proper curve-{\simplespace} $C\to S$. Fix a
positive integer $n$ and an invertible $n$-sheaf $L$ on $C$. We will 
consider some of the structure of $\mc M:=\Sh^{(1)}_{C/S}(n,L)^{\textrm{lf}}$,
where $\textrm{lf}$ stands for the locus parametrizing locally free sheaves.

\begin{thm}\label{T:exampl-sheav-curve}
  The merbe $\mc M\to S$ has type $n$, and its rigidification $\widebar{\mc M}$
  is an ascending union of open subspaces that have integral rational geometric fibers
  over $S$.
\end{thm}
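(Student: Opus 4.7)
The plan is to adapt the argument of Proposition \ref{P:unirational-curve-classical} to the merbe setting, using Lemma \ref{lem:sheaves-simplespaces} and Lemma \ref{L:level-transfer} to reduce the key Bertini-type step to its classical incarnation on $\widebar C$.

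For the type claim, any scalar $\alpha\in\G_m$ acting on a locally free $1$-sheaf $F$ of rank $n$ acts on $\det F$ by $\alpha^n$, so the rigidification $\det F\simto L$ cuts the scalar-multiplication subsheaf of $\Inertia(\mc M/S)$ down to $\m_n$; hence $\mc M$ has level (equivalently, type) $n$.

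For the geometric assertion, the conclusion is local on $S$ in the \'etale topology, so I would work on a geometric fiber, where $\widebar C$ is a smooth proper curve over an algebraically closed field $k$. Since the Brauer group of such a curve vanishes, the associated $\G_m$-gerbe $\Can_{\lambda(C)|\infty}(C)\to\widebar C$ is trivial, and by Lemma \ref{L:level-transfer} an invertible $1$-sheaf $\chi$ on $C$ exists. Fixing an ample $0$-sheaf $\mc O(1)$ pulled back from $\widebar C$, and putting
$$M_m := L\otimes\chi^{-(n-1)}(m(n-1))$$
(an invertible $1$-sheaf), I would consider the functor
$$e_m : T \mapsto \Ext^1_{C_T}\bigl(\pr_1^\ast M_m,\;\pr_1^\ast\chi(-m)^{\oplus(n-1)}\bigr),$$
which by cohomology and base change is representable by an affine space $\A^N$ for $m\gg 0$. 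The universal extension
$$0\to\chi(-m)^{\oplus(n-1)}\to\mc F\to M_m\to 0$$
on $C\times\A^N$ consists of locally free $1$-sheaves of rank $n$ with a canonical identification $\det\mc F\simto L$ (by additivity of the determinant), yielding a morphism $\eps_m:\A^N\to\mc M$.

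It remains to show that, as $m$ varies over sufficiently large integers, the $\eps_m$ are smooth over a dense open of $\A^N$ and cover $\mc M$. Given $F\in\mc M$, the existence of a map $\chi(-m)^{\oplus(n-1)}\to F$ with invertible cokernel is, after tensoring with $\chi^{-1}(m)$, precisely the statement of Lemma \ref{L:bertini-sketch} applied to the rank-$n$ $0$-sheaf $F\otimes\chi^{-1}(m)$, which under Lemma \ref{lem:sheaves-simplespaces} corresponds to a locally free sheaf on $\widebar C$; smoothness of $\eps_m$ over the locus where the relevant $\H^1$ vanishes follows by the same vanishing argument as in the classical case. The main technical obstacle I foresee is bookkeeping the correspondence between $1$-sheaves on $C$ and $0$-sheaves on $\widebar C$ so as to transport the classical degree, rank, and $\H^1$-vanishing statements correctly into the twisted setting, but this is essentially formal. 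The images $\ms U_N := \eps_m(\A^N)\subset\widebar{\mc M}$ (reindexed by $N$) are then open substacks, integral as images of the integral scheme $\A^N$ and rational in the sense of being dominated by affine space, whose union exhausts $\widebar{\mc M}$.
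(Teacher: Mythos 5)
Your argument is correct and rests on the same two pillars as the paper's sketch -- Tsen's theorem to split the gerbe, and the classical Serre-extension/Bertini analysis of Proposition \ref{P:unirational-curve-classical} -- but it is organized differently. The paper splits $C$ over an fppf cover $S'\to S$ and then uses Lemma \ref{lem:sheaves-simplespaces} to transfer the \emph{whole} moduli problem at once, identifying $\Sh^{(1)}_{C/S}(L)\times_S S'$ with $\Can_{1|n}$ of a classical stack of sheaves on $\widebar C\times_S S'$, so that Proposition \ref{P:unirational-curve-classical} applies verbatim; you instead re-run the extension-space construction inside the twisted category on a geometric fiber, using the splitting $1$-sheaf $\chi$ as a stand-in for $\ms O$ and quoting only Lemma \ref{L:bertini-sketch} after untwisting by $\chi^{-1}(m)$. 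Your bookkeeping ($M_m$ is an invertible $1$-sheaf, $\det\mc F\simto L$, $e_m$ representable because $\chi^n\otimes L^{-1}(-mn)$ is a $0$-sheaf of large negative degree) is consistent, and the level-$n$ computation is the same as the paper's. The trade-off: the paper's route gets the open substacks and the splitting uniformly over a cover of $S$ in one stroke, which is what makes the ``ascending union of open subspaces over $S$'' assertion cleanest, whereas your fiberwise construction needs a word about descending or intrinsically defining the opens over $S$ (e.g.\ by $\H^1$-vanishing and global generation of $F\otimes\chi^{-1}(m)$, rather than taking images of $\eps_m$, which are only constructible a priori since $\eps_m$ is smooth only over a dense open); your route, on the other hand, makes the mechanism visible without invoking the stack-level equivalence. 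Finally, note that what both arguments actually deliver for the fibers of the opens is unirationality/rational connectedness (domination by affine space), exactly as in Proposition \ref{P:unirational-curve-classical}; that is all that is used downstream in Corollary \ref{C:exampl-sheav-curve}, so your hedge ``rational in the sense of being dominated by affine space'' is at the same level of precision as the paper's own statement.
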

\begin{proof}[Idea of proof]
  Tsen's theorem implies that there is an fppf cover $S'\to S$ such that
  $C\times_S S'$ is splittable. In this circumstance,
  by Lemma \ref{lem:sheaves-simplespaces}, there is an invertible sheaf $L'$ on
  $\widebar C\times_S S'$ and an isomorphism
  $$\Sh^{(1)}_{C/S}(L)\times_S S'\cong\Can_{1|n}(\Sh_{\widebar C\times_S
    S'/S'}(L')).$$
  We can then use the classical geometry of the stack of sheaves on a curve
  (described in Section \ref{sec:curve}) to
  deduce geometric properties of $\mc M\to S$.
\end{proof}

\begin{cor}\label{C:exampl-sheav-curve}
  If $C$ is a smooth proper curve-{\simplespace} over a finite extension $K$ of $k(t)$, with
  $k$ algebraically closed, and $L$ is an invertible $n$-sheaf with $n$ invertible
  in $K$, then
  $\mc M:=\Sh^{(1)}_{C/K}(n, L)^{\text{\rm lf}}$ contains an object over $K$. 
\end{cor}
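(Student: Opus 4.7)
My plan is to produce a $K$-rational point of the rigidification $\widebar{\mc M}$ via the Graber-Harris-Starr theorem (or its positive-characteristic refinement due to de Jong-Starr), then use Tsen's theorem to show the $\m_n$-gerbe $\mc M \to \widebar{\mc M}$ becomes trivial on that point, thereby lifting to a $K$-object of $\mc M$.

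By Theorem \ref{T:exampl-sheav-curve}, $\mc M$ is a merbe of level $n$ over $\Spec K$ whose rigidification $\widebar{\mc M}$ is an ascending union of geometrically integral rational open subspaces. Passing to a suitable projective model---most naturally, the twisted Gieseker semistable coarse moduli space, which is projective over $K$ and geometrically unirational via the twisted analogue of Theorem \ref{sec:sheaves-curve} together with the unirationality results of Lieblich \cite{MR2309155} recalled in Section \ref{sec:fm-res}---provides a proper $K$-scheme that is geometrically separably rationally connected. Since $K$ is the function field of a smooth projective curve over the algebraically closed field $k$, Graber-Harris-Starr (in characteristic zero) or de Jong-Starr (in positive characteristic) yields a $K$-rational point of this projective model; the density of $K$-points on proper separably rationally connected varieties over such function fields lets me arrange that this point lies in the stable locus, where the coarse-space point genuinely lifts to a $K$-point $p$ of $\widebar{\mc M}$.

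The fiber of the $\m_n$-gerbe $\mc M \to \widebar{\mc M}$ over $p$ is a $\m_n$-gerbe on $\Spec K$, classified by an element of $\H^2(\Spec K, \m_n)$. Because $n$ is invertible in $K$, the Kummer sequence identifies this group with $\Br(K)[n]$; and because $K$ is a finite extension of the $C_1$-field $k(t)$ (Tsen's theorem), $K$ itself is $C_1$, so $\Br(K) = 0$ and the gerbe is trivial. Any trivialization yields the desired $K$-object of $\mc M$. The principal obstacle lies in the first step: one must convert the \emph{geometric} rationality furnished by Theorem \ref{T:exampl-sheav-curve} into a genuine $K$-point of $\widebar{\mc M}$, which requires identifying a proper model of (a piece of) $\widebar{\mc M}$, verifying separable rational connectedness in positive characteristic, and invoking the correct version of Graber-Harris-Starr/de Jong-Starr. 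The Tsen-theoretic lifting that follows is essentially formal.
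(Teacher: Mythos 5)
Your proposal follows essentially the same route as the paper's own proof: Theorem \ref{T:exampl-sheav-curve} gives geometric (separable, since $n$ is invertible in $K$) rational connectedness of $\widebar{\mc M}$, the Graber--Harris--Starr/de Jong--Starr theorem produces a $K$-point of the rigidification, and Tsen's theorem kills the Brauer obstruction to lifting through the $\m_n$-gerbe $\mc M\to\widebar{\mc M}$. Your extra care about passing to a proper model and moving the point into the stable locus merely makes explicit a properness point that the paper's terse proof leaves implicit.
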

\begin{proof}
  By Theorem \ref{T:exampl-sheav-curve}, $\mc M$ is a merbe of level $n$ whose
  rigidification is geometrically rationally connected. Since $n$ is invertible
  in $K$, the fiber of $\mc M$ is separably rationally connected. By
  \cite{MR1981034}, the rigidification of $\mc M$ has a $K$-rational point. But
  the obstruction to lifting from $\widebar{\mc M}$ to $\mc M$ lies in $\Br(K)$,
  which vanishes by Tsen's theorem.  
\end{proof}

\subsection{Example: sheaves on a surface}
\label{sec:ex-K3}

It turns out that essentially all of the key properties enumerated in Section
\ref{sec:guide} of classical theory of sheaves on surfaces
carry over to sheaves on smooth proper surface-{\simplespace}s. This has some
interesting consequences we will describe later.

\begin{defn}
  A \emph{smooth proper surface-{\simplespace}} is a {\simplespace} $X\to S$
  whose rigidification is a relative proper smooth surface with connected
  geometric fibers.
\end{defn}

The general theory is described in \cite{MR2309155} (using the language of twisted sheaves).
For the purposes of certain applications, let me isolate a much less precise
theorem that suffices in applications.

\begin{thm}\label{T:exampl-sheav-surf}
  Suppose $X\to\Spec k$ is a smooth proper surface-{\simplespace} over a field,
  $n$ is a natural number invertible in $k$, and $L$ is an invertible $n$-sheaf
  on $X$. Then the stack $\mc M:=\Sh^{(1)}_{X/k}(n,L)$ contains a geometrically
  integral locally closed substack, if $\mc M$ is non-empty.
\end{thm}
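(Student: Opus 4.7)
The plan is to reduce to the geometric integrality statement for moduli of classical twisted sheaves on surfaces (the second result cited in Section~\ref{sec:fm-res}, from \cite{MR2309155}) via the level-transfer equivalence, and then to bootstrap the non-emptyness hypothesis up to a second Chern class at which that theorem applies.

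First, I would form $\mc X := \Can_{n|\infty}(X)$, the $\G_m$-gerbe over $\widebar X$ canonically associated to $X$. By Lemma~\ref{L:level-transfer}, pullback along $X \to \mc X$ induces an equivalence $\Sh^{(1)}(\mc X) \simto \Sh^{(1)}(X)$ that preserves rank and determinant, so $\mc M = \Sh^{(1)}_{X/k}(n,L)$ is identified with the moduli stack of classical twisted sheaves on $\mc X$ of rank $n$ and determinant $L$. The hypothesis that $n$ is invertible in $k$ ensures that the $\m_n$-gerbe $X \to \widebar X$ is étale and that this identification is clean at the level of moduli.

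Next, I would invoke the cited result: there exists $C_0$ such that for every $c \geq C_0$, the open substack $\mc N_c \subset \Sh^{(1)}_{\mc X/k}(n, L)$ of semistable twisted sheaves of second Chern class $c$ is geometrically integral (and lci with singular locus of high codimension). Since $c$ is a numerical invariant stable under base change, $\mc N_c$ is defined over $k$ and realizes as a locally closed substack of $\mc M$. It would then suffice to show that $\mc N_c \neq \emptyset$ for some $c \geq C_0$.

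For this last step, I would bootstrap from any sheaf $F_0$ witnessing $\mc M \neq \emptyset$. Standard elementary modifications---replacing $F_0$ by the kernel of a surjection onto a $1$-sheaf supported at a smooth closed point of $\widebar X$ over which the gerbe can be locally trivialized---produce sheaves of rank $n$ and determinant $L$ with $c_2$ increased by $1$. Iterating, and taking reflexive hulls as needed, yields sheaves of arbitrarily large second Chern class, and a genericity/Maruyama-type argument should ensure semistability once $c$ is large enough. The hard part will be this bootstrapping: carrying out the modifications faithfully in the twisted setting (so that the $1$-sheaf structure is preserved throughout) and producing \emph{semistable} sheaves (rather than merely torsion-free ones) at large $c$ is where the real technical work lies.
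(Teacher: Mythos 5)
Your proposal follows essentially the same route as the paper: the paper's own proof is exactly such a reduction to the twisted-sheaf language (your Lemma~\ref{L:level-transfer} step) followed by an appeal to the asymptotic results of \cite{MR2309155,MR3418522}, whose internal mechanism---the hierarchy indexed by second Chern class via reflexive hulls, with deformation theory forcing the components to coalesce---is precisely the ``hard part'' you flag, namely producing (semistable) sheaves at large $c_2$ starting from a single witness of non-emptiness. So your plan is sound and matches the paper's argument; the only difference is that you black-box the asymptotic integrality statement that the paper sketches from the inside.
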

\begin{proof}[Idea of proof]
  The proof is rather complex, but the basic idea is easy to convey.
  The reader is referred to
\cite{MR3418522} for a proof in the language of twisted
sheaves. Roughly speaking, the idea is to organize a hierarchy of
  locally closed substacks of $\mc M$ using the second Chern class, and to show
  that the substacks get nicer and nicer as the second Chern class grows. Using
  deformation theory, one shows that the set of components must shrink as the
  second Chern class grows, and each component must get closer to being smooth.
  Eventually, all of the components coalesce and a geometrically integral locally
  closed substack results. The hierarchy in question arises from the
  taking the reflexive hull of a sheaf $\ms F$, and the ``height'' in the
  hierarchy is determined by the length of $\ms F^{\vee\vee}/\ms F$;
  sheaves with greater length have higher second Chern class.
\end{proof}

\begin{remark}
  The basic idea sketched above is a recurring theme in the theory of
  moduli. A propitious choice of compactification of a moduli problem
  leads to a boundary that can be understood using associated moduli
  spaces of a lower level in some natural hierarchy, and playing one
  level of the hierarchy off another leads to limiting theorems. The
  Deligne-Mumford proof of irreducibility of $\ms M_g$ \cite{MR0262240} is 
  essentially built from this idea: the boundary of $\widebar{\ms
    M}_g$ is stratified by pieces made out of lower-genus moduli
  spaces, and their argument essentially works by a subtle
  induction. Similarly, O'Grady's proof of the asymptotic
  irreducibility of the moduli space of stable vector bundles with
  fixed Chern classes on a smooth projective surface \cite{MR1376250} works by
  the same basic outline as above, using reflexive hulls to relate
  boundary strata of the moduli space to the open part of moduli
  spaces lower in the hierarchy. Finally, the work of de Jong, He, and
  Starr on higher rational connectedness \cite{MR2854858} uses this same strategy. The
  hierarchy in that case is indexed by degrees of stable maps into a
  fibration $f:X\to C$, and the transition among strata is achieved by attaching
  vertical curves to the image (thus inserting the sections of a given
  degree into the boundary of the compactified space of higher-degree sections).
\end{remark}

\begin{cor}
  Suppose $k$ is PAC. Given a proper smooth surface-{\simplespace} $X\to\Spec k$, a natural number
  $n$ invertible in $k$, and an invertible $n$-sheaf $L$, there is an object of
  of $\Sh^{(1)}_{X/k}(n, L)$ over $k$, assuming it is non-empty.
\end{cor}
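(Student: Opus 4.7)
The strategy is to combine Theorem \ref{T:exampl-sheav-surf} with the PAC hypothesis: the theorem manufactures geometrically integral locally closed data inside the moduli, and PAC produces $k$-rational points on it. Concretely, I would apply Theorem \ref{T:exampl-sheav-surf} to $\mc M := \Sh^{(1)}_{X/k}(n,L)$ (nonempty by hypothesis) to obtain a geometrically integral locally closed substack $\mc N \subset \mc M$. Inheriting its merbe structure from $\mc M$, the substack $\mc N$ is a merbe of level $n$, and its rigidification $\widebar{\mc N}$ is a geometrically integral, locally closed, locally-finite-type $k$-algebraic space; after further shrinking, I may assume $\widebar{\mc N}$ is a geometrically integral $k$-scheme of finite type.

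Next, since $k$ is PAC, the scheme $\widebar{\mc N}$ acquires a $k$-rational point $p$. Pulling the $\m_n$-gerbe $\mc N\to\widebar{\mc N}$ back along $p$ yields a $\m_n$-gerbe over $\Spec k$ with class $\alpha_p \in \H^2(k,\m_n)$, and an object of $\mc N(k)$ lying over $p$ is precisely a trivialization of this gerbe. So the remaining task is to show $\alpha_p = 0$.

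The main obstacle is killing this Brauer-type obstruction, which is the analogue here of the appeal to Tsen's theorem made in Corollary \ref{C:exampl-sheav-curve}. Taking PAC in its strong form in which $\Gal(\widebar k/k)$ is projective --- so that $k$ has cohomological dimension at most $1$ and hence $\H^2(k,\m_n)=0$ (using $n$ invertible in $k$) --- the class $\alpha_p$ vanishes automatically. Absent that strengthening, one refines the rational-point step by choosing the PAC-produced point inside the open locus of $\widebar{\mc N}$ over which the universal Brauer obstruction $\omega \in \H^2(\widebar{\mc N},\m_n)$ classifying $\mc N\to\widebar{\mc N}$ specializes to zero; proving that this locus is nonempty and geometrically integral is where the delicate arithmetic input hides.
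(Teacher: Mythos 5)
Your proposal is correct and follows the intended route (the paper states this corollary without proof, as the direct analogue of Corollary \ref{C:exampl-sheav-curve}): Theorem \ref{T:exampl-sheav-surf} gives a geometrically integral locally closed substack, the PAC hypothesis gives a $k$-point of its rigidification, and the lifting obstruction in $\H^2(k,\m_n)$ vanishes. One remark: no ``strong form'' of PAC is needed, since by Ax's theorem every PAC field has projective absolute Galois group, hence cohomological dimension at most $1$ and $\H^2(k,\m_n)=0$ for $n$ invertible in $k$; consequently your fallback argument via ``the open locus where the universal obstruction specializes to zero'' is unnecessary --- and as stated it is also shaky, since the vanishing locus of the restricted gerbe class need not be open or nonempty.
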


One key example of a proper smooth surface-{\simplespace} comes from a
$\m_n$-gerbe over a root construction over a surface. As explained in
\cite{MR3418522}, the moduli space of $1$-sheaves on such {\simplespace}s are
key to controlling the period-index relation for Brauer classes over fields of
transcendence degree $2$ over a finite field.

\subsection{Example: sheaves on a K3 \simplespace}
\label{sec:K3-merbes}

While a lot more can be said for a general surface, the case of K3 surfaces is
a very interesting special case. The material in this section is drawn from
\cite{MR3215924} and \cite{1407.0592}. The latter contains a cleverer approach to
handling the numerical properties of the resulting theory, giving stronger
boundedness results (relevant in Section \ref{sec:derived}).

\begin{defn}
  A \emph{K3 {\simplespace}\/} is a proper smooth surface-{\simplespace} $X\to
  S$ such that each geometric fiber $X_s$ has rigidification isomorphic to a K3 surface.
\end{defn}

Fix a K3 {\simplespace} $X$ over a field $k$ and a prime $\ell$ that is
invertible in $k$. We assume for the sake of
simplicity that $\lambda(X)=m<\infty$ is invertible in $k$. In this case, since $X$ itself is smooth
and proper, the Chow theory $\Chow(X)$ and \'etale cohomology
$\H^\ast(X,\Z_\ell(i))$ behave well.

\begin{defn}
  The \emph{Mukai representation\/} attached to $X$ is the $\Gal_k$-module
  $$\H(X_{\widebar k},\Z_\ell):=\H^0(X_{\widebar k},\Q_\ell)\oplus\H^2(X_{\widebar k},
  \Q_\ell(1))\oplus\H^4(X_{\widebar k}, \Q_\ell(2)).$$
\end{defn}

There is a Galois-invariant quadratic form with values in $\Q_\ell$ coming
from the formula $(a,b,c)\cdot(a',b',c')=bb'-ac'-a'c$, with the products on the
right side arising from the usual cup product in \'etale cohomology.

Suppose that $X$ fits into a sequence of K3 {\simplespace}s $(X_n)$ with the following
properties.

\begin{enumerate}
\item $X_n$ has level $\ell^n$.
\item For each $n$, the {\simplespace} $X_n$ is isomorphic to
  $\Contr_{\ell^{n+1}/\ell^n}(X_{n+1})$ (in the notation of Definition \ref{def:contract}).
\end{enumerate}
For reasons that we will not go into here, such a sequence is the arithmetic
analogue of a $B$-field from mathematical physics.

In this case, for each $n$ one can find a distinguished lattice
$\Lambda_n\subset\H(X_n,\Q_\ell)$ together with an additive map $$v:K^{(1)}(X_n)\to\Lambda_n$$
such that for all $E,F\in K^{(1)}(X_n)$ we have $\chi(E,F)=-v(E)\cdot v(F)$.
Here, $K^{(1)}$ stands for the $K$-theory of $1$-sheaves. 
This is called the \emph{$\ell$-adic Mukai-Chow lattice of $X_n$\/}.

\begin{remark}
  It is an open question to understand the existence of the $\ell$-adic
  Mukai-Chow lattice in the absence of the full sequence $(X_n)$. This has
  numerical implications for the study of K3 {\simplespace}s over finite
  fields, and it is probably relevant to the existence of certain kinds of
  uniform bounds on the size of the transcendental quotient for Brauer groups of K3
  surfaces over finite fields and number fields.

  If $k$ is algebraically closed, then such a sequence always exists (for $\ell$
  invertible in $k$). 
\end{remark}

The fascinating part of the theory of K3 {\simplespace}s is that we can use the
Mukai-Chow lattice to manufacture new K3 {\simplespace}s. The key theorem is the
following. (The list of names is in roughly chronological order; first,
Mukai proved this for classical sheaves on K3 surfaces over $\C$, then Yoshioka
extended it to twisted sheaves on K3 surfaces over $\C$, and finally
Lieblich-Maulik-Snowden proved it for twisted sheaves on K3 surfaces over
arbitrary fields. For a
more detailed look at a larger class of moduli spaces, see \cite{1407.0592}.)

\begin{thm}[Mukai, Yoshioka, Lieblich-Maulik-Snowden]\label{T:cheese}
  Given $v\in\Lambda_n$ such that $\rk v=\ell^n$ and $v^2=0$ (in the lattice
  structure on $\Lambda_n$), the stack $M$ of stable $1$-sheaves $F$ on $X_n$ with
  $v(F)=v$ is a K3 {\simplespace} of level $\ell^n$. Moreover, the universal
  sheaf defines an equivalence of derived
  categories $$\D^{(-1)}(X_n)\cong\D^{(1)}(M).$$ Finally if there is some
  $u\in\Lambda_n$ such that $v\cdot u$ is relatively prime to $\ell$, then $M$
  is splittable.
\end{thm}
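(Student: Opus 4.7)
The plan is to reduce the structural claims to the classical theorems of Mukai and Yoshioka by trivializing $X_n$ over a separable closure of $k$, descend via Galois invariance of the universal sheaf, and treat the arithmetic splittability claim separately using the class $u$ together with the Fourier--Mukai equivalence produced along the way. Since algebraicity, smoothness, and properness of $M$ are all \'etale-local on $\Spec k$, I may assume $k$ separably closed for the first two parts of the argument; the final splittability conclusion is the only genuinely arithmetic piece.

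Over a separably closed $k$ we have $\H^2(\Spec k,\G_m)=0$, so the Leray sequence for $\widebar{X_n}\to\Spec k$ forces $X_n$ to be splittable. Choosing a trivialization $\chi$ of $\Can_{\ell^n|\infty}(X_n)$ and invoking Lemma \ref{lem:sheaves-simplespaces} gives an equivalence $F\mapsto F\otimes\chi^{-1}$ identifying $\Sh^{(1)}(X_n)$ with the classical category of quasi-coherent sheaves on the K3 surface $\widebar{X_n}$; under this equivalence $v$ corresponds to a classical Mukai vector $v'$ of rank $\ell^n$ with $v'^2=0$, and $M$ matches the classical stack of stable sheaves on $\widebar{X_n}$ with Mukai vector $v'$. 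Mukai's theorem then yields a smooth projective K3 coarse space of dimension $v^2+2=2$, with smoothness from the vanishing $\Ext^2(F,F)_0=0$ (K3 Serre duality plus simplicity of stable $F$) and properness from Langton's valuative criterion. The level-$\ell^n$ structure arises because fixing $v$ pins down $\det F$ (the K3 rigidification has trivial $\Pic^0$), and $\m_{\ell^n}\subset\G_m$ is precisely the subgroup of scalars acting trivially on $\det F$ when $\rk F=\ell^n$; Galois descent packages all of this over the original $k$.

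For the derived equivalence, the universal $1$-sheaf $\ms F$ on $X_n\times M$ is canonically a $(-1)$-sheaf in the $X_n$-variable and a $1$-sheaf in the $M$-variable, so the Fourier--Mukai transform $\Phi(-)=\R p_{M,*}(\ms F\ltensor p_{X_n}^{*}(-))$ is a functor $\D^{(-1)}(X_n)\to\D^{(1)}(M)$. To prove $\Phi$ is an equivalence, I would apply Bridgeland's criterion: images of skyscrapers form a spanning class of $\D^{(1)}(M)$, and the required Ext-orthogonalities between $\Phi(\kappa(x))$ and $\Phi(\kappa(x'))$ follow from the Mukai-pairing identity $\chi(E,F)=-v(E)\cdot v(F)$ combined with $v^2=0$ and Calabi--Yau duality on both sides. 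Since all ingredients are Galois-equivariant, the equivalence descends from $\widebar k$ to $k$.

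The splittability claim is the subtlest step and where I expect the main obstacle. Given $u\in\Lambda_n$ with $v\cdot u$ coprime to $\ell$, the very definition of the Mukai--Chow lattice as the image of $v\colon K^{(1)}(X_n)\to\Lambda_n$ produces a perfect $1$-complex $U$ on $X_n$ with $v(U)=u$. Applying the equivalence $\Phi$, the object $\Phi(U)\in\D^{(1)}(M)$ is perfect, and a Mukai-pairing computation identifies its rank modulo $\ell^n$ with $\pm v\cdot u$, hence coprime to $\ell$. The existence on the $\m_{\ell^n}$-gerbe $M\to\widebar M$ of a perfect $1$-complex of rank coprime to $\ell$ forces the index of the gerbe to be coprime to $\ell$; combined with the fact that the period divides $\ell^n$, standard period--index yoga yields index one, i.e., splittability. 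The main technical obstacle is ensuring that this rank computation is valid at the level of perfect complexes on a merbe and is compatible with the Galois action on $\Lambda_n$; this is the content of the Lieblich--Maulik--Snowden refinement and requires careful use of the compatibilities built into the tower $(X_n)$.
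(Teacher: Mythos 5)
The paper itself gives no proof of Theorem \ref{T:cheese}; it is imported from Mukai, Yoshioka \cite{MR2306170}, and Lieblich--Maulik--Snowden \cite{MR3215924}, so the comparison has to be with those arguments. Your proposal founders at its very first reduction: you claim that over a separably closed field $k$ the vanishing of $\H^2(\Spec k,\G_m)$ forces $X_n$ to be splittable, and hence that the whole problem reduces by Lemma \ref{lem:sheaves-simplespaces} and Galois descent to Mukai's untwisted theorem. That is false. The class of the gerbe $X_n\to\widebar{X_n}$ lives in $\H^2(\widebar{X_n},\G_m)$, not in $\Br(k)$, and the Brauer group of a K3 surface over a separably closed field is in general very far from zero (over $\C$ it is $(\Q/\Z)^{22-\rho}$; over $\widebar{\F}_p$ its finiteness is precisely the content of the Tate conjecture, which is what this whole section is aiming at). The Tsen-type vanishing you are implicitly invoking is exactly what makes the \emph{curve} case (Section \ref{sec:ex-curve}) easy and is unavailable for surfaces; indeed, if $X_n$ split over $\widebar k$ the sequence $(X_n)$ constructed from a divisible Brauer class would be trivial and the application to Tate would be vacuous. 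Consequently your first two parts prove nothing as written: non-emptiness of $M$, its smoothness and properness, and the identification of its rigidification as a K3 surface genuinely require the twisted-sheaf arguments (Yoshioka's deformation/hyperk\"ahler methods over $\C$, and the lifting-to-characteristic-zero comparison of Lieblich--Maulik--Snowden over general fields); none of it is obtained by descending the classical statement. Non-emptiness in particular is a substantial theorem in the twisted setting and is silently assumed in your sketch because you believe you are in the untwisted case.

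Two smaller points. The Fourier--Mukai kernel is a $1$-sheaf in \emph{both} variables (a $(1,1)$-sheaf); it is the $(-1)$-twisting of the source category, not of the kernel, that makes $\ms F\otimes p_{X_n}^\ast E$ a $0$-sheaf on the $X_n$-side so that pushforward lands in $\D^{(1)}(M)$ --- your bookkeeping there needs the sign fixed, though the Bridgeland-criterion outline is the standard one (with the usual caveats in characteristic $p$, handled in \cite{MR3215924}). Your final paragraph on splittability is essentially the right argument and matches the classical ``$v\cdot u$ coprime to the order'' criterion: a twisted perfect complex on $M$ of rank prime to $\ell$ forces the (generically computed, $\ell$-power) index of the gerbe $M\to\widebar M$ to be $1$, and triviality at the generic point gives triviality on the smooth surface. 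But as stated it leans on the derived equivalence and the lattice-theoretic rank computation from the earlier parts, so it cannot stand until the genuinely twisted core of the theorem is established by other means.
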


In other words, the numerical properties of the lattice $\Lambda_n$ can produce
equivalences of derived categories between various kinds of K3 {\simplespace}s,
and can also ensure that one side of each such equivalence is splittable. As we
will see below, this is directly relevant to the Tate conjecture for K3
surfaces. In this connection, we recall the following theorem.

\begin{thm}[Huybrechts-Stellari, \cite{MR2179782}]\label{T:yogurt}
  Give a K3 {\simplespace} $X$ over $\C$ of level $\infty$, there are only finitely many K3 {\simplespace}s
  $Y$ of level $\infty$ such that $\D^{(1)}(X)\cong\D^{(1)}(Y)$. 
\end{thm}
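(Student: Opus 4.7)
The plan is to imitate the Bridgeland-Maciocia finiteness argument for untwisted K3 surfaces, replacing the ordinary Mukai lattice with the twisted Mukai Hodge structure. The key invariant of a pair $(Y,\beta)$ up to twisted derived equivalence will be the twisted Mukai lattice $\widetilde H(Y,B_Y,\Z)$ together with its weight-two Hodge structure, where $B_Y$ is a rational $B$-field lift of $\beta$. Showing that this invariant takes only finitely many isomorphism classes among derived partners of a fixed $X$, and that only finitely many partners $(Y,\beta)$ realize each class, will yield the finiteness statement.

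First I would establish a representability result: every equivalence $\Phi:\D^{(1)}(X)\simto\D^{(1)}(Y)$ is induced by a twisted Fourier-Mukai kernel $\mc K$ on the product merbe $X\times Y$, sitting as a $(-1,1)$-sheaf so that the two gerbe structures are matched against one another. This is the twisted analogue of Orlov's representability theorem, established in this context by Canonaco-Stellari. Taking Mukai vectors of $\mc K$ and tracking Hodge structures then shows that $\Phi$ induces a Hodge isometry $\widetilde H(X,B_X,\Z)\simto\widetilde H(Y,B_Y,\Z)$ between signature $(4,20)$ integral Hodge structures of K3 type.

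Now the finiteness reduces to lattice-theoretic and geometric inputs. On the lattice side, a given Hodge-isometry class of Mukai Hodge structures determines the underlying transcendental lattice with its Hodge structure up to only finitely many choices, because the orthogonal group of the K3 Mukai lattice acts with finitely many orbits on the set of primitive embeddings of a sublattice of fixed isometry type, by Nikulin's theory. On the geometric side, for each such transcendental Hodge structure the surjectivity of the period map together with the global Torelli theorem for K3 surfaces produces only finitely many K3 surfaces $Y$ carrying it, and for each $Y$ the Brauer classes $\beta$ whose twisted Mukai Hodge structure matches a prescribed one form a finite set, because the ambiguity in the rational $B$-field lift of $\beta$ is controlled by a finite subquotient of $\Hom(T_Y,\Q/\Z)$.

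The main obstacle is really the first step: securing that every equivalence of twisted derived categories is Fourier-Mukai, and keeping the interaction between the gerbe structures on $X$ and $Y$ inside the product merbe bookkept correctly so that the kernel lives in a canonical twisted derived category on $X\times Y$. Once this is in place, the Hodge- and lattice-theoretic finiteness inputs are essentially standard applications of Nikulin's primitive embedding theory, though the twisted setting demands care in choosing compatible $B$-field lifts on both sides of the equivalence and in verifying that the resulting Hodge isometry respects them up to the integral lattice ambiguity inherent in the choice of lift.
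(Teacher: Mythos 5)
The paper does not actually prove this statement: Theorem \ref{T:yogurt} is imported from Huybrechts--Stellari \cite{MR2179782}, where it is phrased for pairs $(Y,\beta)$ of a K3 surface and a Brauer class --- which is exactly what a K3 merbe of level $\infty$ over $\C$ amounts to, since $\H^2(Y,\G_m)$ is torsion for a smooth complex projective surface. Your outline is essentially a reconstruction of the argument of that cited paper, supplemented (correctly) by Canonaco--Stellari's twisted representability theorem so that arbitrary equivalences, not just those of Fourier--Mukai type, induce Hodge isometries of twisted Mukai lattices. So the overall strategy is the right one, and it is the one the literature uses.

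There is, however, one concrete soft spot in your finiteness step. For a partner $(Y,\beta)$ the twisted transcendental lattice $T(Y,\beta)=\ker\bigl(\beta\colon T(Y)\to\Q/\Z\bigr)$ sits inside $T(Y)$ with index $\mathrm{ord}(\beta)$, and a priori $\mathrm{ord}(\beta)$ ranges over all positive integers as $(Y,\beta)$ varies; Nikulin-type finiteness of primitive embeddings of a \emph{fixed} lattice, plus your ``finite subquotient of $\Hom(T_Y,\Q/\Z)$'' remark, do not by themselves bound the number of possible overlattices $T(Y)\supset T(Y,\beta)$ or the number of classes $\beta$. The missing ingredient is the discriminant comparison $\operatorname{disc} T(Y,\beta)=\mathrm{ord}(\beta)^2\cdot\operatorname{disc} T(Y)$: since the Hodge isometry fixes the isometry class of $T(Y,\beta)$, this bounds $\mathrm{ord}(\beta)$, hence bounds the index and discriminant of $T(Y)$, and only then do you get finitely many isometry classes of $T(Y)$, finitely many surfaces $Y$ (via primitive embeddings into the K3 lattice, Torelli, and finiteness of chambers modulo automorphisms), and finitely many $\beta$ on each $Y$. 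Relatedly, you should note explicitly that changing the rational $B$-field lift by an integral class changes the twisted Hodge structure by the integral isometry $\exp$ of that class, so the isomorphism class of $\widetilde H(Y,B_Y,\Z)$ depends only on $\beta$; with these two points supplied, your proposal is a faithful outline of the Huybrechts--Stellari proof.
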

\begin{remark}
  By dualizing $X_n$ over its rigidification, the difference between $\D^{(1)}$
  and $\D^{(-1)}$ becomes unimportant. The restriction to level $\infty$ is
  necessary because passing from a {\simplespace} $Z$ to 
  $\Can_{\lambda(Z)|\lambda(Z)m}(Z)$ for any $m$ always induces an equivalence
  of abelian categories of $1$-sheaves, hence an equivalence of derived
  categories. So counting statements for derived equivalences should only be
  evaluated at level $\infty$.
\end{remark}

\section{Case studies}
\label{sec:case-studies}

We conclude this tour with a couple of almost immediate consequences of the
theory described above to two problems: the period-index problem for the Brauer
group and the Tate conjecture for K3 surfaces.

\subsection{Period-index results}
\label{sec:per-ind}

Let $K$ be a field. Given a Brauer class $\alpha\in\Br(K)$, there are two
natural numbers one can produce: the \emph{period\/} of $\alpha$, denoted
$\per(\alpha)$, and the \emph{index\/} of $\alpha$, denoted $\ind(\alpha)$.
Using basic Galois cohomology, one can show that $\per(\alpha)|\ind(\alpha)$ and
that both numbers have the set of prime factors. (Note: see \cite{MR3299728} for
the more interesting situation over a scheme larger than one point.) The basic
period-index problem is to determine how large $e$ must be in order to ensure
$\ind(\alpha)|\per(\alpha)^e$. A specific form of the basic question is due to Colliot-Th\'el\`ene.

\begin{question}[Colliot-Th\'el\`ene]\label{Q:ct}
  Suppose $K$ is a $C_d$-field. Is it always true that $\ind(\alpha)|\per(\alpha)^{d-1}$?
\end{question}

The key connection between this question and the theory developed here is the
following: given a Brauer class $\alpha$ over a field $K$, there is an
associated $\G_m$-gerbe $G\to\Spec K$. The index of $\alpha$ divides a number
$N$ if and only if there is a $1$-sheaf of rank $N$ on $G$. When $K$ is the
function field of a reasonable scheme $X$, the $\G_m$-gerbe $G$ extends to a
reasonable {\simplespace} closely related to $X$. By studying the stack of
$1$-sheaves, we get a new {\simplespace} whose rational-point properties are
closely bound to Question \ref{Q:ct}. (This is all explained in \cite{MR2388554}.)

For global function fields an affirmative answer is given by the
Albert-Brauer-Hasse-Noether theorem (see \cite{MR1501659} and  
\cite{MR1581351} for the classical references).
For function fields of surfaces over algebraically
closed fields, the conjecture was proven by de Jong \cite{MR2060023},
using the deformation theory of Azumaya algebras. There is also a
simple proof using the results of Section
\ref{sec:ex-curve}: fiber the surface $X$ over $\P^1$ and consider the generic
fiber. The Brauer class gives a curve-{\simplespace} $C\to\Spec k(t)$, and the associated
{\simplespace} of $1$-sheaves $M$ is filled with unirational opens. By Corollary
\ref{C:exampl-sheav-curve}, $M$ contains an object.

The first non-trivial class of
$C_3$-fields for which we know the answer is function fields of surfaces over
finite fields. It is an immediate consequence of the theory developed here.

\begin{thm}[Lieblich, \cite{MR3418522}]
  If $K$ has transcendence degree $2$ over a finite field, then
  $$\ind(\alpha)|\per(\alpha)^2$$ for all $\alpha\in\Br(K)$.
\end{thm}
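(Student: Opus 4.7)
The plan is to realize $K$ as the function field of a smooth proper surface-{\simplespace} of level $n = \per(\alpha)$ over the finite base field $k$, extract a $1$-sheaf from a geometrically integral open of the associated moduli {\simplespace} via Theorem~\ref{T:exampl-sheav-surf}, and read off the index bound from the degrees appearing in a Lang--Weil point count on that open.

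Write $n = \per(\alpha)$. First I would construct a smooth proper surface-{\simplespace} $\mc X \to \Spec k$ of level $n$ whose associated $\G_m$-gerbe pulls back to $\alpha$ along $\Spec K \hookrightarrow \widebar{\mc X}$: begin with any smooth projective surface $X_0/k$ with function field $K$, blow up $X_0$ so that the ramification locus of $\alpha$ is a strict normal crossings divisor, take a $\m_n$-root stack along that divisor so that $\alpha$ becomes unramified on the resulting Deligne--Mumford stack, and globalize the now unramified class by a $\G_m$-gerbe to produce $\mc X$. Next fix an invertible $n$-sheaf $L$ on $\mc X$ and consider the moduli {\simplespace} $\mc M := \Sh^{(1)}_{\mc X/k}(n, L)$. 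Because de~Jong's theorem gives $\ind = \per = n$ for Brauer classes over transcendence-degree-$2$ extensions of $\widebar k$, the stack $\mc M_{\widebar k}$ is nonempty; hence $\mc M$ is nonempty as a stack over $k$. Theorem~\ref{T:exampl-sheav-surf} then furnishes a geometrically integral locally closed substack $\mc U \subset \mc M$.

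Let $\widebar{\mc U}$ denote the rigidification of $\mc U$, a geometrically integral algebraic space of finite type over the finite field $k$. By Lang--Weil, $\widebar{\mc U}$ has closed points of every sufficiently large degree over $k$, so we may choose a closed point $p$ of some degree $d$ coprime to $n$. The obstruction to lifting $p$ from $\widebar{\mc U}$ to $\mc U$ lies in $\Br(k(p)) = \Q/\Z$ and is annihilated by $n$ because $\mc U \to \widebar{\mc U}$ is a $\m_n$-gerbe, so after a further extension $k'/k(p)$ of degree $e \mid n$ we obtain a $1$-sheaf $F$ on $\mc X_{k'}$ of rank $n$ with determinant $L|_{k'}$. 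Pushing $F$ forward along the finite flat map $\mc X_{k'} \to \mc X$ and restricting to the generic point yields a $1$-sheaf of rank $nde$ on the gerbe representing $\alpha$, so $\ind(\alpha) \mid nde$. Since $\ind(\alpha)$ has the same prime divisors as $n$ and $\gcd(d, n) = 1$, the index is coprime to $d$, forcing $\ind(\alpha) \mid ne \mid n^2 = \per(\alpha)^2$.

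The main obstacle I expect is the first step: producing the integral model $\mc X$ as a smooth proper surface-{\simplespace} of level exactly $n$ over $k$ that globally realizes $\alpha$ requires a delicate combination of blow-ups, root stack constructions along ramification components, and a final $\G_m$-gerbe refinement, all performed carefully enough that the resulting moduli stack $\Sh^{(1)}_{\mc X/k}(n,L)$ has the structure needed to apply Theorem~\ref{T:exampl-sheav-surf}. Once $\mc X$ is in hand, the geometric integrality result together with Lang--Weil and the coprime-degree trick above deliver the $\per^2$ bound mechanically.
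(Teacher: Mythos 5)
Your skeleton is the paper's: build a surface-{\simplespace} model of $\alpha$ over $k$ via blow-ups and an $n$-th root stack, invoke Theorem~\ref{T:exampl-sheav-surf} to get a geometrically integral locally closed substack of the moduli {\simplespace} of $1$-sheaves, and finish with Lang--Weil plus a prime-to-$n$ degree trick. But there is a genuine gap at the crucial step, and it is exactly the step the paper flags: you take the moduli stack of $1$-sheaves of rank $n=\per(\alpha)$ and claim non-emptiness from de~Jong's theorem over $\widebar k$. De~Jong gives a rank-$\per$ twisted sheaf at the generic point of $X_{\widebar k}$ (and only if the period does not drop after base change to $\widebar k$, which it can), and extending it to the surface gives you \emph{some} determinant $D\in\Pic$ over $\widebar k$, adjustable only modulo $n$-th powers; there is no reason the class of $D$ in $\Pic/n\Pic$ is Galois-invariant, let alone realized by an $n$-sheaf $L$ defined over $k$, so the stack $\Sh^{(1)}_{\mc X/k}(n,L)$ with a determinant fixed over $k$ (and the discrete invariants needed for the geometric-integrality statement, which in \cite{MR3418522} also requires stability and large $c_2$) may well be empty for every admissible $L$. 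This is precisely why the paper's proof works with $1$-sheaves of rank $\per(\alpha)^2$ rather than $\per(\alpha)$: the parenthetical in the sketch says that ``a moderate amount of the work is devoted to non-emptiness, and is the explanation for the $\ell^2$.'' Non-emptiness at rank $\ell^2$ is arranged by an explicit construction (e.g.\ pushing a rank-$\ell$ twisted sheaf forward along a degree-$\ell$ cover), which has no analogue at rank $\ell$.

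The internal symptom that something is off is your lifting step: you place the obstruction to lifting a closed point from $\widebar{\mc U}$ to $\mc U$ in $\Br(k(p))=\Q/\Z$ and extract a factor $e\mid n$. But $k(p)$ is a finite field, so $\Br(k(p))=0$ (and $\H^2(k(p),\m_n)=0$), hence $e=1$ and lifting is automatic --- this is in fact what the paper uses tacitly when it says Lang--Weil produces an object of $M$ over a prime-to-$\ell$ extension. With $e=1$ your argument, if the non-emptiness claim were valid, would yield $\ind(\alpha)\mid nd$ with $\gcd(d,n)=1$, i.e.\ $\ind=\per$, a statement far stronger than the theorem and not obtainable by this method. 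So the exponent $2$ in your conclusion is coming from a fictitious Brauer obstruction over a finite field, whereas in the paper it comes from the rank $\per^2$ forced by the genuine non-emptiness problem. To repair the proposal, replace the rank-$n$ moduli problem by rank $n^2$ (after reducing to prime period), supply the non-emptiness construction, and drop the $\Br(k(p))$ step.
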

\begin{proof}[Idea of proof]
  Given a class $\alpha$ of period $\ell$, one associates a smooth proper geometrically
  connected surface-{\simplespace} $X$ of level $\ell$ over a finite field. The
  {\simplespace} $M$ of $1$-sheaves of rank $\ell^2$ on $X$ contains a geometrically integral locally
  closed substack by Theorem \ref{T:exampl-sheav-surf} (a moderate amount of the
  work is devoted to non-emptiness, and is the explanation for the $\ell^2$).
  The Lang-Weil estimates then imply that there is an object of $M$ over a field
  extension prime to $\ell$. Standard Galois cohomology then tells us that the
  desired divisibility relation holds.
\end{proof}

\subsection{The Tate conjecture for K3 surfaces}
\label{sec:derived}

We conclude with a few words on the Tate conjecture for K3 surfaces. This gives
a somewhat better illustration of how the symmetric nature of the theory of
{\simplespace}s lets mathematical information flow.

\begin{thm}[Ogus, Nygaard, Maulik, Madapusi-Pera, Charles]
  The Tate conjecture holds for K3 surfaces over a finite field $k$ of
  characteristic at least $5$.
\end{thm}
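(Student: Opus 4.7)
The plan is to use the Lieblich--Maulik--Snowden reformulation cited earlier, which reduces the Tate conjecture for K3 surfaces over finite extensions of $k$ to the statement that for each finite extension $L/k$, only finitely many K3 surfaces exist over $L$ up to isomorphism. So suppose for contradiction that there is an infinite sequence of pairwise non-isomorphic K3 surfaces $Y_1, Y_2, \ldots$ over some fixed $L$, and stratify this sequence according to Picard rank. The Picard rank of a K3 surface over $\widebar{L}$ is either at most $21$ (finite height) or equal to $22$ (supersingular), so at least one of these two strata must contain infinitely many of the $Y_i$.

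The non-supersingular case follows the Kuga--Satake strategy. Each finite-height K3 surface over $L$ lifts to characteristic zero over a finite extension of $W(L)$ by Deligne's theorem (this uses $\mathrm{char}\,L \geq 3$). After lifting, one associates a Kuga--Satake abelian variety whose Galois representation contains the transcendental part of the K3, and invokes the Tate conjecture for abelian varieties over finite fields (Tate--Zarhin) together with Faltings-style finiteness to bound the lifted K3's. Pushing back down, this forces finiteness of the finite-height stratum.

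The supersingular stratum is where the framework of this paper enters essentially. For $\mathrm{char}\,L \geq 5$, each supersingular K3 admits an $\ell$-adic Mukai--Chow lattice $\Lambda_n$ as in Section \ref{sec:K3-merbes}, together with a full $B$-field tower $(X_n)$ extending it. By Theorem \ref{T:cheese}, for any isotropic vector $v \in \Lambda_n$ of rank $\ell^n$, the moduli merbe $M$ of stable $1$-sheaves with Mukai vector $v$ is itself a K3 {\simplespace} of level $\ell^n$, derived equivalent to $X_n$ via the universal sheaf. By varying $v$ through a suitable orbit in $\Lambda_n$, one arranges the $Y_i$ (or twists thereof) to appear as such moduli spaces over a common K3 merbe $X$. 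One then lifts the pair $(X, \text{derived equivalence})$ to characteristic $0$ using crystalline deformation theory (again using $\mathrm{char}\,L \geq 5$ to guarantee the relevant obstructions vanish), and invokes the Huybrechts--Stellari finiteness Theorem \ref{T:yogurt} over $\C$ to produce only finitely many lifted derived partners, contradicting the assumed infinitude after specializing back.

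The main obstacle is the supersingular case, and more precisely the control of lifting: unlike finite-height K3's, supersingular K3 surfaces do not canonically lift to characteristic zero while preserving Picard lattice. The role of Theorem \ref{T:cheese} is precisely to circumvent this by replacing each $Y_i$ with a \emph{twisted} derived partner of a fixed surface, so that only one lift is required and the infinitude of the $Y_i$ is transported into an infinitude of derived equivalence classes on the complex side, where Huybrechts--Stellari applies. Making this transport rigorous requires a delicate comparison of $\ell$-adic Mukai--Chow lattices with their Betti counterparts under specialization, and verifying that the splittability criterion at the end of Theorem \ref{T:cheese} is satisfied often enough to keep the argument within the realm of honest (untwisted) K3 surfaces when necessary; the characteristic hypothesis $\mathrm{char}\,L \geq 5$ is consumed precisely in unobstructing these lifts.
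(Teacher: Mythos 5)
You correctly begin with the Lieblich--Maulik--Snowden reformulation, but your proof of the finiteness statement contains a genuine gap, and it inverts the logical role that the derived-category results play in the actual argument. Theorem \ref{T:yogurt} (Huybrechts--Stellari) bounds the number of twisted derived partners \emph{of one fixed surface}; to apply it you assert that, for an infinite family of pairwise non-isomorphic supersingular K3 surfaces $Y_i$ over $L$, one can ``arrange the $Y_i$ to appear as moduli spaces of stable $1$-sheaves over a common K3 merbe $X$'' by varying the Mukai vector. Nothing in Theorem \ref{T:cheese} or elsewhere provides this: an arbitrary infinite collection of K3 surfaces has no reason to consist of (twisted) Fourier--Mukai partners of a single fixed merbe, and producing such a common $X$ is essentially equivalent to the finiteness you are trying to establish. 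In the intended argument the machinery of Section \ref{sec:K3-merbes} runs in the opposite direction: one \emph{assumes} finiteness of K3 surfaces over each finite extension, supposes some fixed $X$ has infinite Brauer group, uses the tower $(X_n)$ and Theorem \ref{T:cheese} to manufacture infinitely many partners $M_n$ of members of a finite list, and then contradicts Theorem \ref{T:yogurt}; that is the implication ``finiteness $\Rightarrow$ Tate''. The finiteness statement itself is not proved this way at all: Charles proves it by a Zarhin-type trick for K3 surfaces, using moduli of stable twisted sheaves (with Yoshioka-type properties in characteristic $p$) to replace a given K3 by a partner carrying a polarization of bounded degree, together with integral canonical models of Shimura varieties and a relative Kuga--Satake map serving as a substitute for the period map.

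Your proposed lifting step also fails on its own terms: a supersingular K3 surface has geometric Picard rank $22$, its twisted moduli partners are again supersingular, and no such surface (nor the pair consisting of the merbe and the derived equivalence) lifts to characteristic $0$ in a way that would let you invoke Theorem \ref{T:yogurt} over $\C$; the ``little work'' alluded to in the paper concerns transporting the Huybrechts--Stellari finiteness from $\C$ to the relevant $\ell$-adic, finite-field setting, not lifting supersingular surfaces, and the hypothesis $\mathrm{char}\,k\geq 5$ is not consumed in unobstructing any such lift. (Your finite-height stratum is also incomplete as stated---finiteness of Kuga--Satake abelian varieties over a finite field does not bound the K3 surfaces without controlling polarization degrees and the fibers of the Kuga--Satake construction, which is precisely where the relative Kuga--Satake map over the integral model enters---but the supersingular step is the decisive gap.)
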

\begin{proof}[Idea of proof]
  There are now many proofs of this in various forms. We comment on the proof of
  Charles \cite{1407.0592}, which builds on ideas developed in \cite{MR3215924}
  that have already appeared in Section \ref{sec:K3-merbes}. The idea is this:
  first, the Tate conjecture for all K3 surfaces over all finite extensions of
  $k$ is equivalent to the statement that for each such extension there are only
  finitely many K3 surfaces. Second, one can verify the finiteness statement for
  K3 surfaces using an analogue of Zarhin's trick, familiar from abelian varieties.

  This equivalence arises almost directly from Theorem \ref{T:cheese} and
  Theorem \ref{T:yogurt}. We sketch the implication of Tate from finiteness:
  If $X$ has infinite Brauer group, one gets a sequence
  of K3 {\simplespace}s $(X_n)$ as in Section \ref{sec:K3-merbes}, and thus by
  Theorem \ref{T:cheese} one gets a sequence of K3 surfaces $M_n$ arising as
  rigidications of {\simplespace}s of stable $1$-sheaves on each $X_n$, together
  with equivalences $\D^{(-1)}(X_n)\cong\D(M_n)$. But Theorem \ref{T:yogurt} says
  that for a fixed K3 surface, there are only finitely many such partners up to
  isomorphism. (One must do a little work to get this down from $\C$ to a
  relevant statement for our sequence of equivalences, but it can be done.) If
  there are only finitely many K3 surfaces, then we end up with infinitely many
  partners for one of them. The conclusion is that no such sequence $(X_n)$ can exist, so the
  Brauer group must be finite.

  To prove finiteness, Charles used inspiration from complex geometry
  (birational boundedness statements for holomorphic symplectic varieties) to
prove a Zarhin-type statement for K3 surfaces using moduli spaces of stable
twisted
  sheaves, which have properties in characteristic $p$ very similar to those
  described by Yoshioka over $\C$ \cite{MR2306170}, as hinted at in Theorem
  \ref{T:yosh}. (The proof is quite subtle, as Charles can't simply port over
  complex techniques to characteristic $p$; rather he needs to use a tiny chunk
  of the theory of canonical integral models of Shimura varieties and a relative
  Kuga-Satake map to get an appropriate replacement for a particular period map.)
\end{proof}

\bibliography{article}

\begin{thebibliography}{10}

\bibitem{MR0354655}
{\em Th\'eorie des intersections et th\'eor\`eme de {R}iemann-{R}och}.
\newblock Lecture Notes in Mathematics, Vol. 225. Springer-Verlag, Berlin-New
  York, 1971.
\newblock S{\'e}minaire de G{\'e}om{\'e}trie Alg{\'e}brique du Bois-Marie
  1966--1967 (SGA 6), Dirig{\'e} par P. Berthelot, A. Grothendieck et L.
  Illusie. Avec la collaboration de D. Ferrand, J. P. Jouanolou, O. Jussila, S.
  Kleiman, M. Raynaud et J. P. Serre.

\bibitem{MR2007376}
Dan Abramovich, Alessio Corti, and Angelo Vistoli.
\newblock Twisted bundles and admissible covers.
\newblock {\em Comm. Algebra}, 31(8):3547--3618, 2003.
\newblock Special issue in honor of Steven L. Kleiman.

\bibitem{MR2450211}
Dan Abramovich, Tom Graber, and Angelo Vistoli.
\newblock Gromov-{W}itten theory of {D}eligne-{M}umford stacks.
\newblock {\em Amer. J. Math.}, 130(5):1337--1398, 2008.

\bibitem{MR2427954}
Dan Abramovich, Martin Olsson, and Angelo Vistoli.
\newblock Tame stacks in positive characteristic.
\newblock {\em Ann. Inst. Fourier (Grenoble)}, 58(4):1057--1091, 2008.

\bibitem{MR1501659}
A.~Adrian Albert and Helmut Hasse.
\newblock A determination of all normal division algebras over an algebraic
  number field.
\newblock {\em Trans. Amer. Math. Soc.}, 34(3):722--726, 1932.

\bibitem{MR3466552}
Benjamin Antieau.
\newblock A reconstruction theorem for abelian categories of twisted sheaves.
\newblock {\em J. Reine Angew. Math.}, 712:175--188, 2016.

\bibitem{MR3299728}
Benjamin Antieau and Ben Williams.
\newblock The prime divisors of the period and index of a {B}rauer class.
\newblock {\em J. Pure Appl. Algebra}, 219(6):2218--2224, 2015.

\bibitem{MR2353678}
Vikraman Balaji, Indranil Biswas, Ofer Gabber, and Donihakkalu~S. Nagaraj.
\newblock Brauer obstruction for a universal vector bundle.
\newblock {\em C. R. Math. Acad. Sci. Paris}, 345(5):265--268, 2007.

\bibitem{MR1045822}
Siegfried Bosch, Werner L\"utkebohmert, and Michel Raynaud.
\newblock {\em N\'eron models}, volume~21 of {\em Ergebnisse der Mathematik und
  ihrer Grenzgebiete (3) [Results in Mathematics and Related Areas (3)]}.
\newblock Springer-Verlag, Berlin, 1990.

\bibitem{MR1581351}
R.~Brauer, E.~Noether, and H.~Hasse.
\newblock Beweis eines {H}auptsatzes in der {T}heorie der {A}lgebren.
\newblock {\em J. Reine Angew. Math.}, 167:399--404, 1932.

\bibitem{MR1887894}
Andrei C{\u{a}}ld{\u{a}}raru.
\newblock Derived categories of twisted sheaves on elliptic threefolds.
\newblock {\em J. Reine Angew. Math.}, 544:161--179, 2002.

\bibitem{1407.0592}
Fran\c{c}ois Charles.
\newblock Birational boundedness for holomorphic symplectic varieties, zarhin's
  trick for $k3$ surfaces, and the tate conjecture, 2014.

\bibitem{MR2060023}
A.~J. de~Jong.
\newblock The period-index problem for the {B}rauer group of an algebraic
  surface.
\newblock {\em Duke Math. J.}, 123(1):71--94, 2004.

\bibitem{MR2854858}
A.~J. de~Jong, Xuhua He, and Jason~Michael Starr.
\newblock Families of rationally simply connected varieties over surfaces and
  torsors for semisimple groups.
\newblock {\em Publ. Math. Inst. Hautes \'Etudes Sci.}, (114):1--85, 2011.

\bibitem{MR1981034}
A.~J. de~Jong and J.~Starr.
\newblock Every rationally connected variety over the function field of a curve
  has a rational point.
\newblock {\em Amer. J. Math.}, 125(3):567--580, 2003.

\bibitem{MR0262240}
P.~Deligne and D.~Mumford.
\newblock The irreducibility of the space of curves of given genus.
\newblock {\em Inst. Hautes \'Etudes Sci. Publ. Math.}, (36):75--109, 1969.

\bibitem{MR0344253}
Jean Giraud.
\newblock {\em Cohomologie non ab\'elienne}.
\newblock Springer-Verlag, Berlin-New York, 1971.
\newblock Die Grundlehren der mathematischen Wissenschaften, Band 179.

\bibitem{MR2665168}
Daniel Huybrechts and Manfred Lehn.
\newblock {\em The geometry of moduli spaces of sheaves}.
\newblock Cambridge Mathematical Library. Cambridge University Press,
  Cambridge, second edition, 2010.

\bibitem{MR2179782}
Daniel Huybrechts and Paolo Stellari.
\newblock Equivalences of twisted {$K3$} surfaces.
\newblock {\em Math. Ann.}, 332(4):901--936, 2005.

\bibitem{MR0437541}
Finn~Faye Knudsen and David Mumford.
\newblock The projectivity of the moduli space of stable curves. {I}.
  {P}reliminaries on ``det'' and ``{D}iv''.
\newblock {\em Math. Scand.}, 39(1):19--55, 1976.

\bibitem{MR2428144}
Daniel Krashen and Max Lieblich.
\newblock Index reduction for {B}rauer classes via stable sheaves.
\newblock {\em Int. Math. Res. Not. IMRN}, (8):Art. ID rnn010, 31, 2008.

\bibitem{MR2717173}
Max Lieblich.
\newblock {\em Moduli of twisted sheaves and generalized {A}zumaya algebras}.
\newblock ProQuest LLC, Ann Arbor, MI, 2004.
\newblock Thesis (Ph.D.)--Massachusetts Institute of Technology.

\bibitem{MR2177199}
Max Lieblich.
\newblock Moduli of complexes on a proper morphism.
\newblock {\em J. Algebraic Geom.}, 15(1):175--206, 2006.

\bibitem{MR2309155}
Max Lieblich.
\newblock Moduli of twisted sheaves.
\newblock {\em Duke Math. J.}, 138(1):23--118, 2007.

\bibitem{MR2388554}
Max Lieblich.
\newblock Twisted sheaves and the period-index problem.
\newblock {\em Compos. Math.}, 144(1):1--31, 2008.

\bibitem{MR3418522}
Max Lieblich.
\newblock The period-index problem for fields of transcendence degree 2.
\newblock {\em Ann. of Math. (2)}, 182(2):391--427, 2015.

\bibitem{1507.08387}
Max Lieblich.
\newblock Rational curves in the moduli of supersingular k3 surfaces, 2015.

\bibitem{MR3215924}
Max Lieblich, Davesh Maulik, and Andrew Snowden.
\newblock Finiteness of {K}3 surfaces and the {T}ate conjecture.
\newblock {\em Ann. Sci. \'Ec. Norm. Sup\'er. (4)}, 47(2):285--308, 2014.

\bibitem{MR3429474}
Max Lieblich and Martin Olsson.
\newblock Fourier-{M}ukai partners of {K}3 surfaces in positive characteristic.
\newblock {\em Ann. Sci. \'Ec. Norm. Sup\'er. (4)}, 48(5):1001--1033, 2015.

\bibitem{MR3263156}
Max Lieblich, R.~Parimala, and V.~Suresh.
\newblock Colliot-{T}helene's conjecture and finiteness of {$u$}-invariants.
\newblock {\em Math. Ann.}, 360(1-2):1--22, 2014.

\bibitem{MR1304906}
D.~Mumford, J.~Fogarty, and F.~Kirwan.
\newblock {\em Geometric invariant theory}, volume~34 of {\em Ergebnisse der
  Mathematik und ihrer Grenzgebiete (2) [Results in Mathematics and Related
  Areas (2)]}.
\newblock Springer-Verlag, Berlin, third edition, 1994.

\bibitem{MR0184252}
M.~S. Narasimhan and C.~S. Seshadri.
\newblock Stable and unitary vector bundles on a compact {R}iemann surface.
\newblock {\em Ann. of Math. (2)}, 82:540--567, 1965.

\bibitem{MR1376250}
Kieran~G. O'Grady.
\newblock Moduli of vector bundles on projective surfaces: some basic results.
\newblock {\em Invent. Math.}, 123(1):141--207, 1996.

\bibitem{MR1106918}
R.~W. Thomason and Thomas Trobaugh.
\newblock Higher algebraic {$K$}-theory of schemes and of derived categories.
\newblock In {\em The {G}rothendieck {F}estschrift, {V}ol.\ {III}}, volume~88
  of {\em Progr. Math.}, pages 247--435. Birkh\"auser Boston, Boston, MA, 1990.

\bibitem{MR2306170}
K{\=o}ta Yoshioka.
\newblock Moduli spaces of twisted sheaves on a projective variety.
\newblock In {\em Moduli spaces and arithmetic geometry}, volume~45 of {\em
  Adv. Stud. Pure Math.}, pages 1--30. Math. Soc. Japan, Tokyo, 2006.

\end{thebibliography}
\bibliographystyle{plain}

\end{document}